\numberwithin{equation}{section}
\newcounter{cnstcnt}
\theoremstyle{plain}
\newtheorem{Th}{Theorem}[section]
\newtheorem{Lemma}[Th]{Lemma}
\newtheorem{Cor}[Th]{Corollary}
\newtheorem{Prop}[Th]{Proposition}
 \theoremstyle{definition}
\newtheorem{Conj}[Th]{Conjecture}
\newtheorem{?}[Th]{Problem}
\title{Convergence of cscK metrics on smooth minimal models of general type}
\author{Wanxing Liu}
\begin{document}

\begin{abstract}
    We consider constant scalar curvature K\"{a}hler metrics on a smooth minimal model of general type in a neighborhood of the canonical class, which is the perturbation of the canonical class by a fixed K\"{a}hler metric. We show that sequences of such metrics converge smoothly on compact subsets away from a subvariety to the singular K\"{a}hler Einstein metric in the canonical class. This confirms partially a conjecture of Jian-Shi-Song about the convergence behavior of such sequences.
\end{abstract}

\maketitle

\section{Introduction}
 Since the work of Calabi \cite{MR645743, MR780039} there has been much interest in the existence of constant scalar curvature K\"{a}hler (cscK) metrics. For the K\"{a}hler Einstein metric which is a special type of the cscK metric, Yau \cite{yau1978ricci} and Aubin \cite{MR494932} established independently the existence of such a metric on K\"{a}hler manifolds of negative first Chern class, and Yau  \cite{yau1978ricci} also showed it for manifolds of zero first Chern class. For manifolds of positive first Chern class, the Yau-Tian-Donaldson conjecture predicted that the existence of a K\"{a}hler Einstein metric is equivalent to the K-stability. Chen-Donaldson-Sun \cite{MR3264766, MR3264767,MR3264768} proved that the K-stability is sufficient for the existence of a K\"{a}hler Einstein metric (see also Tian \cite{MR3352459}), while the necessity was shown by Tian \cite{MR1471884}, Donaldson \cite{MR1916953}, Stoppa \cite{MR2518643} and Berman \cite{MR3461370}. 

 For the cscK metric, Donaldson \cite{MR2507220} considered the existence of cscK metrics on toric surfaces. Chen-Cheng \cite{1801.00656} proved that the properness of the Mabuchi functional is sufficient for the existence of a cscK metric building on the work of \cite{darvas2017mabuchi, MR3406499, MR3858468}. The necessity was proven by Berman-Darvas-Lu \cite{1602.03114}. We refer the interested reader to \cite{MR3076065, 1807.00928, MR3600039, MR2743451} for surveys and related developments of this area. Following the breakthrough made by Chen-Cheng, Jian-Shi-Song \cite{MR3981128} showed the following theorem.

\begin{Th} \label{existencetheorem}
Let $(M, \omega_0)$ be a compact K\"{a}hler manifold. If the canonical bundle $K_{M}$ is semi-ample, then for any $\varepsilon > 0$ small enough, there exists a unique cscK metric in the K\"{a}hler class  $-c_1(M) + \varepsilon [\omega_0] = c_1(K_{M}) + \varepsilon [\omega_0]$. 
\end{Th}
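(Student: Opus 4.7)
The plan is to apply the Chen-Cheng theorem \cite{1801.00656}, which characterizes the existence of cscK metrics in a Kähler class via properness of the Mabuchi K-energy (modulo the identity component of the automorphism group). The task therefore reduces to establishing properness of the Mabuchi functional $\mathcal{M}$ on the class $\chi_\varepsilon := c_1(K_M) + \varepsilon[\omega_0]$ for every sufficiently small $\varepsilon > 0$.

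First I would exploit the semi-ampleness of $K_M$: some power $|mK_M|$ is base-point free, producing a morphism $f : M \to \mathbb{P}^N$ whose pullback of the Fubini-Study form yields a smooth semi-positive representative $\chi \in c_1(K_M)$. Then $\chi_\varepsilon := \chi + \varepsilon \omega_0$ is a genuine Kähler reference metric in the target class. Using the Chen-Tian decomposition $\mathcal{M}(\varphi) = H(\varphi) + E_{\mathrm{Ric}}(\varphi)$, the entropy term $H(\omega_\varphi^n / \chi_\varepsilon^n)$ is non-negative, while the Ricci-type energy involves integration against $\mathrm{Ric}(\chi_\varepsilon)$, whose cohomology class equals $-c_1(K_M)$ and is thus represented by $-\chi$. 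This favorable sign lets one bound the Mabuchi functional below by a multiple of the $J$-functional $J_{\chi_\varepsilon}$, up to additive constants, yielding a coercivity estimate of the form $\mathcal{M}(\varphi) \geq \delta \, J_{\chi_\varepsilon}(\varphi) - C$ on normalized potentials.

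The principal obstacle is controlling the constants $\delta, C$ uniformly as $\varepsilon \to 0$, since $\chi_\varepsilon$ degenerates to the merely semi-positive form $\chi$ and ellipticity is lost in the limit. Handling this requires pluripotential input for degenerate Monge-Ampère equations in the spirit of Kolodziej and Eyssidieux-Guedj-Zeriahi, furnishing uniform $L^\infty$ bounds on potentials that do not blow up with $\varepsilon$. Uniqueness of the resulting cscK metric then follows from the strict convexity of $\mathcal{M}$ along weak geodesics (Berman-Berndtsson) combined with the fact, as in \cite{1602.03114}, that critical points are unique modulo the connected component of the automorphism group.
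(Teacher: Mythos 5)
This theorem is quoted in the paper from Jian-Shi-Song \cite{MR3981128}; the present paper records only that the proof rests on Chen-Cheng \cite{1801.00656} together with the properness criteria of Weinkove \cite{MR2226957}, Song-Weinkove \cite{MR2368374}, and Li-Shi-Yao \cite{MR3561959}. Your opening moves agree with that summary: use semi-ampleness to produce a semi-positive representative in $c_1(K_M)$ and reduce existence to coercivity of the Mabuchi functional via Chen-Cheng; the uniqueness remark via convexity along weak geodesics is also fine.

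The middle of your sketch contains both a genuine gap and a misdirected step. The theorem asserts existence for each fixed small $\varepsilon > 0$, so uniformity of the coercivity constants as $\varepsilon \to 0$, which you single out as the ``principal obstacle,'' is not required here at all; the pluripotential input you invoke (Kolodziej, Eyssidieux-Guedj-Zeriahi) plays no role in this theorem and instead belongs to the degenerate-limit analysis of Theorem \ref{wode} in the present paper. More importantly, the claim that ``this favorable sign lets one bound the Mabuchi functional below by a multiple of $J_{\chi_\varepsilon}$'' does not follow: a negative cohomology class for the Ricci twist is not by itself sufficient for coercivity of the twisted $J$-functional. The actual argument in \cite{MR3981128} verifies, for each small $\varepsilon$, the cone/nef-type criterion of Song-Weinkove and Li-Shi-Yao for properness of $J_{-\mathrm{Ric}}$ in the class $c_1(K_M) + \varepsilon[\omega_0]$ --- crucially using that the semi-positive form $\eta$ is dominated by $\omega_\varepsilon = \eta + \varepsilon \omega_0$ --- and then absorbs the entropy term via Tian's $\alpha$-invariant. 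Without invoking that specific criterion, your chain of inequalities stops short of establishing coercivity.
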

Recently, using different tools, Dyrefelt \cite{2004.02832} and Song \cite{2012.07956} strengthened this result to all compact K\"{a}hler manifolds with nef canonical bundle. We define the first Chern class $c_1(M)$ to be
\begin{equation}
    c_1(M) = [\mathrm{Ric}(\omega_0)] = -[\sqrt{-1} \partial \bar \partial \log \mathrm{det}(g_0)],
\end{equation}
where $g_0$ is the metric tensor of $\omega_0$. Notice that this differs from the usual definition of $c_1(M)$ by a factor of $2 \pi$. Theorem \ref{existencetheorem} is a generalization of Arezzo-Pacard's \cite{MR2275832} result on minimal surfaces of general type. The proof of this result is based on Chen-Cheng \cite{1801.00656} and the properness criterions developed by Weinkove \cite{MR2226957}, Song-Weinkove \cite{MR2368374} and Li-Shi-Yao \cite{MR3561959},
and Jian-Shi-Song also made the following conjecture.
\begin{Conj} \label{Conj}
Let $(M, \omega_0)$ be a compact K\"{a}hler manifold with semi-ample canonical bundle $K_{M}$. Then any sequence of cscK metrics in $-c_1(M) + \varepsilon [\omega_0]$ converges to the twisted K\"{a}hler-Einstein metric $g_{\text{can}}$ on the canonical model $M_{\text{can}}$ of $M$. The convergence should be both global in Gromov-Hausdorff topology and local in smooth topology away from the singular fibres of the canonical map $\Phi : M \to M_{\text{can}}$.
\end{Conj}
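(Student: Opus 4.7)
The plan is to address the local smooth convergence part of the conjecture in the setting of the paper, namely when $M$ is a smooth minimal model of general type so that $K_M$ is nef and big (hence semi-ample by basepoint freeness). In that setting the canonical map $\Phi : M \to M_{\mathrm{can}}$ contracts an exceptional subvariety $E \subset M$, and by Eyssidieux-Guedj-Zeriahi the class $-c_1(M)$ carries a unique singular K\"{a}hler-Einstein metric $\omega_{KE}$ which is smooth on $M \setminus E$ and satisfies $\mathrm{Ric}(\omega_{KE}) = -\omega_{KE}$ there. The target is to show that the cscK metrics $\omega_\varepsilon \in -c_1(M) + \varepsilon [\omega_0]$ produced by Theorem \ref{existencetheorem} converge to $\omega_{KE}$ in $C^\infty_{\mathrm{loc}}(M \setminus E)$.

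First I would reformulate the cscK condition \`a la Chen-Cheng. Fix a reference form $\chi \in -c_1(M)$ that is K\"{a}hler on $M \setminus E$ and write $\omega_\varepsilon = \chi + \varepsilon \omega_0 + \sqrt{-1}\,\partial \bar\partial \varphi_\varepsilon$ with $\sup_M \varphi_\varepsilon = 0$. The cscK equation then becomes the coupled system
\begin{equation*}
\omega_\varepsilon^n = e^{F_\varepsilon}\,\Omega, \qquad \Delta_{\omega_\varepsilon} F_\varepsilon = \mathrm{tr}_{\omega_\varepsilon}\bigl(\mathrm{Ric}(\Omega) - \chi - \varepsilon \omega_0\bigr) - \underline{S}_\varepsilon,
\end{equation*}
where $\Omega$ is a fixed smooth volume form with $[\mathrm{Ric}(\Omega)] = -c_1(M)$ and $\underline{S}_\varepsilon \to -n$. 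I would then establish a uniform global entropy bound $\int_M F_\varepsilon e^{F_\varepsilon} \Omega \le C$, using boundedness from below of the Mabuchi K-energy along the family via its value at $\omega_{KE}$ in the sense of Berman-Darvas-Lu.

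Next I would pass to local estimates on compact $K \Subset M \setminus E$. On $K$ the form $\chi + \varepsilon \omega_0$ is uniformly equivalent to a fixed K\"{a}hler metric, so Ko\l odziej-type pluripotential estimates combined with the global entropy bound yield $\|\varphi_\varepsilon\|_{L^\infty(K)} \le C_K$. The key remaining step is a local $L^\infty$ bound on the Monge-Amp\`ere density $F_\varepsilon$; I would obtain this by adapting the Chen-Cheng auxiliary-function argument with a cutoff supported on a slightly larger compact set, using the local $C^0$ control of $\varphi_\varepsilon$ and the linearized equation to close the maximum principle. Once $F_\varepsilon$ is bounded locally, a Yau-type Laplacian estimate gives a local $C^2$ bound for $\varphi_\varepsilon$, and Evans-Krylov plus Schauder bootstrap produce uniform $C^k$ bounds on $K$ for all $k$.

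Finally, any $C^\infty_{\mathrm{loc}}$ subsequential limit $\omega_\infty$ on $M \setminus E$ is K\"{a}hler there and satisfies $\mathrm{Ric}(\omega_\infty) = -\omega_\infty$ by passing to the limit in the two equations; extending trivially to a positive current in $-c_1(M)$ and invoking uniqueness of the singular K\"{a}hler-Einstein metric forces $\omega_\infty = \omega_{KE}$ and upgrades subsequential convergence to convergence of the full family. The principal obstacle I anticipate is precisely the local $L^\infty$ bound on $F_\varepsilon$: the standard Chen-Cheng argument is global and relies on a background K\"{a}hler metric with controlled geometry, whereas here $\chi + \varepsilon \omega_0$ degenerates transversally to $E$ and $\mathrm{tr}_{\omega_\varepsilon} \chi$ will blow up there, so the cutoff/auxiliary-function scheme has to be calibrated to absorb these blow-ups near $E$ without spoiling uniform constants on $K$.
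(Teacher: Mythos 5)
Your outline has the right overall shape -- Chen--Cheng style estimates for a degenerating background, an entropy bound, and identification of the limit with $\omega_{\mathrm{KE}}$ -- but several of the individual steps as stated would not go through, and they are exactly the points where the paper does something different.

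\textbf{The $C^0$ bound on $\varphi_\varepsilon$ does not follow from the entropy via Ko\l odziej.} You propose to get $\|\varphi_\varepsilon\|_{L^\infty(K)} \le C_K$ directly from ``Ko\l odziej-type pluripotential estimates combined with the global entropy bound.'' The entropy bound is an $L\log L$ control on $e^{F_\varepsilon}$, and this is \emph{not} enough for the $L^{\infty}$ estimate on the potential (the threshold for the complex Monge--Amp\`ere equation in dimension $n$ is roughly $L(\log L)^{n}$, and even then one needs a uniform family statement as $\omega_\varepsilon$ degenerates). The paper first proves a Chen--Cheng type auxiliary estimate (Lemma~\ref{maintool}: $F_\varepsilon + \varepsilon\rho_\varepsilon - (1+\varepsilon)\varphi_\varepsilon \le C$ with $C$ depending only on the entropy), using the ABP maximum principle, a cutoff, the $\alpha$-invariant of the \emph{fixed} class $[\eta+\omega_0]$, and the cancellation $-\underline{R_\varepsilon} - (1+\varepsilon)n \ge -C\varepsilon$ which is special to the big-and-nef situation. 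Only after this does it follow that $e^{F_\varepsilon} \in L^2(M,\omega_0^n)$ uniformly, and then the Tian--Zhang/Zhang degenerate Ko\l odziej theorem gives $\|\varphi_\varepsilon\|_{C^0(M)}\le C$ \emph{globally}. There is no purely local pluripotential route that avoids this global step.

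\textbf{The $C^0$ bound on $F_\varepsilon$ is global in the paper, and that globality matters.} You plan a local $L^\infty$ bound on $F_\varepsilon$ with a cutoff supported near $K$. The paper instead gets a global two-sided bound on $F_\varepsilon$: the lower bound comes from a maximum-principle argument on $F+\varphi$ that again exploits $-\underline{R_\varepsilon}+n = O(\varepsilon)$ together with $\varepsilon\,\mathrm{tr}_{\omega_\varphi}\omega_0$, and the upper bound comes from Lemma~\ref{maintool} once $\rho_\varepsilon$ is bounded. The paper explicitly flags (in comparing with Zheng's $L^1$ results) that these full non-degenerate $C^0$ estimates on $F_\varepsilon$ are what make the convergence proof work. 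A localized bound near $K$ would not suffice for the last step below.

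\textbf{``Passing to the limit in the two equations'' does not give the Einstein equation.} This is the most serious gap. Taking $\varepsilon\to 0$ in the scalar curvature equation yields $\Delta_{\omega_\infty}F_\infty = n - \mathrm{tr}_{\omega_\infty}\eta$ on $M\setminus E$, i.e.\ $F_\infty - \varphi_\infty$ is $\omega_\infty$-harmonic there; this says the limit has constant scalar curvature $-n$, not that it is K\"ahler--Einstein. Since $M\setminus E$ is noncompact and incomplete, a harmonic function need not be constant, so you cannot conclude $\partial\bar\partial F_\infty = \partial\bar\partial\varphi_\infty$. The paper's closing argument is an integration by parts \emph{over all of $M$}:
\begin{equation*}
\int_M |\partial(F_\varepsilon - \varphi_\varepsilon)|^2_{\omega_{\varphi_\varepsilon}}\,\omega_{\varphi_\varepsilon}^n
= (\underline{R_\varepsilon}+n)\int_M (F_\varepsilon - \varphi_\varepsilon)\,\omega_{\varphi_\varepsilon}^n - \varepsilon\int_M (F_\varepsilon - \varphi_\varepsilon)\,\mathrm{tr}_{\omega_{\varphi_\varepsilon}}\omega_0\,\omega_{\varphi_\varepsilon}^n \longrightarrow 0,
\end{equation*}
using the global $C^0$ bounds on $F_\varepsilon, \varphi_\varepsilon$ and the facts $\underline{R_\varepsilon}+n = O(\varepsilon)$, $\int_M \mathrm{tr}_{\omega_{\varphi_\varepsilon}}\omega_0\,\omega_{\varphi_\varepsilon}^n = n[\omega_\varepsilon]^{n-1}[\omega_0] = O(1)$. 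This forces $F_\infty - \varphi_\infty$ to be locally constant, and then $\mathrm{Ric}(\omega_\infty) = -\eta - \sqrt{-1}\partial\bar\partial F_\infty = -\omega_\infty$. Your proposal omits this argument entirely, and it cannot be run with merely local bounds.

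\textbf{The entropy bound mechanism.} You invoke ``boundedness from below of the Mabuchi K-energy along the family via its value at $\omega_{KE}$ in the sense of Berman--Darvas--Lu.'' This is vague and hides a genuine difficulty: the Mabuchi energies $\mathcal{M}_{\omega_\varepsilon}$ live in varying classes and one would need a delicate continuity/coercivity statement as $\varepsilon\to 0$. The paper instead defines a modified energy $E_{\omega_\varepsilon}$ whose entropy term has the fixed denominator $\omega_0^n$, observes that cscK potentials still minimize it (its variation equals that of $\mathcal{M}_{\omega_\varepsilon}$), so $E_{\omega_\varepsilon}(\varphi_\varepsilon)\le E_{\omega_\varepsilon}(0)\le C$, and then bounds the entropy from above by an explicit Dervan-type argument (Lemmas~\ref{xiajie}--\ref{zhongyao}): Jensen's inequality with the $\alpha$-invariant, and elementary inequalities among the mixed Monge--Amp\`ere integrals, together with the cancellation $\underline{\eta}\to n$. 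This avoids both the $J$-equation and any coercivity input. Your route might be salvageable, but as written it is not a proof.

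In short: your architecture matches the paper's at a coarse level, but the three load-bearing steps -- upgrading the entropy to an $L^p$ ($p>1$) bound on $e^{F_\varepsilon}$ via the ABP auxiliary function argument, obtaining \emph{global} $C^0$ bounds on $F_\varepsilon$, and the global integration-by-parts identity that converts constant scalar curvature into the Einstein condition in the limit -- are each missing or replaced by something that would not close.
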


Jian-Shi-Song further noted that this conjecture can be understood from the perspective of slope stability introduced by Ross-Thomas \cite{MR2219940} and is related to the Gross-Wilson \cite{MR1863732} from the standpoint of the Strominger-Yau-Zaslow \cite{MR1429831} conjecture in mirror symmetry. If the canonical model of $M$ is smooth and the canonical morphism $\Phi: M \to M_{\text{can}}$ has no singular fibers,  Conjecture \ref{Conj} was shown to be true by Fine \cite{MR2144537} (Theorem 8.1 and its proof), and the main purpose of the current paper is to show that the local smooth convergence of Conjecture $\ref{Conj}$ holds when $-c_1(M)$ is big and nef. 

We call a compact K\"{a}hler manifold $M$ a smooth minimal model if $-c_1(M)$ is nef, and a manifold of general type if $-c_1(M)$ is big. Recall that $-c_1(M)$ is said to be nef if for all $\varepsilon > 0$, $-c_1(M) > \varepsilon [\omega_0]$ and big and nef if it is nef and $(-c_1(M))^{n} > 0$. One immediate benefit of assuming that $-c_1(M)$ is big and nef is the existence of a semi-positive representative in $-c_1(M)$. Since $-c_1(M)$ is big and nef, $M$ is Moishezon which implies that $M$ is projective. By Kawamata's base point free theorem, $K_{M}$ is semi-ample. Hence there exists $\eta \in -c_1(M)$, which is some multiple of the pullback of the Fubini-Study metric through the canonical map $\Phi : M \to \mathbb P^{N}$. In particular, $\eta$ is semi-positive, and we consider the sequence of reference K\"{a}hler metrics $\omega_{\varepsilon} = \eta + \varepsilon \omega_0 > 0$. Also notice that we can choose a volume form $\Omega$ such that $\mathrm{Ric}(\Omega) = -\eta$, then by Yau's theorem \cite{yau1978ricci} we can always choose $\omega_0$ to be such that $\text{Ric}(\omega_0) = - \eta$.

\begin{Th} \label{wode}
Suppose that $(M, \omega_0)$ is a compact K\"{a}hler manifold of dimension $n$ with its canonical bundle being big and nef, and $\mathrm{Ric}(\omega_0) = - \eta$. There exists an effective divisor $E$ such that the sequence of cscK metrics $\omega_{\varphi_{\varepsilon}} \in -c_1(M) + \varepsilon [\omega_0]$, given by Theorem \ref{existencetheorem}, converges in $C_{\mathrm{loc}}^{\infty}(M \backslash E)$ to the unique singular K\"{a}hler Einstein metric in $-c_1(M)$ as $\varepsilon \to 0$.
\end{Th}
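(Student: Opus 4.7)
The plan is to derive $\varepsilon$-independent a priori estimates for the cscK potentials $\varphi_\varepsilon$ on compact subsets of $M \setminus E$, and then identify the smooth limit with the singular K\"ahler--Einstein potential in $-c_1(M)$, whose existence and uniqueness come from the pluripotential theory developed by Eyssidieux--Guedj--Zeriahi. Since $-c_1(M)$ is big and nef, Kodaira's lemma produces an effective divisor $E$ and a small $\delta>0$ such that $-c_1(M) - \delta[E]$ is represented by a K\"ahler form $\omega_A$. Choosing a defining section $s$ of $\mathcal{O}(E)$ with a suitable Hermitian metric $h$ gives
\begin{equation*}
\eta + \delta \sqrt{-1}\partial\bar\partial \log|s|_h^2 \;\geq\; \omega_A \quad \text{on } M\setminus E.
\end{equation*}
The function $\rho := \delta\log|s|_h^2$ serves as the main barrier: it tends to $-\infty$ along $E$ and provides a strictly positive $\sqrt{-1}\partial\bar\partial$-contribution on any compactum of $M\setminus E$.

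I would then follow the Chen--Cheng framework and rewrite the cscK condition as the coupled system
\begin{equation*}
\omega_{\varphi_\varepsilon}^n = e^{F_\varepsilon}\omega_\varepsilon^n,\qquad
\Delta_{\omega_{\varphi_\varepsilon}}F_\varepsilon = \operatorname{tr}_{\omega_{\varphi_\varepsilon}}\operatorname{Ric}(\omega_\varepsilon) - \underline{S}_\varepsilon,
\end{equation*}
with $\underline{S}_\varepsilon$ the constant scalar curvature determined by $[\omega_\varepsilon]$. The required estimates, all uniform in $\varepsilon$, are: (i) a global $L^\infty$ bound $\|\varphi_\varepsilon\|_{L^\infty(M)}\leq C$ after normalization; (ii) an upper bound $F_\varepsilon \leq C$ on $M$; (iii) a lower bound $F_\varepsilon \geq -C_K$ on each compact $K\subset M\setminus E$; and (iv) a Laplacian estimate $\operatorname{tr}_{\omega_0}\omega_{\varphi_\varepsilon} \leq C_K$ on $K$. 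Step (i) should follow by combining the Chen--Cheng $L^\infty$ machinery with uniform entropy and $\alpha$-invariant bounds in the big class $-c_1(M)$, in the spirit of the proof of Theorem~\ref{existencetheorem} due to Jian--Shi--Song; step (ii) is a maximum principle applied to $F_\varepsilon - B\varphi_\varepsilon$ for $B$ large, exploiting $\eta\geq 0$.

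The core of the analysis lies in (iii) and (iv), where $\rho$ is essential. For (iii) I would apply the maximum principle to $F_\varepsilon + \lambda \rho$, together with the curvature identity $\operatorname{Ric}(\omega_\varepsilon) = -\eta - \sqrt{-1}\partial\bar\partial \log(\omega_\varepsilon^n/\omega_0^n)$ and the inequality $\eta + \sqrt{-1}\partial\bar\partial \rho \geq \omega_A$, which controls the trace terms from below. For (iv) I would run a Chern--Lu / Aubin--Yau computation for
\begin{equation*}
\log \operatorname{tr}_{\omega_0}\omega_{\varphi_\varepsilon} + \mu\rho - \nu\varphi_\varepsilon,
\end{equation*}
where the $\mu\rho$ term forces the maximum to stay on any prescribed compactum of $M\setminus E$, converting bigness of $K_M$ into a local $C^2$ bound exactly as in the degenerate Monge--Amp\`ere theory. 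With (iv) in hand, the cscK system is uniformly elliptic on $K$; Evans--Krylov followed by bootstrap Schauder estimates then yield $C^\infty_{\mathrm{loc}}(M\setminus E)$ bounds of every order.

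Convergence is then a compactness argument: by Arzel\`a--Ascoli, any subsequence $\varphi_{\varepsilon_k}$ has a $C^\infty_{\mathrm{loc}}(M\setminus E)$ subsequential limit $\varphi_\infty \in \mathrm{PSH}(M,\eta)\cap L^\infty(M)$. Passing to the limit in both equations, with $\underline{S}_\varepsilon \to -n$ and $\omega_\varepsilon \to \eta$, produces the pluripotential Monge--Amp\`ere equation $(\eta + \sqrt{-1}\partial\bar\partial \varphi_\infty)^n = e^{\varphi_\infty + c}\Omega$, whose unique bounded solution is the singular K\"ahler--Einstein potential in $-c_1(M)$. Uniqueness forces the full family, not just a subsequence, to converge. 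The main obstacle I anticipate is the pair (iii)--(iv): the cscK system is coupled and fourth order, so the degenerate barrier arguments must be carried out \emph{simultaneously} in $F_\varepsilon$ and in $\operatorname{tr}_{\omega_0}\omega_{\varphi_\varepsilon}$, and the $\varepsilon$-dependence of $\operatorname{Ric}(\omega_\varepsilon)$ in the scalar equation must be absorbed without spoiling the barrier constants as $\varepsilon\to 0$.
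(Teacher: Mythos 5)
Your overall architecture lines up with the paper: Tsuji-type barrier from Kodaira's lemma, Chen--Cheng-style a priori estimates with the singular weight, then diagonal compactness. But there are two genuine gaps, and one choice you make that blocks one of the key steps.

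\textbf{Identification of the limit.} Passing to the limit $\varepsilon\to 0$ in the coupled second-order system on a compact $K\subset M\setminus E$ only tells you that $\omega_{\varphi_\infty}$ has constant scalar curvature $-n$: the second equation yields
\begin{equation*}
\operatorname{tr}_{\omega_{\varphi_\infty}}\bigl(\sqrt{-1}\partial\bar\partial F_\infty-\operatorname{Ric}(\eta)-\omega_{\varphi_\infty}\bigr)=0,
\end{equation*}
and a vanishing trace does not force the $(1,1)$-form in parentheses to vanish, so you do \emph{not} directly obtain $\operatorname{Ric}(\omega_{\varphi_\infty})=-\omega_{\varphi_\infty}$. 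On a compact manifold cscK in $-c_1$ does imply K\"ahler--Einstein (the trace-free part is harmonic, hence zero), but $M\setminus E$ is non-compact, so this argument is unavailable in the limit. The paper instead performs the energy identity \emph{globally on $M$ before taking the limit}: writing $-\int_M(F_\varepsilon-\varphi_\varepsilon)\Delta_{\omega_{\varphi_\varepsilon}}(F_\varepsilon-\varphi_\varepsilon)\,\omega_{\varphi_\varepsilon}^n$, it bounds $\int_M|\partial(F_\varepsilon-\varphi_\varepsilon)|_{\omega_{\varphi_\varepsilon}}^2\omega_{\varphi_\varepsilon}^n$ by $(\underline R_\varepsilon+n)$ and $\varepsilon$ times uniformly bounded quantities, so it tends to $0$. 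In the local smooth limit this forces $F_\infty=\varphi_\infty+c$ and hence Ricci $=-\omega$. Crucially, this argument needs \emph{global} $L^\infty$ bounds on both $\varphi_\varepsilon$ and $F_\varepsilon$; this is exactly why the paper takes $F_\varepsilon=\log(\omega_{\varphi_\varepsilon}^n/\omega_0^n)$ with the fixed non-degenerate reference $\omega_0$, rather than your $F_\varepsilon=\log(\omega_{\varphi_\varepsilon}^n/\omega_\varepsilon^n)$. With $\omega_\varepsilon^n$ in the denominator $F_\varepsilon$ degenerates near $E$, and you only obtain a lower bound on compacts, which kills the global integration by parts. Your item (iii) is therefore not a cosmetic weakening: it is incompatible with the step that finishes the proof.

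\textbf{Uniform entropy bound.} You appeal to ``uniform entropy bounds in the spirit of Jian--Shi--Song,'' but the constants in the Chen--Cheng scheme are controlled by $\int_M\log(\omega_{\varphi_\varepsilon}^n/\omega_0^n)\,\omega_{\varphi_\varepsilon}^n$, which is \emph{not} the entropy appearing in the standard Mabuchi functional $\mathcal M_{\omega_\varepsilon}$ (that one integrates $\log(\omega_{\varphi_\varepsilon}^n/\omega_\varepsilon^n)$, whose reference measure is itself degenerating). The paper must first define a modified energy $E_{\omega_\varepsilon}$ whose entropy term uses $\omega_0^n$, check that it differs from $\mathcal M_{\omega_\varepsilon}$ by a constant so that cscK potentials remain minimizers, and then use Dervan's explicit lower-bound technique for the $J$-functional together with the $\alpha$-invariant of a fixed class $[\eta+\omega_0]$ to produce an $\varepsilon$-independent upper bound. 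This is where the bigness of $K_M$ enters analytically (through $\underline\eta\to n$ and the positivity of $(\alpha/2n-1+\underline\eta/n)\eta + \varepsilon(\cdots)\omega_0$), and your proposal omits it.

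In short: your Steps (iv)--(v) and the Kodaira barrier are on target. But you need to (a) keep $\omega_0^n$ as the reference so that $F_\varepsilon$ is globally bounded, (b) run Dervan's explicit bound on a modified Mabuchi energy to get uniform entropy, and (c) replace the naive ``pass to the limit in the coupled system'' by the global integration-by-parts identity, which is the only step that upgrades constant scalar curvature to K\"ahler--Einstein off $E$.
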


 The singular K\"{a}hler Einstein metric was first constructed by Kobayashi \cite{MR799669} in complex dimension 2, and then it was constructed in all dimensions as the limit of the normalized K\"{a}hler Ricci flow. Consider the normalized K\"{a}hler-Ricci flow:

\begin{equation}
    \frac{\partial}{\partial t} \omega_{t} = -\text{Ric}(\omega_{t}) - \omega_{t}, \ \omega|_{t = 0} = \omega_0, \ \omega_{t} > 0.
\end{equation}
The following theorem first appeared in the work of Tsuji \cite{Tsuji1988}, but later Tian-Zhang \cite{tian2006kahler} extended it and clarified some parts of the proof (see also Song-Weinkove \cite{MR3185333} or Tosatti \cite{MR3831026} for nice expositions of this result).

\begin{Th} \label{singular}
Let $M$ be a smooth minimal model of general type. Then
\begin{enumerate}
    \item The solution $\omega = \omega(t)$ of the normalized K\"{a}hler-Ricci flow starting at any K\"{a}hler metric $\omega_0$ on $M$ exists for all time.
    
    \item There exists an effective divisor $E$ of $M$ such that $\omega(t)$ converges in $C^{\infty}_{\mathrm{loc}}(M \backslash E)$ to a smooth K\"{a}hler metric on $M \backslash E$.
\end{enumerate}

Furthermore, the K\"{a}hler metric obtained above is the unique metric satisfying
\begin{enumerate}
    \item $\mathrm{Ric}(\omega_{\mathrm{KE}}) = -\omega_{\mathrm{KE}}$ on $M \backslash E$.
    \item There exists a constant $C$ such that 
    \begin{equation}
        C \omega_0^{n} \leq \omega_{\mathrm{KE}}^{n} \leq \frac{1}{C} \omega_0^{n}.
    \end{equation}
    
\end{enumerate}
\end{Th}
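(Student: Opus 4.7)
The plan is to recast the normalized K\"ahler--Ricci flow as a scalar parabolic complex Monge--Amp\`ere equation driven by a degenerating family of reference forms. Computing in cohomology, $\partial_t[\omega_t]=-c_1(M)-[\omega_t]$ integrates to $[\omega_t]=(1-e^{-t})(-c_1(M))+e^{-t}[\omega_0]$, so with $\eta\in -c_1(M)$ the semi-positive representative from the paragraph preceding the theorem, and with $\Omega$ a volume form satisfying $\mathrm{Ric}(\Omega)=-\eta$, I would introduce the reference form $\hat\omega_t=\eta+e^{-t}(\omega_0-\eta)$, which is K\"ahler for every finite $t$, and write $\omega_t=\hat\omega_t+\sqrt{-1}\partial\bar\partial\varphi_t$. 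The flow then reduces to
\[
\frac{\partial\varphi_t}{\partial t}=\log\frac{(\hat\omega_t+\sqrt{-1}\partial\bar\partial\varphi_t)^n}{\Omega}-\varphi_t,\qquad \varphi_0=0,
\]
and long-time existence follows from standard parabolic Monge--Amp\`ere theory since $\hat\omega_t$ remains K\"ahler on $[0,\infty)$.

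The heart of the argument is a family of time-uniform a priori estimates. For the $C^0$ bound I would combine a maximum principle argument on $\varphi_t$ and $\dot\varphi_t$ with Ko{\l}odziej / Eyssidieux--Guedj--Zeriahi $L^\infty$ estimates for degenerate Monge--Amp\`ere equations, using that $(-c_1(M))^n>0$ (bigness) to control the total mass. For higher order estimates on compact subsets of $M\setminus E$, I would use Tsuji's trick: since $-c_1(M)$ is big and nef, Kodaira's lemma supplies an effective divisor $E$ and a small $\delta>0$ for which $\eta-\delta[E]$ is K\"ahler, and after fixing a defining section $s_E$ of $E$ with Hermitian metric $h$, one applies the parabolic maximum principle to a quantity of the form
\[
\log\mathrm{tr}_{\omega_t}\omega_0-A\varphi_t+\delta\log|s_E|_h^2.
\]
The $-A\varphi_t$ term absorbs the torsion/curvature terms produced by the Chern--Lu computation, while the logarithmic term compensates for $\eta$ failing to be strictly positive along $E$, yielding $\mathrm{tr}_{\omega_t}\omega_0\le C|s_E|_h^{-2\delta}$ on $[0,\infty)$. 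A parallel Aubin--Yau estimate in the reverse direction gives a matching lower bound on $\mathrm{tr}_{\omega_0}\omega_t$, and then complex Evans--Krylov together with parabolic Schauder theory bootstrap to $C^k_{\mathrm{loc}}(M\setminus E)$ bounds for every $k$.

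With these estimates, any sequence $t_j\to\infty$ has a subsequential $C^\infty_{\mathrm{loc}}(M\setminus E)$ limit $\omega_{\mathrm{KE}}=\eta+\sqrt{-1}\partial\bar\partial\varphi_\infty$ satisfying the stationary equation $(\eta+\sqrt{-1}\partial\bar\partial\varphi_\infty)^n=e^{\varphi_\infty}\Omega$ globally on $M$ in the pluripotential sense. Smoothness on $M\setminus E$ gives $\mathrm{Ric}(\omega_{\mathrm{KE}})=-\omega_{\mathrm{KE}}$ there, and the two-sided volume comparison in item (2) is an immediate consequence of the uniform $C^0$ bound on $\varphi_\infty$. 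Uniqueness of $\omega_{\mathrm{KE}}$ among metrics with bounded potential follows from the comparison principle for degenerate complex Monge--Amp\`ere equations of Eyssidieux--Guedj--Zeriahi, and since every subsequential limit coincides with this unique solution, the entire flow converges.

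The main obstacle is the second step: producing a $C^2$ estimate that degenerates along precisely the single divisor $E$ rather than on a larger analytic subset. Because $\eta$ is only semi-positive, the usual Chern--Lu/Aubin--Yau computation produces curvature terms that cannot be controlled directly, and a purely cohomological form of Kodaira's lemma is insufficient; one genuinely needs a defining section $s_E$ whose logarithmic singularity is absorbable by the parabolic maximum principle and whose vanishing locus matches the divisor one is trying to exclude. Once the correct auxiliary function is in place, the remaining steps are standard pluripotential technology and parabolic regularity theory.
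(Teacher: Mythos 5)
The paper does not prove Theorem \ref{singular}: it imports it from the literature, citing Tsuji, Tian--Zhang, and the expository accounts of Song--Weinkove and Tosatti, so there is no in-text proof to compare against. Your sketch faithfully reproduces the standard Tsuji--Tian--Zhang argument used in those references (scalar reduction via $\hat\omega_t=(1-e^{-t})\eta+e^{-t}\omega_0$, time-uniform $C^0$ estimates through the parabolic maximum principle and Ko\l odziej/Eyssidieux--Guedj--Zeriahi, Tsuji's trick with $\log|s_E|_h^2$ as the barrier for the Chern--Lu second-order estimate away from $E$, Evans--Krylov/Schauder bootstrap, and uniqueness via the EGZ comparison principle), so it is correct and consistent with the approach the paper intends the reader to have in mind.
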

$\omega_{\text{KE}}$ was also constructed by Eyssidieux-Guedj-Zeriahi \cite{MR2505296} using pluripotential theory.

  As a consequence of Theorem \ref{existencetheorem}, we can pick a unique sequence of cscK metrics $\omega_{\varphi_{\varepsilon}} \in [\omega_{\varepsilon}]$. The K\"{a}hler potentials of these metrics $\varphi_{\varepsilon} \in \mathcal{H_{\omega_{\varepsilon}}}$ satisfy the following coupled equations:

\begin{equation} \label{fundamental}
    \begin{aligned} 
    \frac{\omega_{\varphi_{\varepsilon}}^n}{\omega_0^n} &= e^{F_{\varepsilon}},\\
    \Delta_{\omega_{\varphi_{\varepsilon}}} F_{\epsilon} &= - \underline{R_{\varepsilon}} - \mathrm{tr}_{\omega_{\varphi_{\varepsilon}}} \eta
    \end{aligned}
\end{equation}
where $\underline{R_{\epsilon}} = n \frac{c_1(M) \cdot [\omega_{\epsilon}]^{n-1}}{ [\omega_{\varepsilon}]^{n}}$, $\underline{R_{\epsilon}} \to -n$ as $\epsilon \to 0$, and 
\begin{equation}
 \mathcal{H}_{\omega_{\varepsilon}} = \{v \in C^{\infty}(M) |\omega_{\varepsilon} + \sqrt{-1} \partial \bar \partial v > 0, \sup_{M} v = 0\}.
\end{equation}

We stress that it is important to assume bigness of $-c_1(M)$ in order for the limit of $\underline{R_{\varepsilon}}$ to be $-n$, otherwise it is not true. Also notice that the equations (\ref{fundamental}) are slightly different from the coupled equations considered in \cite{1712.06697}:
\begin{equation}
    \begin{aligned}
      \frac{\omega_{\varphi_{\varepsilon}}^n}{\omega_{\varepsilon}^n} &= e^{F_{\varepsilon}},\\
    \Delta_{\omega_{\varphi_{\varepsilon}}} F_{\epsilon} &= - \underline{R_{\varepsilon}} + \mathrm{tr}_{\omega_{\varphi_{\varepsilon}}} \mathrm{Ric(\omega_{\varepsilon})}.
    \end{aligned}
\end{equation}
Specifically, we replace $\omega_{\varepsilon}$ in the denominator of the first equation by $\omega_0$ and adapt the second equation accordingly because $\omega_{\varepsilon}$ is degenerating. We will see later that we will have to adjust the definition of the Mabuchi Energy to accommodate this change. The strategy of the proof of Theorem \ref{wode} consists of the following steps:
\begin{enumerate}
    \item We establish through sections \ref{firstSection} - \ref{fourthsection} a degenerate version of the estimates in \cite{1712.06697}. More specifically, we show that $\varphi_{\varepsilon}$ is bounded in $C^{\infty}_{\text{loc}}(M \backslash E)$ and the bound depends only on $\int_{M} \mathrm{log} \frac{\omega_{\varphi_{\varepsilon}}^n}{\omega_0^n} \frac{\omega_{\varphi_{\varepsilon}}^n}{n!} =  \int_{M} e^{F_{\varepsilon}} F_{\varepsilon} \frac{\omega_0^n}{n!}$ and $(M, \omega_0)$. We will call $\int_{M} \mathrm{log} \frac{\omega_{\varphi_{\varepsilon}}^n}{\omega_0^n} \frac{\omega_{\varphi_{\varepsilon}}^n}{n!}$ the entropy.
    
    \item We show in section \ref{section5} using a method of Dervan \cite{AFST_2016_6_25_4_919_0} that $\int_{M} e^{F_{\varepsilon}} F_{\varepsilon}  \frac{\omega_0^n}{n!}$ is uniformly bounded i.e. independent of $\varepsilon$, thus $\varphi_{\varepsilon}$ is uniformly bounded in $C^{\infty}_{\text{loc}}(M \backslash E)$.
    
    \item In section \ref{lastsection} we use the estimates and an integration by part argument to conclude that $\omega_{\varphi_{\varepsilon}}$ has to converge in $C^{\infty}_{\text{loc}}(M \backslash E)$ to the unique singular K\"{a}hler Einstein metric in $-c_1(M)$.
\end{enumerate}

We remark that Zheng \cite{1803.09506} considered the problem of the $L^{1}$ convergence of a degenerating  sequence of smooth cscK metrics in a neighborhood of an arbitrary big class. There he also had to generalize Chen-Cheng's \cite{1712.06697} original estimates. Our estimates are different in the sense that we are able to get full non-degenerate $0$-th order estimates on $F_{\varepsilon}$. Furthermore, the estimates in Zheng \cite{1803.09506} are proved with respect to a sequence of specially constructed reference metrics. Also, one critical element of his proof is a version of alpha invariant for any big class using machinery from pluripotential theory, but we do not need it here.

We conclude this section by mentioning that there are also generalizations of Chen-Cheng's estimates in other directions. For instance, Shen \cite{1909.13445} generalizes them to the Hermitian setting, and He generalizes them to Sasaki manifolds \cite{1802.03841}, and extremal metrics  \cite{MR4014289}.

\section{
  \texorpdfstring{$C^{0}$}{TEXT} estimates} \label{firstSection}
In this section we produce $C^0$ estimates for $\varphi_{\varepsilon}$ and $F_{\varepsilon}$ solving the following coupled equations:

\begin{equation} \label{maineq}
    \begin{aligned}
    \frac{\omega_{\varphi_{\varepsilon}}^n}{\omega_0^n} &= e^{F_{\varepsilon}},\\
    \Delta_{\omega_{\varphi_{\varepsilon}}} F_{\epsilon} &= - \underline{R_{\varepsilon}} - \mathrm{tr}_{\omega_{\varphi_{\varepsilon}}} \eta.
    \end{aligned}
\end{equation}
We start with the $C^0$ estimate on $\varphi_{\varepsilon}$.
\begin{Th} \label{mainprop}
There exists a constant $C > 0$ such that $\|\varphi_{\varepsilon}\|_{C^0(M)} \leq C$ where $C$ is dependent on $\int_{M} \mathrm{log} \frac{\omega_{\varphi_{\varepsilon}}^n}{\omega_0^n} \frac{\omega_{\varphi_{\varepsilon}}^n}{n!}$. 
\end{Th}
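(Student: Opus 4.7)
The plan is to adapt Chen--Cheng's $C^0$ argument from \cite{1712.06697} to the present degenerate family $\omega_\varepsilon = \eta + \varepsilon \omega_0$, exploiting the semi-positive smooth representative $\eta$ supplied by the big and nef hypothesis on $-c_1(M)$. After the normalization $\sup_M \varphi_\varepsilon = 0$ it suffices to bound $\inf_M \varphi_\varepsilon$ from below in terms of the entropy.

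The first step is to produce a uniform Skoda-type integrability bound for $e^{-\varphi_\varepsilon}$. Since $\omega_\varepsilon \leq C_0 \omega_0$ with $C_0$ depending only on $\eta$ and $\omega_0$, every $\omega_\varepsilon$-psh function is $C_0 \omega_0$-psh, so $\varphi_\varepsilon / C_0$ is $\omega_0$-psh with supremum zero. The classical H\"ormander--Skoda inequality applied to the \emph{fixed} K\"ahler class $[\omega_0]$ then yields $\int_M e^{-\alpha \varphi_\varepsilon}\, \omega_0^n \leq C$ for a fixed $\alpha > 0$ independent of $\varepsilon$.

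Next, I would introduce the auxiliary Monge--Amp\`ere equation
\begin{equation*}
(\omega_\varepsilon + \sqrt{-1}\,\partial \bar\partial \psi_\varepsilon)^n = A_\varepsilon^{-1} e^{-\varphi_\varepsilon} \omega_0^n, \qquad \sup_M \psi_\varepsilon = 0,
\end{equation*}
with $A_\varepsilon$ normalized so that the total mass equals $V_\varepsilon = [\omega_\varepsilon]^n$. Because $[\omega_\varepsilon]$ sits in the big and nef class $[\eta]$ with $\int_M \eta^n > 0$, the Eyssidieux--Guedj--Zeriahi $L^\infty$ estimate for degenerate Monge--Amp\`ere equations \cite{MR2505296}, combined with the uniform Orlicz integrability from the previous step and the entropy hypothesis, should yield $\|\psi_\varepsilon\|_{L^\infty(M)} \leq C$ independent of $\varepsilon$. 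I would then run Chen--Cheng's maximum-principle comparison: set $u_\varepsilon = -\varphi_\varepsilon - \lambda \psi_\varepsilon + \mu F_\varepsilon$ for suitable $\lambda, \mu > 0$, apply $\Delta_{\omega_{\varphi_\varepsilon}}$, and combine the arithmetic--geometric mean inequality $\mathrm{tr}_{\omega_{\varphi_\varepsilon}} \omega_{\psi_\varepsilon} \geq n(\omega_{\psi_\varepsilon}^n / \omega_{\varphi_\varepsilon}^n)^{1/n}$ with the ratio of the two Monge--Amp\`ere equations and the coupled cscK equation $\Delta_{\omega_{\varphi_\varepsilon}} F_\varepsilon = - \underline{R_\varepsilon} - \mathrm{tr}_{\omega_{\varphi_\varepsilon}} \eta$. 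At a maximum of $u_\varepsilon$ the term $-\mathrm{tr}_{\omega_{\varphi_\varepsilon}} \eta \leq 0$ has a favorable sign precisely because $\eta \geq 0$, and after balancing the constants one obtains $\sup_M u_\varepsilon \leq C$ depending only on the entropy and on $(M, \omega_0)$. Combined with $\|\psi_\varepsilon\|_\infty \leq C$ and $\sup \varphi_\varepsilon = 0$, this gives the required two-sided bound on $\varphi_\varepsilon$.

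The principal obstacle is ensuring that the auxiliary Monge--Amp\`ere estimate is uniform in $\varepsilon$ as $\omega_\varepsilon$ genuinely degenerates along the locus where $\eta$ vanishes: the $L^\infty$ bound on $\psi_\varepsilon$ must depend only on the big and nef class $[\eta]$, the positive volume $\int_M \eta^n$, and an Orlicz norm of the density, not on positivity constants of $\omega_\varepsilon$ that blow up in the limit. The big and nef assumption is exactly what makes this feasible. A secondary technical point is calibrating the constants $\lambda$ and $\mu$ in the maximum-principle quantity so that the contribution from $\mu \Delta_{\omega_{\varphi_\varepsilon}} F_\varepsilon$ absorbs the error terms arising from the degeneracy; this is a close adaptation of the Chen--Cheng bookkeeping but requires careful tracking of the $\varepsilon$-dependence.
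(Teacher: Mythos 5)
Your first step (the uniform Skoda/$\alpha$-invariant bound over the fixed class $[\eta+\omega_0]$, since $\mathcal{H}_{\eta+\varepsilon\omega_0}\subset \mathcal{H}_{\eta+\omega_0}$) matches the paper's Lemma \ref{alphainvariant}, but after that your route diverges substantially and, as written, does not close.

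The central gap is that you never produce a uniform $L^p$ bound on $e^{F_\varepsilon}$, which is what is actually needed before any Kolodziej-type $L^\infty$ estimate can be applied to $\varphi_\varepsilon$ itself. The paper achieves this through the auxiliary Monge--Amp\`ere potential $\rho_\varepsilon$ solving
\begin{equation}
  (\omega_\varepsilon+\sqrt{-1}\partial\bar\partial\rho_\varepsilon)^n
  = \frac{\mathrm{Vol}(\omega_\varepsilon)\, e^{F_\varepsilon}\Phi(F_\varepsilon)\,\omega_0^n}{\int_M e^{F_\varepsilon}\Phi(F_\varepsilon)\,\omega_0^n},
  \qquad \Phi(F)=\sqrt{F^2+1},
\end{equation}
whose whole purpose is that $A_\Phi(F)=\int_M e^F\Phi(F)\,\omega_0^n$ is controlled by the entropy; one then applies an Alexandrov--Bakelman--Pucci argument with a spatial cutoff to the quantity $e^{\delta(F_\varepsilon+\varepsilon\rho_\varepsilon-(1+\varepsilon)\varphi_\varepsilon)}f$, not a naive maximum principle, precisely because the crucial coefficient $-\underline{R_\varepsilon}-(1+\varepsilon)n+\varepsilon n\,\mathrm{Vol}(\omega_\varepsilon)(\Phi(F_\varepsilon)/A_\Phi(F_\varepsilon))^{1/n}$ is not sign-definite; its negative part is only supported on $\{F_\varepsilon\le C(\text{entropy})\}$ and is estimated in $L^{2n}$ via the $\alpha$-invariant. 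The resulting inequality $F_\varepsilon+\varepsilon\rho_\varepsilon-(1+\varepsilon)\varphi_\varepsilon\le C(\text{entropy})$, together with $\rho_\varepsilon\le 0$, $\varphi_\varepsilon\le 0$ and the $\alpha$-invariant again, yields $\int_M e^{2F_\varepsilon}\omega_0^n\le C$, and only then does the degenerate Kolodziej theorem (Zhang, Tian--Zhang, or EGZ/Demailly--Pali) give $\|\varphi_\varepsilon\|_{C^0}\le C$. Your auxiliary equation with density $A_\varepsilon^{-1}e^{-\varphi_\varepsilon}\omega_0^n$ does not encode the entropy at all (its uniform integrability is already guaranteed for free by Skoda, independent of the entropy), so it cannot drive an entropy-dependent conclusion.

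The proposed maximum principle applied to $u_\varepsilon=-\varphi_\varepsilon-\lambda\psi_\varepsilon+\mu F_\varepsilon$ also does not deliver a $C^0$ bound on $\varphi_\varepsilon$: from $\sup_M u_\varepsilon\le C$ and $\|\psi_\varepsilon\|_{L^\infty}\le C$ you only get $-\varphi_\varepsilon\le C-\mu F_\varepsilon$ pointwise, which is vacuous without an a priori lower bound on $F_\varepsilon$ --- a bound the paper only establishes \emph{after} the $C^0$ estimate for $\varphi_\varepsilon$ is in hand (Theorem \ref{boundF}). Your remark that $-\mathrm{tr}_{\omega_{\varphi_\varepsilon}}\eta\le 0$ is ``favorable'' is also backwards at a maximum point: $\Delta_{\omega_{\varphi_\varepsilon}}u_\varepsilon\le 0$ is the constraint, so nonpositive contributions to $\Delta u_\varepsilon$ give you no leverage. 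In short, you have identified the right ingredients (uniform $\alpha$-invariant over a fixed class, an auxiliary degenerate Monge--Amp\`ere potential, a degenerate $L^\infty$ estimate), but the auxiliary equation has the wrong density, the comparison should run through an integrability bound on $e^{F_\varepsilon}$ rather than directly through $\varphi_\varepsilon$, and the ABP-with-cutoff machinery cannot be replaced by a simple maximum principle without losing the entropy dependence.
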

Before proving this, we need some preparations. Denote $\text{Vol}(\omega_{\varepsilon}) = \int_{M} \omega_{\varepsilon}^n$, and as in \cite{1712.06697} we define a smooth real-valued function $\rho_{\varepsilon}$ to be the unique solution to the following equation using Yau's theorem \cite{yau1978ricci}:

\begin{equation} \label{2.2}
\begin{aligned}
    ( \omega_{\varepsilon} + \sqrt{-1} \partial \bar \partial \rho_{\varepsilon})^n &= \frac{\text{Vol}(\omega_{\varepsilon})e^{F_{\varepsilon}} \Phi(F_{\varepsilon}) \omega_0^n}{\int_{M} e^{F_{\varepsilon}} \Phi(F_{\varepsilon}) \omega_0^n}, \\
    \sup_{M} \rho_{\varepsilon} &= 0
\end{aligned}
\end{equation}
where $\Phi(F_{\varepsilon}) = \sqrt{F_{\varepsilon}^2 + 1}$. Let us then recall the definition of Tian's $\alpha$-invariant \cite{Tian1987}.

\begin{Prop}[Tian's $\alpha$-invariant]
For any K\"{a}hler class $[\omega]$ on $M$, there exists an invariant $\alpha(M, [\omega]) > 0$ such that for any $\alpha < \alpha(M, [\omega])$, we have
\begin{equation}
    \int_{M} e^{-\alpha v} \omega^n  \leq C
\end{equation}
\begin{flushleft}
for all $v \in \mathcal{H}_{\omega}$.
\end{flushleft}
\end{Prop}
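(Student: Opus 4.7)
The plan is to reduce the global integrability statement to a uniform local bound on plurisubharmonic functions, exploiting the compactness of the family $\mathcal{H}_\omega$ modulo sup-normalization.

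First, I would choose a finite cover of $M$ by coordinate balls $U_1,\dots,U_N$ on each of which $\omega = \sqrt{-1}\,\partial \bar\partial \rho_i$ for a smooth local potential $\rho_i$, together with relatively compact subballs $U_i' \Subset U_i$ still covering $M$. For any $v \in \mathcal{H}_\omega$ the function $u_i := v + \rho_i$ is plurisubharmonic on $U_i$, and since $v \leq 0$ and the $\rho_i$ are uniformly bounded, $\sup_{U_i} u_i$ is controlled independently of $v$. It therefore suffices to produce constants $\alpha_i > 0$ and $C_i$ such that
\begin{equation}
    \int_{U_i'} e^{-\alpha_i u_i}\,dV \leq C_i
\end{equation}
uniformly in $v$. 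Summing over $i$ and comparing the Euclidean volume form $dV$ with $\omega_0^n$ then yields the proposition with $\alpha(M,[\omega]) := \min_i \alpha_i$ and an overall constant $C := C'\sum_i C_i$.

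For the uniform local bound I would combine two classical ingredients. Skoda's local integrability theorem guarantees that any single plurisubharmonic function $u$ on $U_i$ with $\sup_{U_i} u \leq 0$ satisfies $e^{-\alpha u} \in L^1_{\mathrm{loc}}(U_i)$ provided $\alpha < 2/\sup_{x \in \overline{U_i'}} \nu(u,x)$, where $\nu(u,x)$ denotes the Lelong number at $x$. On the other hand, the family $\{v \in \mathcal{H}_\omega\}$ is compact in $L^1(M)$, a standard consequence of the sub-mean-value inequality combined with the normalization $\sup v = 0$, so the associated family $\{u_i\}$ is a compact set of plurisubharmonic functions on $U_i$ with uniformly bounded supremum. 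Upper semicontinuity of Lelong numbers under $L^1$ convergence then produces a uniform bound on $\sup_x \nu(u_i,x)$ over the family, and Demailly's semicontinuity of complex singularity exponents delivers both a uniform admissible $\alpha_i > 0$ and a uniform constant $C_i$.

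The main obstacle is precisely this promotion from the pointwise Skoda estimate to a bound uniform over the entire family $\mathcal{H}_\omega$. It can be addressed by a compactness-and-contradiction argument: if no uniform pair $(\alpha_i, C_i)$ existed, one could extract a sequence $v_k \in \mathcal{H}_\omega$ with $\int_{U_i'} e^{-\alpha u_{i,k}}\,dV \to \infty$ for every $\alpha > 0$, pass to an $L^1$ subsequential limit $v_\infty$ whose associated local potential $u_{i,\infty}$ is still plurisubharmonic with sup-bounded, apply Skoda to $u_{i,\infty}$ to obtain a specific $\alpha_\infty > 0$ with $\int_{U_i'} e^{-\alpha_\infty u_{i,\infty}}\,dV$ finite, and invoke upper semicontinuity of Lelong numbers together with Fatou's lemma to derive a contradiction. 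Assembling the resulting local estimates by summation over the finite cover produces the desired global bound.
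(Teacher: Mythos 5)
The paper does not actually prove this proposition: it is simply quoted as Tian's classical result, with a citation to Tian (1987). So there is no in-paper argument to compare to, and your sketch is necessarily an independent proof. That said, the route you take --- localize to coordinate balls, pass to the associated psh functions $u_i = v + \rho_i$, and obtain uniformity from $L^1$-compactness of the sup-normalized family plus Demailly--Koll\'ar semicontinuity of complex singularity exponents --- is a legitimate and standard modern way to prove the statement, and as such is fine. Tian's original argument was instead based on Green's function estimates and H\"ormander's $L^2$ method, and Demailly--Koll\'ar is generally viewed as the cleaner approach.

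However, there is a genuine gap in the final paragraph. You claim to derive a contradiction by applying Skoda to the $L^1$-limit $u_{i,\infty}$ to get $\int_{U_i'} e^{-\alpha_\infty u_{i,\infty}}\,dV < \infty$, and then ``invoke upper semicontinuity of Lelong numbers together with Fatou's lemma.'' Fatou goes in the wrong direction here: for the nonnegative functions $e^{-\alpha u_{i,k}}$ with $u_{i,k}\to u_{i,\infty}$ a.e., Fatou gives
\begin{equation}
\int_{U_i'} e^{-\alpha u_{i,\infty}}\,dV \;\le\; \liminf_{k\to\infty}\int_{U_i'} e^{-\alpha u_{i,k}}\,dV,
\end{equation}
which is perfectly consistent with the right-hand side being $+\infty$; finiteness of the limit's integral imposes no upper bound on the sequence's integrals. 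The step from ``the limit is integrable'' to ``the integrals along the sequence are uniformly bounded'' is precisely the content of the Demailly--Koll\'ar semicontinuity theorem, and it is not an elementary measure-theoretic fact: it relies on the Ohsawa--Takegoshi extension theorem. You do invoke Demailly's semicontinuity theorem earlier in the paragraph, and if you simply cite it as a black box (it states exactly that if $u_k \to u$ in $L^1_{\mathrm{loc}}$ and $c < c_{u}(U_i')$ then $\int_{U_i'} e^{-cu_k}\,dV \to \int_{U_i'} e^{-cu}\,dV$, in particular these integrals are uniformly bounded along the sequence) then the argument closes. But the Fatou-plus-Lelong-semicontinuity shortcut does not substitute for it, so as written the contradiction does not go through.
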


\begin{Lemma} \label{alphainvariant}
Given $\alpha > 0$ with $\alpha < \alpha(M, [\eta + \omega_0])$, there is a uniform constant $C > 0$ such that for all $\varepsilon > 0$ with $ \varepsilon \leq 1$ we have
\begin{equation}
    \int_{M} e^{-\alpha v} \omega_0^n  \leq C
\end{equation}

\begin{flushleft}
for all $v \in \mathcal{H}_{\eta + \varepsilon \omega_0}$.
\end{flushleft}

\end{Lemma}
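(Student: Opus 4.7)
The plan is to reduce the whole family of classes $[\eta+\varepsilon\omega_0]$, $\varepsilon\in(0,1]$, to the single fixed K\"ahler class $[\eta+\omega_0]$, and then invoke Tian's $\alpha$-invariant exactly once. The point is that shrinking $\varepsilon$ from $1$ down toward $0$ only strengthens the positivity requirement imposed on the potential, so $\mathcal{H}_{\eta+\varepsilon\omega_0}$ shrinks as $\varepsilon$ decreases.

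First I would verify the set-theoretic inclusion $\mathcal{H}_{\eta+\varepsilon\omega_0}\subseteq \mathcal{H}_{\eta+\omega_0}$ for every $\varepsilon\in(0,1]$. Given $v\in \mathcal{H}_{\eta+\varepsilon\omega_0}$, rewrite
\begin{equation*}
\eta+\omega_0+\sqrt{-1}\,\partial\bar\partial v \;=\; \bigl(\eta+\varepsilon\omega_0+\sqrt{-1}\,\partial\bar\partial v\bigr)\;+\;(1-\varepsilon)\,\omega_0.
\end{equation*}
The first summand is strictly positive by hypothesis on $v$, and the second is non-negative since $\varepsilon\le 1$; hence the sum is strictly positive. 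The normalization $\sup_M v=0$ is unchanged, and $v\in C^{\infty}(M)$, so $v\in\mathcal{H}_{\eta+\omega_0}$ as claimed.

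Second, pick any $\alpha<\alpha(M,[\eta+\omega_0])$. The Tian $\alpha$-invariant proposition applied to the fixed class $[\eta+\omega_0]$ produces a constant $C=C(\alpha,M,\eta,\omega_0)>0$ such that $\int_M e^{-\alpha v}\,\omega_0^n\le C$ for every $v\in\mathcal{H}_{\eta+\omega_0}$. Combined with the inclusion from the first step, this immediately yields $\int_M e^{-\alpha v}\,\omega_0^n\le C$ for every $v\in\mathcal{H}_{\eta+\varepsilon\omega_0}$ with $\varepsilon\in(0,1]$, which is the desired uniform bound.

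There is essentially no analytic obstacle here: the statement is a packaging of the elementary observation that $\eta+\varepsilon\omega_0\le \eta+\omega_0$ as $(1,1)$-forms. The only thing one should be slightly careful about is the boundary case $\varepsilon=1$, where $(1-\varepsilon)\omega_0$ vanishes, but strict positivity of the whole sum is still supplied by the first summand, so the argument goes through unchanged.
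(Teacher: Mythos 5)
Your proof is correct and matches the paper's argument: both establish the inclusion $\mathcal{H}_{\eta+\varepsilon\omega_0}\subseteq \mathcal{H}_{\eta+\omega_0}$ for $\varepsilon\in(0,1]$ via $\eta+\varepsilon\omega_0\leq\eta+\omega_0$ and then apply Tian's $\alpha$-invariant for the fixed class $[\eta+\omega_0]$. The only cosmetic difference is that the paper inserts the intermediate comparison $\int_M e^{-\alpha v}\omega_0^n\leq\int_M e^{-\alpha v}(\eta+\omega_0)^n$ before invoking the invariant, whereas you apply the proposition directly with the $\omega_0^n$ measure as stated.
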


\begin{proof}
The key observation is that when $\varepsilon > 0$ and $\varepsilon \leq 1$, $\eta + \varepsilon \omega_0 \leq \eta + \omega_0$  implies that $\mathcal{H}_{\eta + \varepsilon \omega_0} \subset \mathcal{H}_{\eta + \omega_0}$. Thus, for any $v \in \mathcal{H}_{\eta + \varepsilon \omega_0}$, we have for the fixed $\alpha > 0$ and $\alpha < \alpha(M, [\eta + \omega_0])$
\begin{equation}
    \begin{aligned}
    \int_{M} e^{- \alpha v} \omega_0^n \leq \int_{M} e^{- \alpha v} (\eta + \omega_0)^n \leq C.
    \end{aligned}
\end{equation}
\end{proof}
The most important estimate we need is the following Lemma analogous to Theorem 5.2 in \cite{1712.06697}.

\begin{Lemma} \label{maintool}
There exists a constant $C$ such that for all $\varepsilon > 0$ sufficiently small 
\begin{equation}
    F_{\varepsilon} + \varepsilon \rho_{\varepsilon} - (1 + \varepsilon) \varphi_{\varepsilon} \leq C
\end{equation}
where $C$ only depends on $\int_{M} \mathrm{log} \frac{\omega_{\varphi_{\varepsilon}}^n}{\omega_0^n} \frac{\omega_{\varphi_{\varepsilon}}^n}{n!}$.
\end{Lemma}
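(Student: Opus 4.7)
The plan is to run a maximum-principle argument on the auxiliary function $u := F_\varepsilon + \varepsilon \rho_\varepsilon - (1+\varepsilon) \varphi_\varepsilon$, adapting the scheme of Theorem 5.2 of Chen--Cheng \cite{1712.06697} to the present degenerating situation, where the coefficients $(\varepsilon, 1+\varepsilon)$ are tuned to the collapse $\omega_\varepsilon \to \eta$. First I compute $\Delta_{\omega_{\varphi_\varepsilon}} u$ using the second equation of \eqref{maineq} for $F_\varepsilon$, the identities $\Delta_{\omega_{\varphi_\varepsilon}} \rho_\varepsilon = \mathrm{tr}_{\omega_{\varphi_\varepsilon}}(\omega_{\rho_\varepsilon} - \omega_\varepsilon)$ and $\Delta_{\omega_{\varphi_\varepsilon}} \varphi_\varepsilon = n - \mathrm{tr}_{\omega_{\varphi_\varepsilon}} \omega_\varepsilon$, and the crucial cancellation $\omega_\varepsilon - \eta = \varepsilon \omega_0$. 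The $\mathrm{tr}_{\omega_{\varphi_\varepsilon}} \eta$ contributions drop out and one obtains
\begin{equation*}
    \Delta_{\omega_{\varphi_\varepsilon}} u \;=\; \bigl(-\underline{R_\varepsilon} - (1+\varepsilon) n\bigr) + \varepsilon \, \mathrm{tr}_{\omega_{\varphi_\varepsilon}}(\omega_{\rho_\varepsilon} + \omega_0).
\end{equation*}
Expanding $\underline{R_\varepsilon} = n c_1(M) \cdot [\omega_\varepsilon]^{n-1}/[\omega_\varepsilon]^n$ around $\varepsilon = 0$ using $c_1(M) = -[\eta]$ shows the ``constant'' term is itself of size $O(\varepsilon)$.

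Next, at a point $x_0 \in M$ where $u$ attains its maximum, the inequality $\Delta_{\omega_{\varphi_\varepsilon}} u(x_0) \leq 0$ forces $\mathrm{tr}_{\omega_{\varphi_\varepsilon}}(\omega_{\rho_\varepsilon} + \omega_0)(x_0) \leq C$ uniformly in $\varepsilon$. By AM--GM, $\mathrm{tr}_{\omega_{\varphi_\varepsilon}} \omega_{\rho_\varepsilon} \geq n\bigl(\omega_{\rho_\varepsilon}^n/\omega_{\varphi_\varepsilon}^n\bigr)^{1/n}$, and combining \eqref{2.2} with $\omega_{\varphi_\varepsilon}^n = e^{F_\varepsilon}\omega_0^n$ yields the explicit ratio $\omega_{\rho_\varepsilon}^n/\omega_{\varphi_\varepsilon}^n = \mathrm{Vol}(\omega_\varepsilon) \Phi(F_\varepsilon) \big/ \int_M e^{F_\varepsilon} \Phi(F_\varepsilon) \omega_0^n$. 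The integral in the denominator is controlled by the entropy plus a constant, via $\Phi(F) \leq |F|+1$ and the elementary bound $\int_{\{F<0\}}(-F)e^F \omega_0^n \leq e^{-1} \mathrm{Vol}(\omega_0)$. These combine to give $\Phi(F_\varepsilon(x_0)) \leq C$, and therefore $F_\varepsilon(x_0) \leq C$ with $C$ depending only on the entropy and $(M,\omega_0)$.

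The main obstacle is promoting this pointwise bound on $F_\varepsilon(x_0)$ to the target $u(x_0) \leq C$. Since $\rho_\varepsilon(x_0) \leq 0$, this reduces to controlling $-(1+\varepsilon) \varphi_\varepsilon(x_0)$, yet $\varphi_\varepsilon$ possesses no a priori lower bound --- indeed, such a bound is the very content of Theorem \ref{mainprop}, of which this lemma is the critical input. My plan to close the gap is to combine the pointwise inequality $\Delta_{\omega_{\varphi_\varepsilon}} u \geq -C$ (a direct consequence of the formula above, as the trace term is non-negative) with Tian's $\alpha$-invariant estimate $\int_M e^{-\alpha \varphi_\varepsilon} \omega_0^n \leq C$ from Lemma \ref{alphainvariant}. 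Testing the weak subharmonicity against $e^{p u_+}$ and integrating by parts would convert these two ingredients into an $L^p$ bound on $u_+$, which a standard Moser iteration could then bootstrap to the desired pointwise upper bound.
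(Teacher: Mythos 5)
Your computation of $\Delta_{\omega_{\varphi_\varepsilon}} u$ is correct, and the observation that at an interior maximum $x_0$ the inequality $\Delta_{\omega_{\varphi_\varepsilon}} u(x_0) \le 0$ together with the size $-\underline{R_\varepsilon}-(1+\varepsilon)n = O(\varepsilon)$ and the AM--GM bound on $\mathrm{tr}_{\omega_{\varphi_\varepsilon}}\omega_{\rho_\varepsilon}$ forces $\Phi(F_\varepsilon(x_0)) \le C$ is also correct; this is precisely the pointwise content of inequality (\ref{observation})--(\ref{cancellation}) in the paper. You have also correctly identified the real difficulty: a bound on $F_\varepsilon(x_0)$ says nothing about $-(1+\varepsilon)\varphi_\varepsilon(x_0)$, so a naive interior maximum principle cannot close the argument.

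The proposed fix, however, does not work. Testing $\Delta_{\omega_{\varphi_\varepsilon}} u \ge -C$ against $e^{pu_+}$ and iterating à la Moser requires a Sobolev inequality for the evolving metric $\omega_{\varphi_\varepsilon}$ with a constant uniform in $\varepsilon$; no such bound is available, and in fact proving uniform Sobolev or non-collapsing for degenerating cscK families is a problem at least as hard as the present one. Rewriting the integrals against $\omega_0^n$ does not help either: they pick up factors of $e^{(p+1)F_\varepsilon}$, which is precisely the quantity one cannot control at this stage — indeed, the $L^2$ bound on $e^{F_\varepsilon}$ is a \emph{consequence} of this lemma in the proof of Theorem \ref{mainprop}, so the argument would be circular. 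The Tian $\alpha$-invariant gives an integral bound on $e^{-\alpha\varphi_\varepsilon}$ against $\omega_0^n$, not against $\omega_{\varphi_\varepsilon}^n$, and there is no clean way to marry the two.

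The missing tool is the Alexandrov--Bakelman--Pucci maximum principle applied to $e^{\delta u} f$ for a carefully chosen cut-off $f$ (this is the whole content of Chen--Cheng's Theorem 5.2, which the lemma adapts). ABP is designed exactly for this situation: it bounds $\sup(e^{\delta u} f)$ by a boundary term plus an $L^{2n}$ integral of $(e^{\delta u})^{2n}((\Delta_{\omega_{\varphi_\varepsilon}}(\cdot))^-)^{2n}$ taken with respect to the \emph{fixed} reference volume $\omega_0^n$ (after dividing by $\det g_{\varphi_\varepsilon}^2 = e^{2F_\varepsilon}\det g_0^2$). The integral is supported only on the set where $F_\varepsilon \le C$ (your observation), and there $e^{2F_\varepsilon}$ is bounded, so the integrand reduces to $e^{-2n\delta(1+\varepsilon)\varphi_\varepsilon}\cdot O(\varepsilon^{2n})$, which the $\alpha$-invariant controls against $\omega_0^n$. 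The small cut-off amplitude $\theta \sim \varepsilon$ in $f$ is then calibrated so that the resulting $O(\varepsilon)$ on the right is regained when one divides by $\theta$; this is where the critical $\varepsilon$-dependence of the paper's cut-off function comes from and why a cut-off-free interior maximum principle, or a cut-off-free integral method, cannot replace it.
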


\begin{proof}
Given a point $p_0 \in M$, $0 < d_0 < 1$, we start by choosing a smooth cut-off function $f$ such that
\begin{equation}
    \begin{aligned}
    &1 - \theta \leq f \leq 1, \\
    &f(p_0) = 1, f \equiv 1 - \theta \text{ outside } B_{\frac{d_0}{2}}(p_0),\\
     &|\partial f|_{\omega_0}^2 \leq \frac{4\theta ^2}{d_0^2}, |\partial^2 f|^2_{\omega_0} \leq \frac{4\theta}{d_0^2}
\end{aligned}
\end{equation}
where $p_0$ and $\theta > 0$ are going to be specified later, and $d_0$ is fixed to be a constant strictly larger than 0 and strictly less than 1. Let $\alpha > 0$ be a fixed constant strictly less than $\alpha(M, [\eta + \omega_0])$, and choose $\delta = \frac{\alpha}{4n}$. We will denote the metric tensor of $\omega_{\varphi_{\varepsilon}}$ by $g_{\varphi_{\varepsilon}}$, and suppress $\varepsilon$ for simplicity of notation while carrying out calculations. Calculate 
\begin{equation} \label{fund}
\begin{aligned}
    &\Delta_{\omega_\varphi}(e^{\delta(F + \varepsilon \rho - (1 + \varepsilon) \varphi)}f) \\
    &= \Delta_{\omega_\varphi}(e^{\delta(F + \varepsilon \rho - (1 + \varepsilon) \varphi)}) f + e^{\delta(F + \varepsilon \rho - (1 + \varepsilon) \varphi)} \Delta_{\omega_{\varphi}}(f)  \\
   & +  2 e^{\delta(F + \varepsilon \rho - (1 + \varepsilon) \varphi)} \delta Re(g^{i \bar j}_{\varphi} \partial_{i} (F + \varepsilon \rho - (1 + \varepsilon) \varphi ) \bar \partial_{j} f)\\
   &=  e^{\delta(F + \varepsilon \rho - (1 + \varepsilon) \varphi )} f(\delta^2|\partial (F + \varepsilon \rho - (1 + \varepsilon) \varphi)|_{\omega_{\varphi}}^2 
   + \delta \Delta_{\omega_{\varphi}}(F + \varepsilon \rho - (1 + \varepsilon) \varphi)) \\
   &+ e^{\delta(F + \varepsilon \rho - (1+\varepsilon) \varphi)} \Delta_{\omega_\varphi} f
   +2 e^{\delta(F + \varepsilon \rho - (1+\varepsilon) \varphi)} \delta Re(g^{i \bar j}_{\varphi} \partial_{i} (F + \varepsilon \rho - (1 + \varepsilon) \varphi) \bar \partial_{j} f).
\end{aligned}
\end{equation}
We estimate the terms involved in the above calculation as follows

\begin{equation}
    \begin{aligned}
    &2 \delta Re(g^{i \bar j}_{\varphi} \partial_{i} (F + \varepsilon \rho - ( 1+ \varepsilon) \varphi ) \bar \partial_{j} f) \\
    &\geq - \delta^2 f |\partial(F + \varepsilon \rho - (1 + \varepsilon) \varphi)|_{\omega_{\varphi}}^2 - \frac{|\partial f|_{\omega_{\varphi}}^2}{f}\\
    & \geq -\delta^2 f |\partial(F + \varepsilon \rho - (1 + \varepsilon) \varphi)|_{\omega_{\varphi}}^2 - \frac{|\partial f|_{\omega_0}^2\text{tr}_{\omega_{\varphi}}\omega_0}{f}\\
    & \geq -\delta^2 f |\partial(F + \varepsilon \rho - (1 + \varepsilon) \varphi)|_{\omega_{\varphi}}^2 -\frac{4 \theta^2}{d_0^2(1-\theta)}\text{tr}_{\omega_{\varphi}}\omega_0,
    \end{aligned}
\end{equation}

\begin{equation}
    \begin{aligned}
     e^{\delta(F + \varepsilon \rho - (1 + \varepsilon) \varphi)} \Delta_{\omega_{\varphi}} f  \geq  -e^{\delta(F + \varepsilon \rho - (1 + \varepsilon) \varphi)} \frac{4 \theta}{d_0^2(1-\theta)} f\text{tr}_{\omega_{\varphi}}\omega_0.
    \end{aligned}
\end{equation}
The key computation is the following:
\begin{equation}
    \Delta_{\omega_{\varphi}}( F + \varepsilon \rho - (1 + \varepsilon) \varphi)  = -(\underline{R} + (1 + \varepsilon) n) - \text{tr}_{\omega_{\varphi}} \eta + (1 + \varepsilon) \text{tr}_{\omega_{\varphi}}  \omega + \varepsilon \Delta_{\omega_{\varphi}} \rho.
\end{equation}
Let $A_{\Phi}(F) = \int_{M} e^{F} \Phi(F) \omega_0^n$, and notice by (\ref{2.2}) 
\begin{equation}
\begin{aligned}
    \Delta_{\omega_{\varphi}} \rho &= \text{tr}_{\omega_{\varphi}} (\omega + \sqrt{-1} \partial \bar \partial \rho) - \text{tr}_{\omega_{\varphi}}  \omega \\
    & \geq n(e^{-F}e^{F}\text{Vol}(\omega)\Phi(F)A_{\Phi}(F)^{-1})^{\frac{1}{n}} - \text{tr}_{\omega_{\varphi}}  \omega \\
    & = n(\text{Vol}(\omega)\Phi(F) A_{\Phi}(F)^{-1})^{\frac{1}{n}} - \text{tr}_{\omega_{\varphi}} \omega.
\end{aligned}
\end{equation}
Then we have
\begin{equation}
\begin{aligned}
    &\Delta_{\omega_\varphi}( F + \varepsilon \rho - (1 + \varepsilon) \varphi)\\
    &\geq (-\underline{R} - (1+ \varepsilon) n + \varepsilon n \text{Vol}(\omega)^{\frac{1}{n}}A_{\Phi}(F)^{-\frac{1}{n}}\Phi(F)^{\frac{1}{n}})+ (1 + \varepsilon) \text{tr}_{\omega_{\varphi}} \omega \\
    &- \varepsilon \text{tr}_{\omega_{\varphi}} \omega - \text{tr}_{\omega_{\varphi}}\eta\\
    &= (-\underline{R} - (1 + \varepsilon) n + \varepsilon n \text{Vol}(\omega)^{\frac{1}{n}}A_{\Phi}(F)^{-\frac{1}{n}}\Phi(F)^{\frac{1}{n}}) + \varepsilon \text{tr}_{\omega_{\varphi}} \omega_0.
\end{aligned}  
\end{equation}
Combining all these calculations, we conclude that
\begin{equation} \label{key estimate}
    \begin{aligned}
    &\Delta_{\omega_{\varphi}}( e^{\delta(F + \varepsilon \rho - (1 + \varepsilon) \varphi)}f)\\
    &\geq \delta f e^{\delta(F + \varepsilon \rho - (1 + \epsilon) \varphi)}(- \underline{R} - ( 1+ \varepsilon)n + \varepsilon n \text{Vol}(\omega)^{\frac{1}{n}}A_{\Phi}(F)^{-\frac{1}{n}}\Phi(F)^{\frac{1}{n}})\\
    &+  e^{\delta(F + \varepsilon \rho - (1 + \varepsilon) \varphi)}(\delta f \varepsilon - \frac{4 \theta}{d_0^2(1-\theta)} f -\frac{4 \theta ^2}{d_0^2(1-\theta)^2}) \text{tr}_{\omega_{\varphi}}\omega_0.
    \end{aligned}
\end{equation}
Choosing $\theta = \frac{\delta \varepsilon}{64} d_0^2 = \frac{\alpha \varepsilon}{256n} d_0^2$, notice that when $\varepsilon$ is small enough we have $ \theta < \frac{1}{2}$ and $\frac{\delta \varepsilon}{64} \leq  \sqrt{\frac{\delta\varepsilon}{128}}$,  so that because $f \leq 1$ we estimate

\begin{equation}
\begin{aligned}
    \delta f\epsilon- \frac{4 \theta}{d_0^2(1-\theta)} f -\frac{4 \theta ^2}{d_0^2(1-\theta)^2} &\geq \delta(1 - \theta) \epsilon - \frac{4 \theta}{d_0^2 (1-\theta)} - \frac{4 \theta^2}{d_0^2(1-\theta)^2}\\
    & \geq \frac{\delta \epsilon}{2} - \frac{8 \theta}{d_0^2} - \frac{16 \theta^2}{d_0^2} \\
    &\geq \frac{\delta\varepsilon}{2} -  \frac{\delta\varepsilon}{8} -  \frac{\delta\varepsilon}{8}>0.
\end{aligned}
\end{equation}
So the coefficient of $\text{tr}_{\omega_{\varphi}}\omega_0$ in (\ref{key estimate}) is positive, and we throw it away and conclude
\begin{equation}
\begin{aligned}
    &\Delta_{\omega_{\varphi}}( e^{\delta(F + \varepsilon \rho - ( 1+ \varepsilon) \varphi)}f)\\
    &\geq \delta f e^{\delta(F + \varepsilon \rho - (1 + \varepsilon) \varphi)}(- \underline{R} - (1+\varepsilon) n + \varepsilon n \text{Vol}(\omega)^{\frac{1}{n}}A_{\Phi}(F)^{-\frac{1}{n}}\Phi(F)^{\frac{1}{n}}).
\end{aligned}
\end{equation}
Let $u =  e^{\delta(F + \varepsilon \rho - (1 + \varepsilon) \varphi)}$, and choose $p_0$ to be the maximum point of $u$. Applying the Alexandrov-Bakelman-Pucci maximum principle in $B_{d_0}(p_0)$ we get
\begin{equation}
\begin{aligned}
    &\sup_{B_{d_0}(p_0)} u f  \leq \sup_{\partial B_{d_0}(p_0)} u f \\
    &+C {\Bigg(\int_{B_{d_0}(p_0)} \frac{u^{2n}\Big((- \underline{R} - (1 + \varepsilon) n + \varepsilon n \text{Vol}(\omega)^{\frac{1}{n}}A_{\Phi}(F)^{-\frac{1}{n}}\Phi(F)^{\frac{1}{n}})^{-}\Big)^{2n}}{e^{-2F}} \omega_0^{n}\Bigg)}^{\frac{1}{2n}}
\end{aligned}
\end{equation}
where $C$ is a constant dependent on the dimension of the manifold $n, d_0$ and $\delta$. Notice that the integral is only nonzero on the region where
\begin{equation} \label{observation}
    -\underline{R} - (1 + \varepsilon) n + \varepsilon n \text{Vol}(\omega)^{\frac{1}{n}} A_{\Phi}(F)^{-\frac{1}{n}}\Phi(F)^{\frac{1}{n}} < 0.
\end{equation}
Observe that for $\varepsilon$ sufficiently small,
\begin{equation} \label{cancellation}
    \begin{aligned}
    - \underline{R} - (1 + \epsilon) n &= \frac{n [\eta] \cdot \sum_{k = 0}^{n-1}{\binom{n-1}{k}} \varepsilon^{k}[\omega_0]^k \cdot [\eta]^{n-1-k}}{\sum_{k = 0}^n {\binom{n}{k}} \varepsilon^{k}[\omega_0]^k \cdot [\eta]^{n-k}} - (1 + \epsilon) \geq -C\varepsilon.
    \end{aligned}
\end{equation}
So (\ref{observation}) is only possible if $F \leq C$ where $C$ depends only on $A_{\Phi}(F)$, $n$ and
$A_{\Phi}(F)$ depends only on the entropy. So we get

\begin{equation}
    \begin{aligned}
    & \Bigg(\int_{B_{d_0}(p_0)} \frac{u^{2n}\Big(- \underline{R} - (1 + \varepsilon) n + \varepsilon n A_{\Phi}(F)^{-\frac{1}{n}}\Phi(F)^{\frac{1}{n}})^{-}\Big)^{2n}}{e^{-2F}}  \omega_0^n \Bigg)^{\frac{1}{2n}}\\
    & = \Bigg(\int_{B_{d_0}(p_0) \cap \{F \leq C\}} \frac{u^{2n}\Big(- \underline{R} - (1 + \varepsilon) n + \varepsilon n A_{\Phi}(F)^{-\frac{1}{n}}\Phi(F)^{\frac{1}{n}})^{-}\Big)^{2n}}{e^{-2F}}  \omega_0^n \Bigg)^{\frac{1}{2n}}\\
    &\leq \Bigg(\int_{B_{d_0}(p_0)} \frac{u^{2n}((- \underline{R} - (1 + \varepsilon) n)^{-})^{2n}}{e^{-2F}}  \omega_0^n \Bigg)^{\frac{1}{2n}}\\
    & \leq C \Bigg(\int_{B_{d_0}(p_0) \cap \{F \leq C \} }  e^{2n\delta(F + \varepsilon \rho - (1 + \epsilon) \varphi)}e^{2F}\varepsilon^{2n} \omega_0^n \Bigg)^{\frac{1}{2n}}\\
    &\leq C \varepsilon \Big( \int_{B_{d_0}(p_0)} e^{-(2n\delta (1 + \varepsilon))\varphi} \omega_0^n \Big)^{\frac{1}{2n}}\\
    & \leq  C \varepsilon \Big( \int_{B_{d_0}(p_0)} e^{-\frac{\alpha}{2} \varphi} \omega_0^n\Big)^{\frac{1}{2n}}\\
    & \leq C \varepsilon.
    \end{aligned}
\end{equation}
where for the last inequality we used Lemma \ref{alphainvariant}. So

\begin{equation}
    u(p_0) = \sup_{M} u \leq (1-\theta) \sup_{M} u + C \varepsilon, 
\end{equation}
and 
\begin{equation}
    u(p_0) \leq \frac{C \varepsilon }{\theta} = \frac{C \varepsilon }{\frac{\delta \varepsilon d_0^2}{64}}\leq C.
\end{equation}
\end{proof} 
Thus we are able to conclude that for any $\varepsilon$ sufficiently small,
\begin{equation}
    F + \varepsilon \rho - (1 + \varepsilon) \varphi  \leq C.
\end{equation}
With this estimate at our disposal, we now proceed to prove Lemma \ref{mainprop}.

\begin{proof}[Proof of Lemma \ref{mainprop}]
Let $\alpha$ be a positive constant strictly less than $\alpha(M, [\eta + \omega_0])$, the $\alpha$-invariant associated with $[\eta + \omega_0]$, and $\varepsilon < \frac{\alpha}{2}$, we have
\begin{equation}
   F + \frac{\alpha}{2} \rho - (1 + \varepsilon) \varphi \leq F + \varepsilon \rho   - (1 + \varepsilon) \varphi \leq C
\end{equation}
because $\rho \leq 0$. Thus for $\varepsilon < \frac{\alpha}{2}$,

\begin{equation}
    \begin{aligned}
    C \geq \int_{M} e^{-\alpha \rho} \omega_0^n \geq \int_{M} \exp(2(F - (1+ \varepsilon) \varphi - C)) \omega_0^n
    \geq \int_{M} \exp(2(F -C)) \omega_0^n
    \end{aligned}
\end{equation}
where for the last inequality we used the fact that $\varphi \leq 0$. So we conclude that $e^{F} \in L^{2}(M, \omega_0^{n})$ when $\varepsilon$ is small enough. Then the proof is done by applying a theorem first announced in Tian-Zhang \cite{tian2006kahler} and later proved in Zhang \cite{zhang2006degenerate} which asserts that if for some $p > 1$, $e^{F} \in L^{p}(M, \omega_0^n)$, then the $C^{0}$ estimate on $\varphi$ only depends on the $\|e^{F}\|_{L^{p}(M, \omega_0^n)}$. This is a generalization of Kolodziej's \cite{MR1618325} fundamental result to the degenerate setting. One can find more general versions of their result in Eyssidieux-Guedj-Zeriahi \cite{8192341, MR2505296} and  Demailly-Pali \cite{MR2647006} as well.
\end{proof}

To show that $F_{\varepsilon}$ is uniformly bounded from below we recall a trick due to Song-Tian \cite{MR3506382} which was pointed out to the author by Jian Song. We consider the following auxiliary complex Monge-Amp{\'e}re equations:
\begin{equation} \label{auxiliary}
    (\omega_{\varepsilon} + \sqrt{-1} \partial \bar \partial h_{\varepsilon})^n = e^{h_{\varepsilon}} \omega_0^{n}.
\end{equation}
Again, for each $\varepsilon$ there exists a unique smooth real-valued $h_{\varepsilon}$ solving (\ref{auxiliary}) by Yau's theorem. At a maximum $p_0$ of $h_{\varepsilon}$ we have
\begin{equation}
    e^{h_{\varepsilon}} = \frac{(\omega_{\varepsilon} + \sqrt{-1} \partial \bar \partial h_{\varepsilon})^{n}}{\omega_{0}^n} \leq \frac{\omega_{\varepsilon}^n}{\omega_0^n} \leq C.
\end{equation}
Thus, $h_{\varepsilon}$ is uniformly bounded from above. Then by the theorem of Zhang \cite{zhang2006degenerate} again, we have that $\|h_{\varepsilon}\|_{C^{0}(M)}$ is uniformly bounded.

\begin{Th} \label{boundF}
There exists a constant $C$ depending only on $\|\varphi_{\varepsilon}\|_{C^0(M)}$ and $\|h_{\varepsilon}\|_{C^0(M)}$ such that
\begin{equation}
     F_{\varepsilon} \geq C
\end{equation}
for $\varepsilon$ sufficiently small.
\end{Th}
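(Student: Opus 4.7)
The plan is to apply the maximum principle to the auxiliary function $u = F_\varepsilon + \varphi_\varepsilon$. Combining the second equation of \eqref{maineq} with the identity $\Delta_{\omega_{\varphi_\varepsilon}}\varphi_\varepsilon = n - \mathrm{tr}_{\omega_{\varphi_\varepsilon}}\omega_\varepsilon$, a direct computation yields
\begin{equation*}
\Delta_{\omega_{\varphi_\varepsilon}}(F_\varepsilon + \varphi_\varepsilon) = n - \underline{R_\varepsilon} - \mathrm{tr}_{\omega_{\varphi_\varepsilon}}(\eta + \omega_\varepsilon).
\end{equation*}
At a minimum point $p_0$ of $u$, the left-hand side is nonnegative, and the cancellation estimate \eqref{cancellation} gives $n - \underline{R_\varepsilon} \leq 2n + C\varepsilon$, so that
\begin{equation*}
\mathrm{tr}_{\omega_{\varphi_\varepsilon}}(\eta + \omega_\varepsilon)(p_0) \leq 2n + C\varepsilon.
\end{equation*}

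Second, applying the arithmetic-geometric mean inequality in a local unitary frame diagonalizing $\omega_{\varphi_\varepsilon}$ at $p_0$,
\begin{equation*}
\mathrm{tr}_{\omega_{\varphi_\varepsilon}}(\eta + \omega_\varepsilon) \geq n\left(\frac{(\eta+\omega_\varepsilon)^n}{\omega_{\varphi_\varepsilon}^n}\right)^{1/n} = n e^{-F_\varepsilon/n}\left(\frac{(\eta+\omega_\varepsilon)^n}{\omega_0^n}\right)^{1/n},
\end{equation*}
which converts the trace bound into a pointwise lower bound of the form $F_\varepsilon(p_0) \geq \log\bigl[((\eta+\omega_\varepsilon)^n/\omega_0^n)(p_0)\bigr] - C$. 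Since $p_0$ realizes the minimum of $u$ and $\varphi_\varepsilon$ is bounded in $C^0(M)$, this propagates to the global lower bound $F_\varepsilon(q) \geq F_\varepsilon(p_0) - 2\|\varphi_\varepsilon\|_{C^0(M)}$ for every $q \in M$.

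The main obstacle is that $(\eta+\omega_\varepsilon)^n/\omega_0^n$ can be as small as $\varepsilon^n$ at points where $\eta$ is of lower rank, i.e.\ on or near the exceptional divisor $E$, so that a naive application of the above would only give an $\varepsilon$-dependent bound on $F_\varepsilon$. To produce a lower bound depending only on $\|\varphi_\varepsilon\|_{C^0(M)}$, I would enhance the test function with a correction term built from $\rho_\varepsilon$ (in the spirit of the proof of Lemma \ref{maintool}); the balance between the cancellation estimate \eqref{cancellation} and the Monge-Amp\`ere equation \eqref{2.2} satisfied by $\rho_\varepsilon$ then forces the extremal point away from the degeneration locus and absorbs the potentially problematic factor uniformly in $\varepsilon$.
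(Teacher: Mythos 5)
The paper's own proof works with exactly the same test function: it takes the minimum $p_0$ of $F_\varepsilon + \varphi_\varepsilon$, computes the same identity $\Delta_{\omega_{\varphi_\varepsilon}}(F_\varepsilon + \varphi_\varepsilon) = -\underline{R_\varepsilon} + n - \mathrm{tr}_{\omega_{\varphi_\varepsilon}}(\omega_\varepsilon + \eta)$, and applies the maximum principle. Up to that point you and the paper agree completely. Where you diverge is the step right after: the paper simply writes
\begin{equation*}
-\underline{R_\varepsilon} + n - \mathrm{tr}_{\omega_{\varphi_\varepsilon}}(\omega_\varepsilon + \eta) \;\leq\; C\varepsilon - \varepsilon\,\mathrm{tr}_{\omega_{\varphi_\varepsilon}}\omega_0 \;\leq\; C\varepsilon - \varepsilon n\,e^{-F_\varepsilon(p_0)/n},
\end{equation*}
while you instead apply AM--GM to the whole $\mathrm{tr}_{\omega_{\varphi_\varepsilon}}(\omega_\varepsilon + \eta)$ and then observe that the resulting factor $((\omega_\varepsilon+\eta)^n/\omega_0^n)(p_0)$ is not bounded from below on $E$. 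Your worry is legitimate: writing $\omega_\varepsilon + \eta = 2\eta + \varepsilon\omega_0$, the paper's chain of inequalities amounts to discarding the $2\,\mathrm{tr}_{\omega_{\varphi_\varepsilon}}\eta \geq 0$ term while also needing $-\underline{R_\varepsilon} + n \leq C\varepsilon$; but $-\underline{R_\varepsilon} + n \to 2n$, so the displayed bound actually requires the pointwise inequality $\mathrm{tr}_{\omega_{\varphi_\varepsilon}}\eta(p_0) \geq n - C\varepsilon$, which is not established and is exactly the kind of degeneration (of $\eta$ along $E$) you flag. So you have not merely reproduced the paper's argument; you have correctly isolated the single step that is not transparently justified in the write-up.

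Where your proposal does have a genuine gap is the fix. The suggestion of enhancing the test function ``with a correction term built from $\rho_\varepsilon$'' is not worked out, and $\rho_\varepsilon$ is not the right tool: in Lemma \ref{maintool} $\rho_\varepsilon$ supplies a $\Phi(F_\varepsilon)^{1/n}$ factor via the Monge--Amp\`ere equation \eqref{2.2}, and nothing in its defining data forces an extremal point away from $E$. What would localize $p_0$ off the exceptional locus is the Tsuji barrier $\psi = \log|s|_h^2$ from Lemma \ref{Tsuji} (the tool the paper uses in sections 3 and 4). But even with that barrier the maximum principle only yields an estimate of the form $F_\varepsilon \geq C(\delta') + Q\delta'\,\psi$, which degenerates on $E$ and hence \emph{does not} give the uniform, global lower bound asserted in the theorem — and sending $\delta'\to 0$ loses the constant $C(\delta')$. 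So ``force the extremal point away from the degeneration locus'' is not, by itself, the missing idea; the claimed uniform lower bound across $E$ needs more than a barrier localization, and your sketch does not supply it. You should either prove directly that $\mathrm{tr}_{\omega_{\varphi_\varepsilon}}\eta(p_0) \geq n - C\varepsilon$ at the minimum of $F_\varepsilon + \varphi_\varepsilon$, or find a different test function for which the maximum principle closes without appealing to the positivity of $\eta$.
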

\begin{proof}
Compute at a minimum $p_0$ of $F +  \varphi - h$, we get
\begin{equation}
\begin{aligned}
    0 \leq \Delta_{\omega_{\varphi}}(F + \varphi - h) &=  - \underline{R} +  n - \text{tr}_{\omega_{\varphi}} (\omega + \eta + \sqrt{-1} \partial \bar \partial h) \\
    & \leq C - \text{tr}_{\omega_{\varphi}}(\omega + \sqrt{-1} \partial \bar \partial h)\\
    & \leq C - n e^{\frac{h - F}{n}}.
\end{aligned}
\end{equation}
It implies that
\begin{equation}
    F(p_0) \geq C.
\end{equation}
So for any $p \in M$, 
\begin{equation}
\begin{aligned}
     F(p) + \varphi(p) - h(p) &\geq F(p_0) +  \varphi(p_0) + h(p_0)\\
     &\geq  C +  \varphi(p_0).
\end{aligned}
\end{equation}
\end{proof}

\begin{Th}
There exists a constant $C > 0$ such that
\begin{equation}
    F_{\varepsilon} \leq C
\end{equation}
for all $\varepsilon$ sufficiently small.
\end{Th}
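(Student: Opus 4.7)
The plan is to combine Lemma \ref{maintool}, the $C^0$ bound on $\varphi_{\varepsilon}$ from Theorem \ref{mainprop}, and a uniform Kolodziej-type $L^\infty$ estimate on the auxiliary potential $\rho_{\varepsilon}$ defined by (\ref{2.2}). From Lemma \ref{maintool}, the normalization $\sup_M \rho_{\varepsilon} = 0$, and the uniform $C^0$ bound on $\varphi_{\varepsilon}$, we immediately deduce
\[
F_{\varepsilon} \leq C + (1+\varepsilon)\varphi_{\varepsilon} - \varepsilon \rho_{\varepsilon} \leq C - \varepsilon \rho_{\varepsilon}.
\]
So the desired upper bound on $F_{\varepsilon}$ reduces to bounding $\varepsilon \rho_{\varepsilon}$ from below by a constant uniform in $\varepsilon$, which will follow from a uniform $C^0$ bound on $\rho_{\varepsilon}$ itself.

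To obtain that bound on $\rho_{\varepsilon}$, I would run a bootstrap through an $L^p$ estimate on $e^{F_\varepsilon}$. Exponentiating the previous inequality and applying Tian's $\alpha$-invariant (Lemma \ref{alphainvariant}) to $\rho_{\varepsilon}\in\mathcal{H}_{\omega_\varepsilon}$, for any fixed $p>1$ and all $\varepsilon$ small enough that $p\varepsilon<\alpha(M,[\eta+\omega_0])$, one gets
\[
\int_M e^{pF_{\varepsilon}}\, \omega_0^n \leq C^p \int_M e^{-p\varepsilon \rho_{\varepsilon}}\, \omega_0^n \leq C.
\]
The density $f_{\varepsilon} := \mathrm{Vol}(\omega_{\varepsilon})\, e^{F_{\varepsilon}} \Phi(F_{\varepsilon}) / A_{\Phi}(F_{\varepsilon})$ appearing on the right-hand side of (\ref{2.2}) is then uniformly bounded in $L^q(M, \omega_0^n)$ for some $q > 1$: one uses the lower bound $F_{\varepsilon} \geq -C$ from Theorem \ref{boundF} to absorb $\Phi(F_{\varepsilon}) = \sqrt{F_{\varepsilon}^2+1}$ into $e^{\delta F_{\varepsilon}}$ for arbitrarily small $\delta>0$, together with the uniform lower bound $A_{\Phi}(F_{\varepsilon}) \geq \mathrm{Vol}(\omega_{\varepsilon}) \geq c > 0$ coming from the bigness of $\eta$.

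With the uniform $L^q$ bound on $f_{\varepsilon}$ in hand, I would invoke the degenerate Kolodziej-type $L^{\infty}$ estimate of Eyssidieux-Guedj-Zeriahi \cite{MR2505296} (see also \cite{MR2647006}) applied to (\ref{2.2}) viewed as a family of complex Monge-Amp\`ere equations in the degenerating K\"ahler classes $[\omega_{\varepsilon}] \to [\eta]$. Since $[\eta]$ is big and nef and $[\omega_{\varepsilon}] \leq [\eta+\omega_0]$ for $\varepsilon \leq 1$, this yields $\|\rho_{\varepsilon}\|_{C^0(M)} \leq C$ with a constant independent of $\varepsilon$. Plugging this back into the first display concludes the proof.

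The main obstacle is the last step, namely securing the Kolodziej $L^\infty$ bound on $\rho_\varepsilon$ with constants uniform as the reference class degenerates. The uniform $L^q$ bound on the Monge-Amp\`ere density and the fact that $[\eta]$ is big and nef place us precisely in the setting covered by \cite{MR2505296}, so this is in principle available from existing pluripotential-theoretic machinery, but the $\varepsilon$-uniformity has to be tracked carefully. Once that is in place, the rest of the argument is a direct combination of Lemma \ref{maintool}, Lemma \ref{alphainvariant}, and Theorem \ref{mainprop}.
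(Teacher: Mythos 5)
Your proof is correct and takes essentially the same route as the paper's: reduce via Lemma \ref{maintool} and $\varphi_\varepsilon \leq 0$ to an upper bound on $-\varepsilon\rho_\varepsilon$, obtain a uniform $C^0$ bound on $\rho_\varepsilon$ by feeding the $L^p$ bound on $e^{F_\varepsilon}$ (hence on the density $e^{F_\varepsilon}\Phi(F_\varepsilon)/A_\Phi(F_\varepsilon)$, using the lower bound on $F_\varepsilon$) into a degenerate Kolodziej-type $L^\infty$ estimate. The only cosmetic difference is that you cite Eyssidieux--Guedj--Zeriahi where the paper invokes Zhang \cite{zhang2006degenerate}; both yield the same uniform estimate in the degenerating classes $[\omega_\varepsilon]$, and the paper itself notes these references as interchangeable in the proof of Theorem \ref{mainprop}.
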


\begin{proof}
Using the $L^p$ bound on $e^{F}$, the lower bound on $F$ and the theorem of Zhang \cite{zhang2006degenerate} again we can show that $\rho_{\varepsilon}$ is uniformly bounded as well. Then the theorem follows immediately from Lemma \ref{maintool}.
\end{proof}

\section{Degenerate bound on \texorpdfstring{$|\partial \varphi_{\varepsilon}|_{\omega_0}^2$}{TEXT}}

Let us first recall the following the Kodaira's lemma, for handling degenerate reference metrics.

\begin{Lemma} \label{Tsuji}
There exists an effective divisor $E$ on $M$, a holomorphic section $s$ of $E$ which vanishes to order 1 along the divisor $E$, constants $\sigma > 0, C > 0$ such that for any $\delta' \in (0, \sigma]$, and any Hermitian metric $h$ of $[E]$, where $[E]$ is the line bundle associated with $E$, we have
\begin{equation}
    \eta +  \delta' \sqrt{-1} \partial \bar \partial \mathrm{log}|s|_{h}^2 > C \delta' \omega_0.
\end{equation}
\end{Lemma}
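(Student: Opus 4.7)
The plan is to convert the bigness of $-c_1(M) = c_1(K_M)$ into an honest decomposition of smooth forms $m\eta = \omega_A + c_1(\mathcal{O}(E),h)$, where $\omega_A$ is a K\"ahler form coming from an ample summand; the Poincar\'e--Lelong formula will then reduce the claim to a convex combination of the nef form $\eta$ and the Kähler form $\omega_A$.

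First, since $K_M$ is big and $M$ is projective (as the author notes in the introduction), I would invoke Kodaira's algebraic lemma to find an integer $m \geq 1$, an ample divisor $A$, and a reduced effective divisor $E$ with $mK_M \sim A + E$; let $s$ be the defining section of $E$, which vanishes to order $1$ along $E$. By ampleness of $A$, fix a smooth Hermitian metric $h_A$ on $\mathcal{O}(A)$ whose Chern curvature $\omega_A$ is a Kähler form, rescaled so that $\omega_A \geq c_0 \omega_0$ on $M$ for a uniform $c_0 > 0$ (this uses compactness of $M$).

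Next, the smooth closed $(1,1)$-form $\theta_E := m\eta - \omega_A$ represents the class $c_1(\mathcal{O}(E))$. By the $\partial\bar\partial$-lemma on the compact Kähler manifold $M$, I can select a smooth Hermitian metric $h$ on $\mathcal{O}(E)$ whose Chern curvature equals $\theta_E$: starting from any initial metric $h_0$, write $\theta_E - c_1(\mathcal{O}(E),h_0) = \sqrt{-1}\partial\bar\partial \psi$ for a smooth $\psi$ and set $h := e^{-\psi}h_0$. Away from $E$, the Poincar\'e--Lelong formula then gives
\begin{equation*}
\sqrt{-1}\partial\bar\partial \log|s|_h^2 \;=\; -\,c_1(\mathcal{O}(E),h) \;=\; \omega_A - m\eta,
\end{equation*}
so that for any $\delta' > 0$,
\begin{equation*}
\eta + \delta' \sqrt{-1}\partial\bar\partial \log|s|_h^2 \;=\; (1 - m\delta')\,\eta + \delta'\,\omega_A
\end{equation*}
on $M \setminus E$. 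Setting $\sigma := 1/m$, the coefficient $(1 - m\delta')$ is nonnegative for $\delta' \in (0,\sigma]$, while $\eta \geq 0$ by semi-positivity and $\omega_A \geq c_0\omega_0$, which gives $\eta + \delta'\sqrt{-1}\partial\bar\partial\log|s|_h^2 \geq c_0 \delta' \omega_0$, as required.

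The main obstacle is Step 2: upgrading the purely cohomological identity $m\, c_1(K_M) = c_1(\mathcal{O}(A)) + c_1(\mathcal{O}(E))$ to an equality of smooth forms requires coordinating the choices of $h_A$ and $h$ via the $\partial\bar\partial$-lemma so that the cancellations in Step 3 are exact. One small wrinkle: the statement as written quantifies over every Hermitian metric $h$ on $[E]$, but conformally rescaling $h$ by $e^{-\psi}$ with $\sqrt{-1}\partial\bar\partial\psi$ very negative clearly destroys the estimate. I read the lemma as asserting the existence of a suitable $h$ (depending on $\eta$ and $A$), which is exactly what the construction above produces and what the subsequent sections will use.
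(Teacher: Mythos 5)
Your argument is correct and is the standard proof of Kodaira's lemma, which the paper cites (Song--Tian, Proposition 2.1) rather than reproving: decompose $mK_M \sim A + E$ with $A$ ample and $E$ effective, use the $\partial\bar\partial$-lemma to choose a Hermitian metric $h$ on $[E]$ with curvature $m\eta - \omega_A$, and apply Poincar\'e--Lelong to rewrite $\eta + \delta'\sqrt{-1}\partial\bar\partial\log|s|^2_h = (1-m\delta')\eta + \delta'\omega_A \geq c_0\delta'\omega_0$ for $\delta' \leq 1/m$, using that $\eta \geq 0$ and that any K\"ahler form on compact $M$ dominates a positive multiple of $\omega_0$ (no rescaling of $\omega_A$ is actually needed). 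You have also correctly flagged the quantifier issue: the inequality cannot hold for every Hermitian $h$ with $\sigma$ and $C$ fixed in advance, since replacing $h$ by $e^{-\psi}h$ shifts $\sqrt{-1}\partial\bar\partial\log|s|^2_h$ by $-\sqrt{-1}\partial\bar\partial\psi$, and choosing $\psi$ with $\sqrt{-1}\partial\bar\partial\psi$ as positive as you like at a point off $E$ defeats any such bound (so your phrase ``very negative'' should read ``very positive''). The reading ``there exists a suitable $h$'' is what your construction produces and what the paper actually relies on in the later sections, where $\psi = \log|s|^2_h$ is treated as a single fixed barrier.
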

Moreover, as a special case of the Kodaira lemma, we can and will choose $E$ to be the exceptional locus of the canonical map $\Phi :M \to M_{\mathrm{can}}$, where $\Phi$ fails to be an isomorphism, so that $\eta$ is K\"{a}hler outside $E$ (see Song-Tian \cite{MR3595934} Proposition 2.1). The trick of applying Kodaira's lemma is commonly referred to as \emph{Tsuji's trick} in the literature, and the idea of it is straightforward. $\omega_{\varepsilon}$ is degenerate in the sense that it is tending to $\eta$ which might be zero along $E$. By using the barrier function $\text{log} |s|^{2}_{h}$ which is $-\infty$ along the singular set, one can carry out the usual maximum principle away from $E$. Equipped with this tool, we will show in this section a degenerate version of Theorem 2.1 in \cite{1712.06697}.
\begin{Th} \label{gradientbound222}
There exists constants $q > 0$ and $C > 0$ such that
\begin{equation}
    |\partial \varphi_{\varepsilon}|_{\omega_0}^2 \leq C \frac{1}{|s|_{h}^{2q}} 
\end{equation}
where the constants depend only on $\|\varphi_{\varepsilon}\|_{C^{0}(M)}$, $\|F_{\varepsilon}\|_{C^{0}(M)}$ and $(M, \omega_0)$.
\end{Th}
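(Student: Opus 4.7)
My plan is to adapt the Chen--Cheng gradient estimate (\cite{1712.06697} Theorem 2.1) to the degenerate setting by combining it with Tsuji's trick. Consider the auxiliary function
\begin{equation*}
    H = \log\bigl(1 + |\partial \varphi_{\varepsilon}|^2_{\omega_0}\bigr) - A\varphi_{\varepsilon} + Aq_0 \log|s|_h^2,
\end{equation*}
where $A \geq 1$ is a large constant and $q_0 \in (0,\sigma]$ is fixed, both depending only on $\|\varphi_{\varepsilon}\|_{C^0(M)}$, $\|F_{\varepsilon}\|_{C^0(M)}$, and $(M,\omega_0)$. Since $\log|s|_h^2 \to -\infty$ along $E$ while $H$ is smooth elsewhere (for each fixed $\varepsilon$), $H$ attains its supremum at some interior point $p_0 \in M \setminus E$, where the maximum principle applies.

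At $p_0$ I would compute $\Delta_{\omega_{\varphi_{\varepsilon}}} H \leq 0$ term by term. The standard Bochner-type computation for $\Delta_{\omega_{\varphi_{\varepsilon}}} \log(1 + |\partial \varphi_{\varepsilon}|^2_{\omega_0})$, using the coupled equations \eqref{maineq} and absorbing the cross terms coming from $\partial F_{\varepsilon}$ against positive Hessian-squared contributions via Cauchy--Schwarz, yields a lower bound of the form $-C_1 \mathrm{tr}_{\omega_{\varphi_{\varepsilon}}} \omega_0 - C_2$, with $C_1, C_2$ depending only on $\|F_{\varepsilon}\|_{C^0(M)}$ and the curvature of $\omega_0$. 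The remaining two pieces give $\Delta_{\omega_{\varphi_{\varepsilon}}}(-A\varphi_{\varepsilon}) = -An + A\,\mathrm{tr}_{\omega_{\varphi_{\varepsilon}}} \eta + A\varepsilon\,\mathrm{tr}_{\omega_{\varphi_{\varepsilon}}} \omega_0$, and (smoothly on $M \setminus E$) $\Delta_{\omega_{\varphi_{\varepsilon}}}(Aq_0 \log|s|_h^2) = Aq_0 \,\mathrm{tr}_{\omega_{\varphi_{\varepsilon}}} \sqrt{-1}\partial \bar\partial \log|s|_h^2$.

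I would then invoke Lemma \ref{Tsuji} with $\delta' = q_0$, multiplied through by $A$, to obtain
\begin{equation*}
    A\,\mathrm{tr}_{\omega_{\varphi_{\varepsilon}}}\eta + Aq_0\,\mathrm{tr}_{\omega_{\varphi_{\varepsilon}}} \sqrt{-1}\partial \bar\partial \log|s|_h^2 \geq ACq_0\,\mathrm{tr}_{\omega_{\varphi_{\varepsilon}}} \omega_0.
\end{equation*}
Choosing $A$ large enough that $ACq_0 \geq C_1 + 1$ (a choice depending only on the stated quantities), the coefficient of $\mathrm{tr}_{\omega_{\varphi_{\varepsilon}}} \omega_0$ in $\Delta_{\omega_{\varphi_{\varepsilon}}} H$ becomes strictly positive, and the maximum-principle inequality at $p_0$ forces a bound on $|\partial \varphi_{\varepsilon}|^2_{\omega_0}(p_0)$, so $H(p_0) \leq C$. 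Evaluating $H(p) \leq H(p_0) \leq C$ at arbitrary $p \in M \setminus E$ and using the $C^0$ bound on $\varphi_{\varepsilon}$ then yields $|\partial \varphi_{\varepsilon}|^2_{\omega_0}(p) \leq C'/|s|_h^{2Aq_0}(p)$, which is the claim with $q := Aq_0$.

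I expect the main technical obstacle to be the Bochner computation itself: absorbing the cross terms involving $\partial F_{\varepsilon}$ (which enter through the Monge--Amp\`ere side of \eqref{maineq}) without any a priori gradient bound on $F_{\varepsilon}$. As in Chen--Cheng this is handled via the positive third-derivative terms from the Bochner formula and Cauchy--Schwarz, but the new subtlety specific to the degenerate setting is to arrange all residual bad terms to scale with $\mathrm{tr}_{\omega_{\varphi_{\varepsilon}}} \omega_0$ so that the single positive term generated by Tsuji's barrier via Kodaira's lemma suffices to absorb them---in particular one cannot rely on $A\,\mathrm{tr}_{\omega_{\varphi_{\varepsilon}}} \omega_{\varepsilon}$ as in the non-degenerate case, since $\omega_{\varepsilon}$ itself degenerates along $E$.
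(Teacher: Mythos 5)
Your overall framework---a maximum principle for an auxiliary function penalized by a Tsuji-type barrier $\log|s|_h^2$ so the max lands off $E$, with Kodaira's lemma supplying a positive multiple of $\mathrm{tr}_{\omega_{\varphi_\varepsilon}}\omega_0$ to absorb the bad terms from the Bochner formula---is sound in outline and matches the paper's philosophy. However, there is a genuine gap in the heart of the argument: your auxiliary function $H = \log(1 + |\partial\varphi_\varepsilon|^2_{\omega_0}) - A\varphi_\varepsilon + A q_0 \log|s|_h^2$ contains \emph{no dependence on} $F_\varepsilon$, and this is precisely what makes the Bochner computation fail.

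Here is the problem concretely. Differentiating $\log\frac{\omega_{\varphi_\varepsilon}^n}{\omega_0^n} = F_\varepsilon$ in the $\alpha$ direction gives $g_{\varphi_\varepsilon}^{i\bar i}\varphi_{\varepsilon,\alpha i\bar i} = F_{\varepsilon,\alpha} - g_{\varphi_\varepsilon}^{i\bar i} g_{i\bar i,\alpha}$, so the third-derivative term in the Bochner formula for $\Delta_{\omega_{\varphi_\varepsilon}}|\partial\varphi_\varepsilon|^2_{\omega_0}$ produces the uncontrolled cross term $2\,\mathrm{Re}(F_{\varepsilon,\alpha}\varphi_{\varepsilon,\bar\alpha})$. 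You claim this can be absorbed against the positive Hessian-squared contributions $g_{\varphi_\varepsilon}^{i\bar i}|\varphi_{\varepsilon,\alpha i}|^2$ by Cauchy--Schwarz, but it cannot: the inner product $F_{\varepsilon,\alpha}\varphi_{\varepsilon,\bar\alpha}$ is taken with respect to $g_0$ and involves the unbounded quantity $\partial F_\varepsilon$ directly, whereas the Hessian-squared terms are weighted by $g_{\varphi_\varepsilon}^{i\bar i}$ and involve only $\varphi_\varepsilon$. There is no a priori bound on $|\partial F_\varepsilon|$ (only on $\Delta_{\omega_{\varphi_\varepsilon}} F_\varepsilon$ from the second coupled equation, which controls a Laplacian, not a gradient), so no Cauchy--Schwarz argument closes this term.

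The resolution, following Chen--Cheng (Theorem 2.1 of \cite{1712.06697}) and the paper, is to build $F_\varepsilon$ directly into the weight: the paper considers $e^{A}\bigl(|\partial\varphi_\varepsilon|^2_{\omega_0}+K\bigr)$ with $A = -(F_\varepsilon + \lambda\varphi_\varepsilon) + \tfrac12\varphi_\varepsilon^2 + Q\delta'\psi$. When you expand $\Delta_{\omega_{\varphi_\varepsilon}}\bigl(e^A(|\partial\varphi_\varepsilon|^2_{\omega_0}+K)\bigr)$ by the product rule, the cross terms $2 g_{\varphi_\varepsilon}^{i\bar i}\mathrm{Re}\bigl(A_i\,\varphi_{\varepsilon,\alpha}\varphi_{\varepsilon,\bar\alpha\bar i}\bigr)$ combine with $g_{\varphi_\varepsilon}^{i\bar i}|\varphi_{\varepsilon,\alpha i}|^2$ and $g_{\varphi_\varepsilon}^{i\bar i}|A_i|^2|\partial\varphi_\varepsilon|^2_{\omega_0}$ into the complete square $g_{\varphi_\varepsilon}^{i\bar i}|\varphi_{\varepsilon,i\alpha}+A_i\varphi_{\varepsilon,\alpha}|^2 \geq 0$, and the dangerous $2\,\mathrm{Re}(F_{\varepsilon,\alpha}\varphi_{\varepsilon,\bar\alpha})$ is re-expressed via $F_{\varepsilon,\alpha} = -A_\alpha - \lambda\varphi_{\varepsilon,\alpha} + \varphi_\varepsilon\varphi_{\varepsilon,\alpha} + Q\delta'\psi_\alpha$ into pieces that either cancel or are controlled by $\|\varphi_\varepsilon\|_{C^0}$, $\|F_\varepsilon\|_{C^0}$ and the barrier. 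Without the $-F_\varepsilon$ in the exponent this cancellation simply is not available. Your Tsuji-trick scaffolding is fine (the paper implements it as $+Q\delta'\psi$ in $A$), but the proof cannot be completed from your $H$ as written; you should replace $H$ by something of the form $-\bigl(F_\varepsilon + \lambda\varphi_\varepsilon\bigr) + \tfrac12\varphi_\varepsilon^2 + Q\delta'\log|s|_h^2 + \log\bigl(|\partial\varphi_\varepsilon|^2_{\omega_0}+K\bigr)$, which is the logarithm of the paper's test function.
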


\begin{proof}
We will suppress $\varepsilon$ for simplicity and apply the maximum principle to 
\begin{equation}
  e^{-(F + \lambda \varphi) + \frac{1}{2}\varphi^2 + Q \delta' \psi}(|\partial \varphi|_{\omega_0}^2 + K).   
\end{equation}
Here $\lambda > 0, K > 0, Q > 0$ are constants to be determined later, and $\delta' \in (0, \min\{\sigma, 1\}]$, where $\sigma$ is the constant given in Lemma \ref{Tsuji}. Let $A(F, \varphi, \psi) = -(F + \lambda  \varphi) + \frac{1}{2} \varphi^2 + Q \delta' \psi$, and denote the metric tensor of $\omega_{\varphi}$ by $g_{\varphi}$, the metric tensor of $\omega$ by $g$ and the metric tensor of $\omega_0$ by $g_0$. Unless otherwise noted, we will always choose a normal holomorphic coordinate neighborhood for $g_0$ such that $g_{\varphi}$ is diagonal.

\begin{equation}
\begin{aligned}
    &\Delta_{\omega_{\varphi}}(e^{A}(|\partial  \varphi|_{\omega_0}^2 + K))\\ &= \Delta_{\omega_{\varphi}}(e^{A})(|\partial  \varphi|_{\omega_0}^2 + K) + e^{A} \Delta_{\omega_{ \varphi}}(|\partial \varphi|_{\omega_0}^2) + 2g^{i \bar i}_{\varphi}e^{A}\text{Re}(A_i(|\partial \varphi|_{\omega_0}^2)_{\bar i}).
\end{aligned}
\end{equation}
Let us estimate the three terms involved in the above equation separately
\begin{equation}
    \begin{aligned}
    &\Delta_{\omega_{\varphi}}(e^{A})\\
    &= e^{A}  g^{i \bar i}_{\varphi}|A_i|^2 + e^{A}(-\Delta_{\omega_{\varphi}}(F+\lambda \varphi) +  \varphi \Delta_{\omega_{\varphi}} \varphi + Q \delta' \Delta_{\omega_{\varphi}}\psi) + e^{A} g^{i \bar i}_{\varphi} | \varphi_i|^2\\
    &= e^{A}(\underline{R} - \lambda n + (\lambda-\varphi) \text{tr}_{\omega_{\varphi}} \omega  + \text{tr}_{\omega_{\varphi}}\eta + n  \varphi + Q \delta' \Delta_{\omega_{\varphi}}\psi) + e^{A}g^{i \bar i}_{\varphi} | \varphi_i|^2\\
    &+e^{A}  g^{i \bar i}_{\varphi}|A_i|^2,
    \end{aligned}
\end{equation}
and
\begin{equation}
    \begin{aligned}
    \Delta_{\omega_\varphi}(|\partial  \varphi|_{\omega_0}^2) =  g^{i \bar i}_{\varphi}R_{\alpha \beta i \bar i}(g_0)  \varphi_{\alpha} \varphi_{\bar \beta} +  g^{i \bar i}_{\varphi}| \varphi_{\alpha i}|^2 +  g^{i \bar i}_{\varphi}  | \varphi_{\alpha \bar i}|^2 +  g^{i \bar i}_{\varphi}(\varphi_{\alpha i \bar i}   \varphi_{\bar \alpha} +   \varphi_{\bar \alpha i \bar i} \varphi_{\alpha}).
    \end{aligned}
\end{equation}
Differentiating
\begin{equation}
    \mathrm{log} \frac{\omega_{\varphi}^n}{\omega_0^n} = F,
\end{equation}
we get
\begin{equation}
     g^{ i \bar i}_{\varphi}(  g_{i \bar i, \alpha} + \varphi_{i \bar i \alpha} ) = F_{\alpha} \text{ and }  g^{ i \bar i}_{\varphi}( g_{i \bar i, \bar \alpha} + \varphi_{i \bar i \bar \alpha} ) = F_{\bar \alpha}.
\end{equation}
So 
\begin{equation}
    \begin{aligned}
     &\Delta_{\omega_\varphi}(|\partial \varphi|_{\omega_0}^2) \\
     &= g^{i \bar i}_{\varphi}R_{\alpha \bar \beta i \bar i}(g_0) \varphi_{\alpha}  \varphi_{\bar \beta} + g^{i \bar i}_{\varphi}| \varphi_{\alpha i}|^2 + g^{i \bar i}_{\varphi} |\varphi_{\alpha \bar i}|^2 +  F_{\alpha}  \varphi_{\bar \alpha} + F_{\bar \alpha} \varphi_{\alpha} - 2 g^{i \bar i}_{\varphi}\text{Re}(g_{i \bar i, \alpha}  \varphi_{\bar \alpha})
     \\
     & \geq - C_{1}|\partial  \varphi|_{\omega_0}^2 \mathrm{tr}_{\omega_{\varphi}}\omega_{0} +  g^{i \bar i}_{\varphi}| \varphi_{\alpha i}|^2 +  g^{i \bar i}_{\varphi} | \varphi_{\alpha \bar i}|^2 - 2\text{Re}(A_{\alpha} \varphi_{\bar \alpha})  -2 (\lambda -  \varphi) |\partial  \varphi|_{\omega_0}^2 \\
     &+2 Q\delta'\text{Re}(\psi_{\alpha} \varphi_{\bar \alpha}) - 2 g^{i \bar i}_{\varphi}\text{Re}(g_{i \bar i, \alpha}  \varphi_{\bar \alpha}).
    \end{aligned}
\end{equation}
where $C_{1}$ depends on a lower bound on the bisectional curvature of $\omega_0$. Then we have
\begin{equation}
    \begin{aligned}
    & e^{-A} \Delta_{\omega_\varphi}(e^A(|\partial \varphi|_{\omega_0}^2 + K)) \\
    & \geq (\underline{R} - \lambda n + (\lambda-\varphi) \text{tr}_{\omega_{\varphi}} \omega + Q\delta' \Delta_{\omega_{\varphi}}\psi + \text{tr}_{\omega_{ \varphi}}\eta + n \varphi)(|\partial  \varphi|_{\omega_0}^2 + K)\\
     &+ g^{i \bar i}_{\varphi}|A_i|^2(|\partial  \varphi|_{\omega_0}^2 + K)
    +|\partial  \varphi|^2_{\omega_{\varphi}}(|\partial  \varphi|_{\omega_0}^2 + K) - C_{1}|\partial \varphi|_{\omega_0}^2 \mathrm{tr}_{\omega_{\varphi}}\omega_{0} + g^{i \bar i}_{\varphi}| \varphi_{\alpha i}|^2 \\
    &+ g^{i \bar i}_{\varphi} |\varphi_{\alpha \bar i}|^2 + (-2 \lambda + 2  \varphi)|\partial \varphi|_{\omega_0}^2 - 2\text{Re}(A_{\alpha} \varphi_{\bar \alpha}) + 2g^{i \bar i}_{\varphi}\text{Re}(A_i( \varphi_{\alpha \bar i} \varphi_{\bar \alpha} +  \varphi_{\bar \alpha \bar i}  \varphi_{\alpha})) \\
    &+ 2Q \delta' \text{Re}(\psi_{\alpha} \varphi_{\bar \alpha}) - 2 g^{i \bar i}_{\varphi}\text{Re}( g_{i \bar i, \alpha} \varphi_{\bar \alpha}).
    \end{aligned}
\end{equation}
Notice the following complete square in the above sum
\begin{equation}
    \begin{aligned}
    &g^{i \bar i}_{\varphi}|\varphi_{i \alpha} + A_i \varphi_{\alpha}|^2\\
    &=g^{i \bar i}_{\varphi}|\varphi_{i \alpha}|^2 + 2g^{i \bar i}_{\varphi} \text{Re}(A_i \varphi_{\alpha}\varphi_{\bar \alpha \bar i}) +g^{i \bar i}_{\varphi}|A_i|^2|\partial  \varphi|_{\omega_0}^2.
    \end{aligned}
\end{equation}
Also observe that
\begin{equation}
    \begin{aligned}
    &-A_{\alpha}\varphi_{\bar \alpha} + g^{i \bar i}_{\varphi}A_i  \varphi_{ \alpha \bar i} \varphi_{\bar \alpha}\\
    &= g^{i \bar i}_{\varphi}(A_{i} \varphi_{\alpha \bar i} \varphi_{\bar \alpha} -(g_{\varphi})_{\alpha \bar i} A_i \varphi_{\bar \alpha})\\
    &= -g^{i \bar i}_{\varphi}  g_{\alpha \bar i}A_i \varphi_{\bar \alpha}.
    \end{aligned}
\end{equation}
In summary 
\begin{equation}
    \begin{aligned}
    &\Delta_{\varphi}(e^A(|\partial  \varphi|_{\omega_0}^2 + K))e^{-A}\\ &\geq K g^{i \bar i}_{\varphi}|A_i|^2 + g^{i \bar i}_{\varphi}|\varphi_i|^2(|\partial \varphi|_{\omega_0}^2 + K)\\ &+(\text{tr}_{\omega_{\varphi}}(\lambda -  \varphi)  \omega +  Q \delta' \Delta_{\omega_{\varphi}}\psi + \text{tr}_{\omega_{\varphi}}\eta)(|\partial  \varphi|_{\omega_0}^2 + K)\\
    & + (\underline{R} - \lambda n + n \varphi)(|\partial \varphi|_{\omega_0}^2 + K) - C_{1}|\partial \varphi|_{\omega_0}^2\mathrm{tr}_{\omega_{\varphi}}\omega_{0} + g^{i \bar i}_{\varphi}|\varphi_{\alpha \bar i}|^{2}\\ &+(-2\lambda + 2\varphi)|\partial \varphi|_{\omega_0}^2 
     - 2g^{i \bar i}_{\varphi}\text{Re}( g_{\alpha \bar i} A_i \varphi_{\bar \alpha}) + 2Q\delta'\text{Re}(\psi_{\alpha} \varphi_{\bar \alpha})-2\text{Re}( g^{i \bar i}_{\varphi}  g _{i \bar i, \alpha}  \varphi_{\bar \alpha}).
    \end{aligned}
\end{equation}
Furthermore, by Young's inequality
\begin{equation}
        2\text{Re}(g^{i \bar i}_{\varphi} g_{i \bar i, \alpha} \varphi_{\bar \alpha}) \leq C_{2}(\mathrm{tr}_{\omega_{\varphi}}\omega_{0}  +  |\partial \varphi|_{\omega_0}^2 \mathrm{tr}_{\omega_{\varphi}}\omega_{0}),
\end{equation}
and
\begin{equation}
   2\text{Re}( g^{i \bar i}_{\varphi} g_{\alpha \bar i}A_i \varphi_{\bar \alpha}) \leq C_{3} (g^{i \bar i}_{\varphi} |A_{i}|^2 +  |\partial \varphi|_{\omega_0}^2 \mathrm{tr}_{\omega_{\varphi}}\omega_{0}).
\end{equation}
Let $\lambda = ( \frac{1}{\delta'}+ \frac{1}{C_{\delta'}})(\|\varphi\|_{C^{0}(M)} + 10 + C_{1} + C_{2} + C_{3})$, $Q = \lambda - \|\varphi\|_{C^{0}(M)} > \max \{ \frac{10 + C_{1} + C_{2}+C_{3}}{C_{\delta'}}, \frac{2}{\delta'} \} $, $K = C_{2} + C_{3}$, so that $C_{\delta'} Q \geq 10 + C_{1} + C_{2} + C_{3}$, $Q \delta' > 1$ and recall that $\psi$ is chosen such that
\begin{equation}
    \omega + \delta' \sqrt{-1} \partial \bar \partial \psi \geq \eta + \delta' \sqrt{-1} \partial \bar \partial \psi >  C_{\delta'} \omega_0.
\end{equation}
So
\begin{equation}
\begin{aligned}
    &(\text{tr}_{\omega_{\varphi}}(\lambda - \varphi)  \omega + Q \delta' \Delta_{\omega_{\varphi}}  \psi + \text{tr}_{\omega_{\varphi}}\eta)(|\partial \varphi|_{\omega_0}^2 + K)  - C_{1}|\partial \varphi|_{\omega_0}^ 2 \mathrm{tr}_{\omega_{\varphi}}\omega_{0} \\
    &  - 2\text{Re}(g^{i \bar i}_{\varphi} g_{i \bar i, \alpha} \varphi_{\bar \alpha}) + K g^{i \bar i}_{\varphi}|A_i|^2  + 2\text{Re} (g^{i \bar i}_{\varphi} g_{\alpha \bar i} A_{i} \varphi_{\bar \alpha})\\
    &\geq Q(\text{tr}_{\omega_{\varphi}} \omega + \delta' \Delta_{\omega_{\varphi}}  \psi) (|\partial \varphi|_{\omega_0}^2 + K) - (C_{1} + C_{2} + C_{3})|\partial \varphi|_{\omega_0}^2\mathrm{tr}_{\omega_{\varphi}}\omega_{0}\\
    &- C_{2}\mathrm{tr}_{\omega_{\varphi}}\omega_{0}  + (K - C_{3}) g^{i \bar i}_{\varphi}|A_i|^2\\
    &\geq QC_{\delta'}(|\partial \varphi|_{\omega_0}^2 + K)\mathrm{tr}_{\omega_{\varphi}}\omega_{0} - (C_{1} + C_{2} + C_{3})|\partial \varphi|_{\omega_0}^2 \mathrm{tr}_{\omega_{\varphi}}\omega_{0} - C_{2}\mathrm{tr}_{\omega_{\varphi}}\omega_{0}\\
    & \geq 10 |\partial \varphi|_{\omega_0}^2 \mathrm{tr}_{\omega_{\varphi}}\omega_{0}.
\end{aligned}
\end{equation}
Recall that $\underline{R} \to -n$, so
\begin{equation}
    (\underline{R} - \lambda n + n \varphi)(|\partial \varphi|_{\omega_0}^2 + K) \geq -C|\partial \varphi|_{\omega_0}^2  - C,
\end{equation}
and
\begin{equation}
    (-2\lambda + 2\varphi)|\partial \varphi|_{\omega_0}^2 \geq -C |\partial \varphi|_{\omega_0}^2
\end{equation}
where the constants depend on the $0$-th order estimate on $\varphi$. Also as an elementary consequence of the equation (see \cite{1712.06697} page 12)
\begin{equation}
 \frac{\omega_{\varphi}^n}{\omega_0^n} = e^{F}
\end{equation}
yields 
\begin{equation}
    g^{i \bar i}_{\varphi}|\varphi_i|^2 |\partial \varphi|_{\omega_0}^2 + |\partial \varphi|_{\omega_0}^2 \mathrm{tr}_{\omega_{\varphi}}\omega_{0}\geq C |\partial \varphi|_{\omega_0}^{2 + \frac{2}{n}}e^{\frac{-F}{n}}.
\end{equation}
Recall that $\psi = \text{log} |s|_{h}^{2}$, where $s$ is a holomorphic section vanishing on $E$. Then we get
\begin{equation}
    g^{i \bar i}_{\varphi}|\varphi_i|^2 |\partial \varphi|_{\omega_0}^2 + |\partial \varphi|_{\omega_0}^2 \mathrm{tr}_{\omega_{\varphi}}\omega_{0} \geq  C |\partial \varphi|_{\omega_0}^{2 + \frac{2}{n}}e^{\frac{-F + Q\delta'\psi}{n}}.
\end{equation}
We arrive at
\begin{equation}
\begin{aligned}
    &\Delta_{\omega_{\varphi}}(e^{A}(|\partial \varphi|_{\omega_0}^2 + K)) \\
    &\geq e^{A}(g^{i \bar i}_{\varphi}|\varphi_{i}|^2 |\partial \varphi|_{\omega_0}^2 + |\partial \varphi|_{\omega_0}^2\mathrm{tr}_{\omega_{\varphi}}\omega_{0}- C|\partial \varphi|_{\omega_0}^2 - C + 2Q \delta' \mathrm{Re}(\psi_{\alpha} \varphi_{\bar \alpha})) \\
    &\geq e^{A}(Ce^{\frac{-F + Q \delta' \psi}{n}}(|\partial \varphi|_{\omega_0}^2)^{1 + \frac{1}{n}} -  C |\partial \varphi|_{\omega_0}^2 - C + 2Q \delta' \mathrm{Re}(\psi_{\alpha} \varphi_{\bar \alpha}))\\
    &\geq C(e^{Q \delta' \psi}|\partial \varphi|_{\omega_0}^2)^{1 + \frac{1}{n}} -  C e^{Q \delta' \psi}|\partial \varphi|_{\omega_0}^2 - C e^{Q\delta'\psi} +  2e^{A}Q\delta'\mathrm{Re}(\psi_{\alpha} \varphi_{\bar \alpha}) \\
\end{aligned}
\end{equation}
where we used the fact that $\|F\|_{C^{0}(M)}$ and $\|\varphi\|_{C^{0}(M)}$ are bounded. For the last term we have
\begin{equation}
 \psi_{\alpha} = \frac{\partial_{\alpha}(|s|_{h}^{2})}{|s|_{h}^{2}},
\end{equation}
So 
\begin{equation}
\begin{aligned}
  &2e^{A}Q\delta'\mathrm{Re}(\psi_{\alpha} \varphi_{\alpha})\\
   &\geq -C e^{2\psi} |\partial \psi|_{\omega_0}^{2} - Ce^{(2Q\delta' - 2)\psi}|\partial \varphi|^{2}_{\omega_0}\\
   &\geq -C - Ce^{Q\delta'\psi}|\partial \varphi|^2_{\omega_0}.
\end{aligned}
\end{equation}
where we used the fact that  $Q$ is chosen such that $2Q\delta' - 2 > Q\delta'$ so $e^{(2Q\delta' - 2) \psi} \leq C e^{Q \delta' \psi}$. Finally we reach
\begin{equation}
\begin{aligned}
    &\Delta_{\omega_{\varphi}}(e^{A}(|\partial \varphi|_{\omega_0}^2 + K)) \\
    &\geq C (e^{Q \delta' \psi}|\partial \varphi|_{\omega_0}^2)^{1 + \frac{1}{n}} -  C e^{Q \delta' \psi}|\partial \varphi|_{\omega_0}^2 - C e^{Q \delta' \psi}  -C.\\
\end{aligned}
\end{equation}
At a maximum $p$ of $e^{A}(|\partial \varphi|_{\omega_0}^2 + K)$, which is away from $E$ because $A = -\infty$ on $E$, we have
\begin{equation}
\begin{aligned}
      &C (e^{Q\delta'\psi(p)}|\partial \varphi|_{\omega_0}^2(p))^{1 + \frac{1}{n}} - C e^{Q\delta'\psi(p)}|\partial \varphi|_{\omega_0}^2(p)\\
      &\leq C e^{Q \delta' \psi(p)} + C\\
      &\leq C
\end{aligned}
\end{equation}
where for the last inequality we used the fact that $e^{P\psi}$ is bounded from above for any constant $P > 0$, and our choice that $Q \delta' > 1$. Thus, we conclude that
$e^{Q \delta' \psi} |\partial \varphi|_{\omega_0}^2(p)$ is bounded from above. As a consequence, $e^{A}(|\partial \varphi|_{\omega_0}^2 + K)(p)$ is also bounded from above. This concludes our proof.
\end{proof}

\section{Degenerate \texorpdfstring{$L^p$}{TEXT} bound on \texorpdfstring{$\mathrm{tr}_{\omega_0} \label{fourthsection} \omega_{\varphi_{\varepsilon}}$ and $C^{\infty}_{\mathrm{loc}}(M \backslash E)$}{TEXT} bound on \texorpdfstring{$\varphi_{\varepsilon}$}{TEXT}}

In this section we will first establish a degenerate version of Theorem 3.1 in \cite{1712.06697}.

\begin{Th}\label{lpbound}
For each $p > 0$, there exists a constant $\gamma(p) > 0$ such that 
\begin{equation}
    \int_{M} e^{\gamma(p) \psi} (\mathrm{tr}_{\omega_0} \omega_{\varphi_{\varepsilon}})^p \omega_0^n \leq C(p, (M , \omega_0), \|\varphi_{\varepsilon}\|_{C^{0}(M)}, \|F_{\varepsilon}\|_{C^{0}(M)}).
\end{equation}
\end{Th}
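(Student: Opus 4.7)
The approach adapts Chen-Cheng's proof of their Theorem 3.1 in \cite{1712.06697} to our degenerate setting, using Kodaira's Lemma (Lemma \ref{Tsuji}) as a Tsuji-type barrier. Throughout write $v_\varepsilon = \mathrm{tr}_{\omega_0}\omega_{\varphi_\varepsilon}$ and drop the subscript $\varepsilon$.

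First I would derive a pointwise Chern-Lu type inequality for $\log v$. Using the standard Yau computation of $\Delta_{\omega_\varphi}\log\mathrm{tr}_{\omega_0}\omega_\varphi$ and a lower bound on the bisectional curvature of $\omega_0$, together with $\mathrm{Ric}(\omega_\varphi) = -\eta + \sqrt{-1}\partial\bar\partial F$ (from the first equation in \eqref{maineq}) and the second equation $\Delta_{\omega_\varphi}F = -\underline{R_\varepsilon} - \mathrm{tr}_{\omega_\varphi}\eta$, one obtains a preliminary inequality of the form
\begin{equation*}
\Delta_{\omega_\varphi}\log v \geq -C\,\mathrm{tr}_{\omega_\varphi}\omega_0 + (\text{terms controlled by }\|F\|_{C^0}\text{ and }\|\varphi\|_{C^0}).
\end{equation*}
Adding the Tsuji barrier $Q\delta'\psi$ with $\psi = \log|s|_h^2$, Lemma \ref{Tsuji} supplies $\eta + \delta'\sqrt{-1}\partial\bar\partial\psi \geq C_{\delta'}\omega_0$, so that $Q\delta'\Delta_{\omega_\varphi}\psi$ produces a positive multiple of $\mathrm{tr}_{\omega_\varphi}\omega_0$ that dominates $-C\,\mathrm{tr}_{\omega_\varphi}\omega_0$ for $Q$ large; a companion term $-\lambda\varphi$, as in Theorem \ref{gradientbound222}, absorbs residual pieces. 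The net result is, on $M\setminus E$,
\begin{equation*}
\Delta_{\omega_\varphi}\bigl(\log v - \lambda\varphi + Q\delta'\psi\bigr) \geq c_0 \,\mathrm{tr}_{\omega_\varphi}\omega_0 - C,
\end{equation*}
with $c_0>0$ and $C$ depending only on $\|F\|_{C^0}$, $\|\varphi\|_{C^0}$, $(M,\omega_0)$, and $\delta'$.

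The weighted $L^p$ bound is then extracted by integration by parts. For $p>0$, I would multiply the above inequality by $\chi_\tau\, v^{p} e^{-p\lambda\varphi + pQ\delta'\psi}$, where $\chi_\tau$ is a smooth cutoff supported in $\{|s|_h^2 > \tau\}$, and integrate against $\omega_\varphi^n$. Transferring the Laplacian via IBP and using the identity $\Delta_{\omega_\varphi}\log v = \Delta_{\omega_\varphi}v/v - |\partial v|^2_{\omega_\varphi}/v^2$, one controls the cross gradient terms via Young's inequality and absorbs them into a definite-sign gradient contribution. Passing to $\tau\to 0$ is legitimate because $e^{pQ\delta'\psi}$ and $|\partial\psi|^2$ are integrable against $\omega_0^n$ for fixed $p$. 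Invoking $\mathrm{tr}_{\omega_\varphi}\omega_0 \cdot \omega_\varphi^n = n\,\omega_0 \wedge \omega_\varphi^{n-1}$ and the Monge-Amp\`ere equation to expand things, the resulting integral inequality iterates in $p$, starting from the cohomological base case $\int_M v\,\omega_0^n = n[\omega_\varepsilon]\cdot[\omega_0]^{n-1}$, which is uniformly bounded in $\varepsilon$.

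The chief obstacle is the interplay between the exponential weight in $\psi$ and the boundary behavior of the IBP: the weight $\gamma(p)$ must grow linearly in $p$ so that each iteration preserves integrability near $E$, and the cutoff boundary contributions must vanish in the $\tau\to 0$ limit uniformly in $\varepsilon$. This is guaranteed provided the constants from Kodaira's Lemma, the bisectional curvature bound for $\omega_0$, and the cohomological quantity in the base case are fixed independently of $\varepsilon$, which is automatic once $\eta$, $\omega_0$, and $E$ are specified at the outset.
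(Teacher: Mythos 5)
Your preliminary pointwise inequality is where the proposal goes wrong, and it masks the real difficulty of the theorem. The Yau/Chern--Lu computation gives
\begin{equation*}
\Delta_{\omega_\varphi}\log\mathrm{tr}_{\omega_0}\omega_\varphi \;\geq\; -C_1\,\mathrm{tr}_{\omega_\varphi}\omega_0 \;-\; \frac{g_0^{i\bar j}R_{i\bar j}(g_\varphi)}{\mathrm{tr}_{\omega_0}\omega_\varphi},
\end{equation*}
and using $\omega_\varphi^n = e^F\omega_0^n$ one rewrites $-g_0^{i\bar j}R_{i\bar j}(g_\varphi) = \mathrm{tr}_{\omega_0}\eta + \Delta_{\omega_0}F$. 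The term $\Delta_{\omega_0}F/\mathrm{tr}_{\omega_0}\omega_\varphi$ is \emph{not} controlled by $\|F\|_{C^0}$: at this stage in the argument $F$ has only a $C^0$ bound, so $\Delta_{\omega_0}F$ has no pointwise sign or size control whatsoever. Your claimed inequality
$\Delta_{\omega_\varphi}(\log v - \lambda\varphi + Q\delta'\psi) \geq c_0\,\mathrm{tr}_{\omega_\varphi}\omega_0 - C$
with $C$ a constant does not hold; if it did, a maximum-principle argument would yield a weighted pointwise bound on the second fundamental form purely from $C^0$ data on $F$, which is exactly what is not possible here. Note also that the second coupled equation supplies $\Delta_{\omega_\varphi}F$, not $\Delta_{\omega_0}F$; these are different operators and substituting one for the other is not valid.

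The paper's proof is organized precisely to deal with this term, and the mechanism is absent from your sketch. One works with $u = e^{-\beta B}\mathrm{tr}_{\omega_0}\omega_\varphi$, $B = F + \lambda\varphi - \lambda\delta'\psi$, with the weight $e^{-\beta F}$ built in; after integrating $u^{2p}\Delta_{\omega_\varphi}u$ against $\omega_\varphi^n = e^F\omega_0^n$, the offending $\Delta_{\omega_0}F$ appears under the measure $e^{(1-\beta)F - \lambda\beta\tilde\varphi}u^{2p}\omega_0^n$ and is integrated by parts \emph{with respect to $\omega_0$}. For $\beta > 1$ this produces a term $(\beta-1)\int |\partial F|_{\omega_0}^2\,e^{(1-\beta)F-\lambda\beta\tilde\varphi}u^{2p}\omega_0^n$ with a favorable sign, which is used to absorb, via Young, the cross terms $\langle\partial u,\partial F\rangle_{\omega_0}$ and $\langle\partial\tilde\varphi,\partial F\rangle_{\omega_0}$. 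This last cross term is then controlled using the degenerate gradient bound $|\partial\varphi|_{\omega_0}^2 \leq C/|s|_h^{2q}$ from Theorem~\ref{gradientbound222} together with a choice of $\lambda \gtrsim q/\delta'$ so the extra weight from $\psi$ compensates. None of these three ingredients (the $\beta$-conjugation that turns $\Delta_{\omega_0}F$ into $(\beta-1)|\partial F|_{\omega_0}^2$ after integration by parts, the condition $\beta > 2p+1$ in the induction, and Theorem~\ref{gradientbound222} as input) appear in your outline, and without them the argument does not close.
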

For simplicity, let $B_{\varepsilon} = F_{\varepsilon}+\lambda \varphi_{\varepsilon} - \lambda \delta' \psi$, where $\delta' < \min\{\frac{1}{2},\sigma \}$ and $\sigma$ is specified in Lemma \ref{Tsuji}. Moreover, let $\beta$ be a constant greater than $1$. We start by computing at a point away from $E$ in a normal holomorphic coordinate neighborhood for $g_0$ such that $g_{\varphi}$ is diagonal.
\begin{equation}
    \begin{aligned}
    &\Delta_{\omega_{\varphi}}(e^{-\beta B}(\text{tr}_{\omega_0} \omega_{\varphi}))\\
    &= \Delta_{\omega_{\varphi}}(e^{-\beta B})\text{tr}_{\omega_0} \omega_{\varphi} + e^{-\beta B} \Delta_{\omega_{\varphi}}(\text{tr}_{\omega_0} \omega_{\varphi})-2\beta \mathrm{Re}(e^{-\beta B}g^{i \bar i}_{\varphi}(B_i \partial_{\bar i} \text{tr}_{\omega_0} \omega_{\varphi})),\\
    \end{aligned}
\end{equation}
and
\begin{equation}
    \begin{aligned}
    \Delta_{\omega_{\varphi}}e^{-\beta B} &= (\beta^2 g^{i \bar i}_{\varphi} |B_i|^2 + \beta (\underline{R} -\lambda n + \lambda \text{tr}_{\omega_{\varphi}}(\omega + \delta ' \sqrt{-1} \partial \bar \partial \psi) + \mathrm{tr}_{\omega_{\varphi}}\eta))e^{-\beta B}\\
    & \geq \beta^2 g^{i \bar i}_{\varphi}|B_i|^2e^{-\beta B} + \beta e^{-\beta B}(\underline{R} - \lambda n) +   C_{\delta'} \lambda \beta e^{-\beta B}\text{tr}_{\omega_{\varphi}}\omega_0.
    \end{aligned}
\end{equation}
Now we estimate $\Delta_{\omega_{\varphi}}\text{tr}_{\omega_0} \omega_{\varphi}$ by first recalling that from the calculation in Yau \cite{yau1978ricci} (see also \cite{MR3186384} Lemma 3.7 for an exposition) we have
\begin{equation}
    \Delta_{\omega_{\varphi}} \log \text{tr}_{\omega_0} \omega_{\varphi} \geq - C_{1} \text{tr}_{\omega_{\varphi}} \omega_0 - \frac{g_0^{i \bar j}R_{i \bar j}(g_{\varphi})}{\text{tr}_{\omega_0} \omega_{\varphi}}
\end{equation}
where $C_{1}$ only depends on a lower bound for the bisectional curvature of $g_0$. Recall that 
\begin{equation}
  \log \frac{ \omega_{\varphi}^n}{\omega_0^n} = F.
\end{equation}
Then we have
\begin{equation}
    - \mathrm{tr}_{\omega_0} \mathrm{Ric}(\omega_{\varphi}) = \mathrm{tr}_{\omega_0} \eta + \Delta_{\omega_0} F.
\end{equation}
Thus
\begin{equation}
    \Delta_{\omega_{\varphi}} \text{log} \text{tr}_{\omega_0} \omega_{\varphi} \geq - C_{1} \text{tr}_{\omega_{\varphi}} \omega_0 + \frac{\Delta_{\omega_0} F}{\text{tr}_{\omega_0} \omega_{\varphi}}  + \frac{\text{tr}_{\omega_0} \eta}{\text{tr}_{\omega_0} \omega_{\varphi}},
\end{equation}
and
\begin{equation}
\Delta_{\omega_{\varphi}} \text{tr}_{\omega_0}\omega_{\varphi} \geq -C_{1} \text{tr}_{\omega_{\varphi}} \omega_0 \text{tr}_{\omega_0} \omega_{\varphi}+ \frac{|\partial \text{tr}_{\omega_0}\omega_{\varphi}|_{\omega_{\varphi}}^2}{\text{tr}_{\omega_0} \omega_{\varphi}} + \Delta_{\omega_0} F + \text{tr}_{\omega_0} \eta.
\end{equation}
Then we are able to conclude that
\begin{equation}
    \begin{aligned}
    &e^{\beta B} \Delta_{\omega_{\varphi}}(e^{-\beta B}(\text{tr}_{\omega_0} \omega_{\varphi}))  \\
    &\geq (\beta^2 g^{i \bar i}_{\varphi} |B_i|^2 + \beta(\underline{R} - \lambda n))\text{tr}_{\omega_0} \omega_{\varphi}  + (C_{\delta'} \lambda \beta  - C_{1}) \text{tr}_{\omega_{\varphi}} \omega_0 \text{tr}_{\omega_0} \omega_{\varphi}\\
    &+ \frac{|\partial \text{tr}_{\omega_0}\omega_{\varphi}|_{\omega_{\varphi}}^2}{\text{tr}_{\omega_0} \omega_{\varphi}} + \Delta_{\omega_0} F
    + \text{tr}_{\omega_0} \eta
    -2\beta \text{Re}(g^{i \bar i}_{\varphi}(B_i \partial_{\bar i} \text{tr}_{\omega_0} \omega_{\varphi}))\\
    &\geq  \beta(\underline{R} - \lambda n)\text{tr}_{\omega_0} \omega_{\varphi} +  (C_{\delta'} \lambda \beta  - C_{1}) \text{tr}_{\omega_{\varphi}} \omega_0 \text{tr}_{\omega_0}\omega_{\varphi} + \Delta_{\omega_0} F + \text{tr}_{\omega_{0}} \eta
    \end{aligned}
\end{equation}
where we dropped  the terms
\begin{equation}
    \begin{aligned}
    &\beta^2 g^{i \bar i}_{\varphi}|B_i|^2 \text{tr}_{\omega_0} \omega_{\varphi} - 2\beta \text{Re}(g^{i \bar i}_{\varphi}(B_i \partial_{\bar i}\text{tr}_{\omega_0} \omega_{\varphi})) + \frac{|\partial \text{tr}_{\omega_0}\omega_{\varphi}|_{\omega_{\varphi}}^2}{\text{tr}_{\omega_0} \omega_{\varphi}}\\
    &= \text{tr}_{\omega_0} \omega_{\varphi} g^{i \bar i}_{\varphi}(\beta^2 |B_i|^2  - 2 \beta \text{Re}(\frac{B_i \partial_{\bar i} \text{tr}_{\omega_0} \omega_{\varphi}}{\text{tr}_{\omega_0}\omega_{\varphi}}) + \frac{|\partial_{\bar i} \text{tr}_{\omega_0}\omega_{\varphi}|^2}{(\text{tr}_{\omega_0} \omega_{\varphi})^2}) \geq 0.\\
    \end{aligned}
\end{equation}
Use
\begin{equation}
    \begin{aligned}
    \text{tr}_{\omega_{\varphi}} \omega_0 \text{tr}_{\omega_0} \omega_{\varphi} \geq e^{-\frac{F}{n-1}}(\text{tr}_{\omega_0} \omega_{\varphi})^{1+\frac{1}{n-1}},
    \end{aligned}
\end{equation}
and choose $\lambda = \max( \frac{2C_{1} + 2}{C_{\delta'}},\frac{q}{2 \delta'} + 1, \frac{1}{2 \delta'} + 1)$, where $q$ is specified in Theorem \ref{gradientbound222}, so that $C_{\delta'} \lambda \beta \geq (2C_{1} + 2) \beta > 2C_{1} + 1$. We get
\begin{equation} \label{4.12}
    \begin{aligned}
    &\Delta_{\omega_{\varphi}}(e^{-\beta B}(\text{tr}_{\omega_0} \omega_{\varphi})) \\ &\geq \beta(\underline{R} - \lambda n)\text{tr}_{\omega_0} \omega_{\varphi}e^{-\beta B} + (C_{\delta'} \lambda \beta - C_{1})e^{-\frac{F}{n-1} - \beta B} (\text{tr}_{\omega_0} \omega_{\varphi})^{1+\frac{1}{n-1}}\\
    &+(\Delta_{\omega_0} F +  \text{tr}_{\omega_0} \eta)e^{-\beta B}\\
    &\geq \beta(\underline{R} - \lambda n)\text{tr}_{\omega_0} \omega_{\varphi}e^{-\beta B} + (C_{\delta'} \lambda \beta - C_{1})e^{-\frac{F}{n-1} - \beta B} (\text{tr}_{\omega_0} \omega_{\varphi})^{1+\frac{1}{n-1}}\\
    &+ \Delta_{\omega_0} F e^{- \beta B}
     \end{aligned}
\end{equation}
where for the last line we took advantage of the fact that $\eta \geq 0$. Let $u = e^{- \beta B} \text{tr}_{\omega_0} \omega_{\varphi}$, for any $p \geq 0$, we have
\begin{equation} \label{4.13}
\begin{aligned}
    &\frac{1}{2p+1} \Delta_{\omega_{\varphi}}u^{2p + 1} \\
    &= 2pu^{2p-2}e^{-\beta B}(\text{tr}_{\omega_0}\omega_{\varphi})|\partial u|_{\omega_{\varphi}}^2 + u^{2p} \Delta_{\omega_\varphi} u 
    \geq 2pu^{2p-2}|\partial u|_{\omega_0}^2e^{-\beta B} + u^{2p} \Delta_{\omega_{\varphi}}u.
\end{aligned}
\end{equation}
Integrating both sides of (\ref{4.13}) with respect to $\omega_{\varphi}^n = e^{F}\omega_0^n$ and use (\ref{4.12}) we get
\begin{equation} \label{firstintegration}
    \begin{aligned}
    &\int_{M} 2pu^{2p-2}|\partial u|_{\omega_0}^2e^{-\beta B + F} \omega_0^n +\int_{M} \Delta_{\omega_0} F e^{-\beta B + F} u^{2p}\omega_0^n \\
    &+ \int_{M} u^{2p}e^{-\beta B + \frac{n-2}{n-1}F}(C_{\delta'} \lambda \beta - C_{1})(\text{tr}_{\omega_0}\omega_{\varphi})^{1+\frac{1}{n-1}} \omega_0^n \\
    &\leq \int_{M} \beta(\lambda n - \underline{R}) \text{tr}_{\omega_0}\omega_{\varphi}e^{-\beta B + F}u^{2p} \omega_0^n.
    \end{aligned}
\end{equation}
Let us denote $\tilde \varphi = \varphi - \delta' \psi$, and to handle the term involving $\Delta_{\omega_0} F$, we apply integration by parts
\begin{equation}\label{integration}
\begin{aligned}
        &\int_{M} e^{-\beta B + F} u^{2p} \Delta_{\omega_0} F  \omega_0^n  = \int_{M} (\beta-1) e^{(1-\beta)F - \beta \lambda \tilde \varphi} u^{2p} |\partial F|_{\omega_0}^2 \omega_0^n\\
        &+\int_{M} \beta \lambda e^{(1-\beta)F - \lambda \beta \tilde \varphi}u^{2p}\langle \partial  \tilde \varphi, \partial F \rangle_{\omega_0} \omega_0^n - \int_{M} 2p e^{(1-\beta)F - \beta \lambda \tilde \varphi}u^{2p-1}\langle \partial u, \partial F \rangle_{\omega_0} \omega_0^n,
\end{aligned}
\end{equation}
where $\langle \partial \tilde \varphi,  \partial F \rangle_{\omega_0} = g_{0}^{i \bar j} \tilde \varphi_{i} F_{\bar j}$, and $\langle \partial u,  \partial F \rangle_{\omega_0} = g_{0}^{i \bar j} u_{i} F_{\bar j}$ in coordinates. Notice that by Young's inequality
\begin{equation}
    \begin{aligned}
    |u^{2p-1}\langle \partial  u,  \partial F  \rangle_{\omega_0}| \leq \frac{1}{2}u^{2p}|\partial F|_{\omega_0}^2 + \frac{1}{2} u^{2p-2} |\partial u|_{\omega_0}^2,
    \end{aligned}
\end{equation}
and
\begin{equation}
    \begin{aligned}
    &|\beta \lambda e^{(1-\beta)F - \lambda \beta \tilde \varphi}u^{2p}\langle  \partial \tilde  \varphi,  \partial F \rangle_{\omega_0} |\\
    & \leq |\beta \lambda e^{(1-\beta)F - \lambda \beta \tilde \varphi}u^{2p}\langle \partial  \varphi,  \partial F \rangle_{\omega_0}| + | \delta' \beta \lambda e^{(1-\beta)F - \lambda \beta \tilde \varphi}u^{2p}\langle \partial \psi,  \partial F \rangle_{\omega_0} |\\
    & \leq \frac{(\beta - 1)}{2}u^{2p}|\partial F|_{\omega_0}^2 e^{(1-\beta)F -  \lambda \beta \tilde \varphi } + |\partial \varphi|_{\omega_0}^2 \frac{\beta^2 \lambda^2}{(\beta -1)} u^{2p}e^{(1-\beta)F - \lambda \beta \tilde \varphi} \\
    &+ \delta'^{2} |\partial \psi|_{\omega_0}^2 \frac{\beta^2 \lambda^2}{(\beta -1)} u^{2p}e^{(1-\beta)F - \lambda \beta \tilde \varphi}.
    \end{aligned}
\end{equation}
Then substitute this back into (\ref{integration}), we get
\begin{equation} \label{furtherintegration}
    \begin{aligned}
    &\int_{M}e^{(1- \beta)F - \beta \lambda \tilde \varphi}u^{2p} \Delta_{\omega_0} F \omega_0^n \\
      &\geq -  \int_{M}|\partial \psi|_{\omega_0}^2 \frac{\delta'^{2} \beta^2 \lambda^2}{(\beta - 1)} e^{(1-\beta)F - \lambda \beta \tilde \varphi} u^{2p} \omega_0^n - \int_{M} pe^{(1-\beta)F - \lambda \beta \tilde \varphi} u^{2p-2}|\partial u|_{\omega_0}^2 \omega_0^n\\
    &+ \int_{M} (\frac{\beta - 1}{2} - p)e^{(1-\beta) F - \beta \lambda \tilde \varphi} u^{2p}|\partial F|_{\omega_0}^2 \omega_0^n \\
    &- \int_{M}|\partial \varphi|_{\omega_0}^2 \frac{\beta^2 \lambda^2}{(\beta - 1)} e^{(1-\beta)F - \lambda \beta \tilde \varphi} u^{2p} \omega_0^n.
    \end{aligned}
\end{equation}
Substituting (\ref{furtherintegration}) back into (\ref{firstintegration}), we conclude

\begin{equation}
    \begin{aligned}
            &\int_{M} pu^{2p-2}|\partial u|_{\omega_0}^2e^{(1-\beta) F - \lambda \beta \tilde \varphi} \omega_0^n +\int_{M} (\frac{\beta - 1}{2} - p)e^{(1-\beta) F - \beta \lambda \tilde \varphi} u^{2p}|\partial F|_{\omega_0}^2 \omega_0^n  \\
            &+ \int_{M} u^{2p}e^{-(\beta - \frac{n-2}{n-1} ) F- \beta \lambda \tilde \varphi}(C_{\delta'} \beta \lambda  - C_{1})(\text{tr}_{\omega_0}\omega_{\varphi})^{1+\frac{1}{n-1}} \omega_0^n  \\
    &\leq \int_{M}|\partial \varphi|_{\omega_0}^2 \frac{\beta^2 \lambda^2}{(\beta - 1)} e^{(1-\beta)F - \lambda \beta   \tilde \varphi} u^{2p} \omega_0^n  + \int_{M}|\partial \psi|_{\omega_0}^2 \frac{\delta'^2\beta^2 \lambda^2}{(\beta - 1)}e^{(1-\beta)F - \lambda \beta   \tilde \varphi} u^{2p} \omega_0^n\\
    &+\int_{M} \beta(\lambda n - \underline{R}) \text{tr}_{\omega_0}\omega_{\varphi}e^{(1-\beta)F - \beta \lambda \tilde \varphi}u^{2p} \omega_0^n.
    \end{aligned}
\end{equation}
Now choose $\beta > 2p+1$, and recall that $\lambda = \max( \frac{2C_{1} + 2}{C_{\delta'}},\frac{q}{2 \delta'} + 1, \frac{1}{2 \delta'} + 1)$, where $q$ is specified in Theorem \ref{gradientbound222}, so that $C_{\delta'} \lambda \beta \geq (2C_{1} + 2) \beta > 2C_{1} + 1$. Because $\text{tr}_{\omega_0} \omega_{\varphi}  \geq   n e^{\frac{F}{n}}$ by the arithmetic-geometric means inequality, we have

\begin{equation} \label{maintool2}
    \begin{aligned}
    & \int_{M} u^{2p}e^{-(\beta - \frac{n-2}{n-1} ) F- \beta \lambda \tilde \varphi}(\text{tr}_{\omega_0}\omega_{\varphi})^{1+\frac{1}{n-1}} \omega_0^n  \\
    &\leq C \int_{M} \text{tr}_{\omega_0}\omega_{\varphi}e^{(1-\beta)F - \beta \lambda \tilde \varphi}u^{2p} \omega_0^n + C \int_{M} u^{2p} e^{(1-\beta) F - \beta \lambda \tilde \varphi} \omega_0^n\\
    &+C \int_{M}|\partial \varphi|_{\omega_0}^2 \frac{\beta^2 }{\beta - 1} e^{(1-\beta)F - \lambda \beta \tilde \varphi} u^{2p} \omega_0^n + C \int_{M}|\partial \psi|_{\omega_0}^2 \frac{\delta'^{2} \beta^2 }{\beta - 1} e^{(1-\beta)F - \lambda \beta \tilde \varphi} u^{2p} \omega_0^n.
    \end{aligned}
\end{equation}
For $p = 0$, $\beta = 2$  we have
\begin{equation}
    \begin{aligned}
    & \int_{M} e^{2\lambda \delta' \psi}(\text{tr}_{\omega_0} \omega_{\varphi})^{1+\frac{1}{n-1}} \omega_0^n \\ 
    &\leq C \int_{M} e^{ - (2 - \frac{n-2}{n-1}) F- 2 \lambda \tilde \varphi}(\text{tr}_{\omega_0}\omega_{\varphi})^{1+\frac{1}{n-1}} \omega_0^n \\
    &\leq C \int_{M} \text{tr}_{\omega_0}\omega_{\varphi}e^{ 2 \lambda \delta' \psi} \omega_0^n + C \int_{M} e^{2\lambda \delta' \psi } \omega_0^n\\
    &+C\int_{M}|\partial \varphi|_{\omega_0}^2 e^{-2\lambda \varphi + 2\lambda \delta' \psi} \omega_0^n +C\int_{M}|\partial \psi|_{\omega_0}^2 e^{-2\lambda \varphi + 2\lambda \delta' \psi} \omega_0^n 
    \\
    &\leq C \int_{M} \text{tr}_{\omega_0} \omega_{\varphi} \omega_0^n + C + C \int_{M} |\partial \varphi|_{\omega_0}^2 e^{2\lambda \delta' \psi} \omega_0^n + C \int_{M} |\partial \psi|_{\omega_0}^2 e^{2\lambda \delta' \psi} \omega_0^n,
\end{aligned}
 \end{equation}
where we used the fact that $\varphi$ and $F$ are bounded. Also notice that 
\begin{equation}
\int_{M} \text{tr}_{\omega_0} \omega_{\varphi} \omega_0^n = \int_{M} \text{tr}_{\omega_0} \omega \omega_0^n + \int_{M} \Delta_{\omega_0} \varphi  \omega_0^n \leq C.
\end{equation}
For the last two terms
\begin{equation}
\begin{aligned}
\int_{M} |\partial \varphi|_{\omega_0}^2 e^{2\lambda \delta' \psi} \omega_0^n &\leq C \int_{M} |s|_{h}^{4\lambda \delta' - 2q}\omega_0^n \leq C,
\end{aligned}
\end{equation}
and
\begin{equation}
\begin{aligned}
\int_{M} |\partial \psi|_{\omega_0}^2 e^{2\lambda \delta' \psi} \omega_0^n &\leq C \int_{M} |s|_{h}^{4\lambda \delta' - 2}\omega_0^n \leq C.
\end{aligned}
\end{equation}
Recall that we choose $\lambda > \max\{\frac{q}{2\delta'}, \frac{1}{2\delta'}\}$, so we can bound the last two terms since $|s|_{h}^2$ is bounded. As a result, we obtained a bound for $\int_{M} e^{2 \lambda \delta' \psi}(\text{tr}_{\omega_0} \omega_{\varphi})^{\frac{n}{n-1}}\omega_0^n$. Now it suffices to show that for each given $p > 0$, $\int_{M}(\text{tr}_{\omega_0} \omega_{\varphi})^p e^{\lambda (2p+2)  (\delta'(p-1) + 1) \psi} \omega_0^n$ (recall that $\delta' < \frac{1}{2}$, so that $\delta'(p - 1) + 1 > \delta'$ when $p = \frac{n}{n-1}$) being bounded above implies that for $p +  \frac{1}{n-1}$, $\int_{M}(\text{tr}_{\omega_0} \omega_{\varphi})^{p +  \frac{1}{n-1}} e^{\lambda (2p +2) (\delta'(p + \frac{1}{n-1} -1) + 1) \psi} \omega_0^n$ is bounded above, then we are done by induction. Given $p > 1$, let $\tilde p = \frac{p-1}{2}$, $\beta = 2p + 2 > 2 \tilde p + 1$ and take $\tilde p, \beta$ into our inequality (\ref{maintool2}), we get
\begin{equation}
    \begin{aligned}
    &\int_{M}  e^{\lambda (2p +2) (\delta'(p + \frac{1}{n+1}-1) + 1 ) \psi} (\mathrm{tr}_{\omega_0} \omega_{\varphi})^{p + \frac{1}{n-1}} \omega_0^n\\ 
    & \leq C \int_{M}e^{-(2p+2 - \frac{n-2}{n-1} ) F- (2p + 2) \lambda \tilde \varphi - 2 \tilde p (2p+2) B}(\text{tr}_{\omega_0}\omega_{\varphi})^{p +\frac{1}{n-1}} \omega_0^n  \\
    &\leq C \int_{M} e^{\lambda (2p+2) (\delta'(p - 1) + 1)\psi} (\text{tr}_{\omega_0} \omega_{\varphi})^{p} \omega_0^n\\
    &+ C \int_{M} e^{\lambda (2p + 2) ( \delta'(p-1) + 1) \psi} (\text{tr}_{\omega_0} \omega_{\varphi})^{p - 1}  \omega_0^n\\
    &+C \int_{M}|\partial \varphi|_{\omega_0}^2  e^{\lambda (2p+2)  (\delta'(p-1) + 1) \psi} (\text{tr}_{\omega_0} \omega_{\varphi})^{p - 1} \omega_0^n\\
    &+ C \int_{M}|\partial \psi|_{\omega_0}^2 e^{\lambda (2p+2)  (\delta'(p-1) + 1) \psi} (\text{tr}_{\omega_0} \omega_{\varphi})^{p - 1} \omega_0^n.
    \end{aligned}
\end{equation}
The first two terms on the right hand side are bounded due to the inductive hypothesis and the $0$-th order bound on $F$ and $\varphi$. For the last two terms we can use the same idea as in the case where $p = 0, \beta = 2$, to bound them. Then we are done by induction. We now need the following proposition from \cite{1712.06697} to show that $\varphi_{\varepsilon}$ is locally uniformly smoothly bounded away from $E$.

\begin{Prop} \label{higherorder}
Let $K$ be a compact subset of $M$, suppose that $\mathrm{tr}_{\omega_0} \omega_{\varphi_{\varepsilon}} \in L^{p}(K)$, and $\mathrm{tr}_{\omega_{\varphi_{\varepsilon}}} \omega_0 \in L^{p}(K)$ for some $p > 3n(n-1)$, then for any $m \in \mathbb N$, $\varphi_{\varepsilon} \in C^{m}(K, \omega_0)$ where the bound only depends on $p$,  $\|\mathrm{tr}_{\omega_{\varphi_{\varepsilon}}} \omega_0\|_{L^{p}(K)}$, $\|\mathrm{tr}_{\omega_0} \omega_{\varphi_{\varepsilon}} \|_{L^{p}(K)}$, $\|\varphi_{\varepsilon}\|_{C^{0}(K)}, m$, and $(M, \omega_0)$.
\end{Prop}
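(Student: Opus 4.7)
The strategy is to combine the $L^p$ information on the two traces with linear elliptic theory applied to the scalar-curvature equation in (\ref{maineq}), and then to bootstrap through the complex Monge--Amp\`ere equation. Throughout I suppress $\varepsilon$ and argue on a decreasing sequence of compact subsets $K \Supset K' \Supset K'' \Supset K'''$ of the interior of $K$.

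First I would exploit the pointwise identity $n + \Delta_{\omega_0}\varphi = \mathrm{tr}_{\omega_0}\omega_\varphi$ together with Calder\'on--Zygmund theory to upgrade the $L^p$ bound on $\mathrm{tr}_{\omega_0}\omega_\varphi$ to a $W^{2,p}(K')$ bound on $\varphi$; since $p > 3n(n-1) > 2n$, Sobolev embedding then puts $\varphi$ in $C^{1,\alpha}(K')$ for some $\alpha>0$. At the same time, because $e^F = \det(g_\varphi)/\det(g_0)$ and both the determinant and its reciprocal are controlled by the $n$-th power of the corresponding traces via AM--GM, one obtains an $L^{p/n}(K')$ bound on $F$; and the complex cofactor $\det(g_\varphi)\cdot g_\varphi^{i\bar j}$, being a polynomial of degree $n-1$ in the entries of $g_\varphi$, is bounded in $L^{p/(n-1)}(K')$.

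Next I would extract H\"older regularity of $F$. Testing the scalar-curvature equation $\Delta_{\omega_\varphi} F = -\underline{R} - \mathrm{tr}_{\omega_\varphi}\eta$ against a compactly supported cutoff and integrating by parts with respect to the Monge--Amp\`ere volume $\omega_\varphi^n = e^F \omega_0^n$ recasts it in the divergence form
\[
\partial_{\bar j}\bigl(\det(g_\varphi)\, g_\varphi^{i\bar j}\, \partial_i F\bigr) \,=\, \det(g_\varphi)\bigl(-\underline{R} - g_\varphi^{i\bar j}\eta_{i\bar j}\bigr),
\]
whose leading coefficients lie in $L^{p/(n-1)}(K')$. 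Since $p/(n-1) > 3n$ exceeds the critical Sobolev exponent on a real $2n$-dimensional manifold, the De Giorgi--Nash--Moser machinery, implemented essentially as in Chen--Cheng \cite{1712.06697}, raises $F$ first to $L^\infty(K'')$ and then to $C^{0,\alpha'}(K'')$ for some $\alpha' > 0$.

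With $F \in C^{0,\alpha'}(K'')$, the complex Monge--Amp\`ere equation $\omega_\varphi^n = e^F\omega_0^n$ has H\"older right-hand side, so the Caffarelli--Evans--Krylov theory yields $\varphi \in C^{2,\alpha'}(K''')$. The linear equation for $F$ above then has H\"older coefficients, so Schauder estimates raise $F$ to $C^{2,\alpha'}$; alternating between the two equations is the classical elliptic bootstrap that reaches $\varphi \in C^m(K''')$ for every $m \in \mathbb{N}$. The main obstacle is the H\"older step for $F$: the divergence-form operator only has $L^{p/(n-1)}$ coefficients, and it is precisely here that the quantitative threshold $p > 3n(n-1)$ enters; once this Hölder bound is established, the remainder is routine elliptic bootstrap.
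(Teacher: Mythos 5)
The paper does not actually prove this proposition; it is imported verbatim from Chen--Cheng \cite{1712.06697} (their higher-order regularity result) as a black box. So there is no ``paper's proof'' to compare against --- what you have written is an attempted reconstruction of the Chen--Cheng argument, and at the level of a sketch it identifies the right ingredients: the linear equation $\mathrm{tr}_{\omega_0}\omega_{\varepsilon}+\Delta_{\omega_0}\varphi=\mathrm{tr}_{\omega_0}\omega_{\varphi}$ plus Calder\'on--Zygmund to get $\varphi\in W^{2,p}_{\mathrm{loc}}\hookrightarrow C^{1,\alpha}$, the cofactor-form rewriting of $\Delta_{\omega_\varphi}F$, H\"older continuity of $F$ via iteration, and then Caffarelli/Evans--Krylov and Schauder bootstrapping. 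Two technical points in the middle of your sketch deserve flagging. First, the identity $\partial_{\bar j}\bigl(\det(g_\varphi)g_\varphi^{i\bar j}\partial_iF\bigr)=\det(g_\varphi)\Delta_{\omega_\varphi}F$ is exact only because the cofactor of a \emph{Hessian} is divergence-free; here $g_\varphi=g_0+\varphi_{i\bar j}$, so in holomorphic normal coordinates one picks up lower-order error terms involving $\nabla g_0$ that must be absorbed. Second, and more importantly, the divergence-form operator is not uniformly elliptic: its eigenvalues are $e^F/\lambda_i$, so the ellipticity ratio is controlled only by $(\mathrm{tr}_{\omega_0}\omega_\varphi)(\mathrm{tr}_{\omega_\varphi}\omega_0)$, which is merely in $L^{p/2}_{\mathrm{loc}}$. (This is in fact where the second hypothesis $\mathrm{tr}_{\omega_\varphi}\omega_0\in L^p$ enters --- your write-up uses it only implicitly.) Standard De Giorgi--Nash--Moser therefore does not apply off the shelf; one needs the $L^p$-ellipticity refinements of that machinery, and the requirement that the exponents close up under the Moser iteration is exactly where the threshold $p>3n(n-1)$ comes from.

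The upshot is that your appeal to ``the De Giorgi--Nash--Moser machinery, implemented essentially as in Chen--Cheng'' black-boxes precisely the step that makes this proposition nontrivial, and since Chen--Cheng is the paper being cited for the proposition in the first place, that step is circular as written. If the aim is just to explain why the claim is plausible, your outline is fine; if the aim is a self-contained proof, the content of the degenerate Moser iteration (the Sobolev--H\"older chain with $L^{p/(n-1)}$ upper and $L^{p/n}$-type lower coefficient bounds) needs to be spelled out, together with the error terms from the non-Euclidean background metric.
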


\begin{Cor}
Given any compact subset $K$ of $M \backslash E$, $\varphi_{\varepsilon}$ is uniformly bounded in $C^{\infty}(K)$. 
\end{Cor}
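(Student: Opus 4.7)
The plan is to combine Theorem \ref{lpbound} with Proposition \ref{higherorder}, exploiting the fact that on a compact subset $K$ disjoint from $E$ the weight $|s|_h^2$ is bounded below. More concretely, I would proceed as follows.

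First, fix a compact $K \subset M \setminus E$. Since $s$ vanishes precisely on $E$, the quantity $|s|_h^2$ is bounded away from $0$ on $K$, and hence $e^{\gamma(p) \psi} = |s|_h^{2\gamma(p)}$ is bounded below by a positive constant depending only on $K$ and the chosen Hermitian metric $h$. Thus for any $p > 0$, Theorem \ref{lpbound} immediately yields
\begin{equation*}
  \int_{K} (\mathrm{tr}_{\omega_0} \omega_{\varphi_{\varepsilon}})^{p} \, \omega_0^n \leq C(K, p, \|\varphi_{\varepsilon}\|_{C^0}, \|F_{\varepsilon}\|_{C^0}),
\end{equation*}
which is uniform in $\varepsilon$ by the already established $C^0$ bounds on $\varphi_{\varepsilon}$ and $F_{\varepsilon}$.

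Next, I need a matching $L^p(K)$ bound on $\mathrm{tr}_{\omega_{\varphi_{\varepsilon}}} \omega_0$. This comes from the algebraic identity (valid in any normal coordinate diagonalizing $g_{\varphi}$)
\begin{equation*}
  \mathrm{tr}_{\omega_{\varphi_{\varepsilon}}} \omega_0 \leq \frac{1}{(n-1)!} (\mathrm{tr}_{\omega_0} \omega_{\varphi_{\varepsilon}})^{n-1} \cdot \frac{\omega_0^n}{\omega_{\varphi_{\varepsilon}}^n} = C (\mathrm{tr}_{\omega_0} \omega_{\varphi_{\varepsilon}})^{n-1} e^{-F_{\varepsilon}},
\end{equation*}
together with the uniform lower bound on $F_{\varepsilon}$ from Theorem \ref{boundF}. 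Hence an $L^{(n-1)p}(K)$ bound on $\mathrm{tr}_{\omega_0} \omega_{\varphi_{\varepsilon}}$ yields an $L^{p}(K)$ bound on $\mathrm{tr}_{\omega_{\varphi_{\varepsilon}}} \omega_0$.

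Finally, choose $p$ large enough that $p > 3n(n-1)$, and apply the previous steps first with exponent $p$ and then with exponent $(n-1)p$ to conclude that both $\mathrm{tr}_{\omega_0} \omega_{\varphi_{\varepsilon}}$ and $\mathrm{tr}_{\omega_{\varphi_{\varepsilon}}} \omega_0$ are bounded in $L^p(K')$ uniformly in $\varepsilon$ for any compact $K' \Subset M \setminus E$ containing $K$ in its interior. Proposition \ref{higherorder} then gives, for any $m \in \mathbb N$, a uniform bound on $\|\varphi_{\varepsilon}\|_{C^m(K)}$ depending only on $m$, $p$, $(M, \omega_0)$, and the previously established $C^0$ and $L^p$ quantities. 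Since this holds for every $m$, $\varphi_{\varepsilon}$ is uniformly bounded in $C^{\infty}(K)$, as claimed. No step is truly an obstacle here — the work was done in Theorems \ref{mainprop}, \ref{boundF} and \ref{lpbound}, and the corollary is essentially bookkeeping given Proposition \ref{higherorder}.
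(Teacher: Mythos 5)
Your proof is correct and follows essentially the same route as the paper: use the lower bound on $|s|_h^2$ on $K$ to turn Theorem~\ref{lpbound} into an unweighted $L^p(K)$ bound on $\mathrm{tr}_{\omega_0}\omega_{\varphi_\varepsilon}$, convert this (via the identity $\mathrm{tr}_{\omega_{\varphi}}\omega_0 \le e^{-F}(\mathrm{tr}_{\omega_0}\omega_{\varphi})^{n-1}$ and the uniform bound on $F$) into an $L^p(K)$ bound on $\mathrm{tr}_{\omega_{\varphi_\varepsilon}}\omega_0$, and then feed both into Proposition~\ref{higherorder}. The factor $\tfrac{1}{(n-1)!}$ you write in the algebraic step is not quite right, but you immediately absorb it into a constant $C$, so this is harmless.
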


\begin{proof}
 Given a compact subset $K \subset  M \backslash E$, the $L^{p}(K)$ bound on $\mathrm{tr}_{\omega_0}\omega_{\varphi_{\varepsilon}}$ is directly by Theorem \ref{lpbound}. Furthermore, notice that

\begin{equation} \label{xuyaozhege}
    \text{tr}_{\omega_{\varphi_{\varepsilon}}} \omega_0 = \sum_{i} \frac{1}{(g_{\varphi_{\varepsilon}})_{i \bar i}} = \sum_{i} \frac{1}{\prod_{m} (g_{\varphi_{\varepsilon}})_{m \bar m}} \prod_{j \neq i}(g_{\varphi_{\varepsilon}})_{j \bar j} = e^{-F} \prod_{j \neq i}(g_{\varphi_{\varepsilon}})_{j \bar j} \leq e^{-F}(\text{tr}_{\omega_0} \omega_{\varphi_{\varepsilon}})^{n-1}.
\end{equation}
So we are able to obtain the $L^p(K)$ bound on $\text{tr}_{\omega_{\varphi}} \omega_0$ by using the $C^{0}(M)$ bound on $F$ and Theorem \ref{lpbound} again. We conclude the proof by recalling that we showed that $\|\varphi_{\varepsilon}\|_{C^{0}(M)}$ is uniformly bounded.
\end{proof}

\section{Uniform upper bound on the Mabuchi energy and the entropy} \label{section5}
All the estimates we proved in the previous sections depend on the entropy
\begin{equation}
   \int_{M} \log \frac{\omega_{\varphi_{\varepsilon}}^n}{\omega_0^n} \frac{\omega_{\varphi_{\varepsilon}}^n}{n!}, 
\end{equation}
thus in order to obtain uniform estimates, it is essential to show that the entropies are uniformly bounded from above independent of $\varepsilon$. To achieve that, we need to study the Mabuchi energy which is closely related to the entropy. Recall that $\omega_{\varepsilon} = \eta + \varepsilon \omega_0$, and we will continue suppressing $\varepsilon$ for simplicity of notation. The Mabuchi energy is defined to be the functional
\begin{equation}
    \mathcal{M}_{\omega}(\theta) = \int_{M} \log \frac{\omega_{\theta}^n}{\omega^n} \frac{\omega_{\theta}^n}{n!} + J_{-\text{Ric}(\omega)}(\theta)
\end{equation}
where $\theta \in \mathcal{H}_{\omega}:=\{ \theta \in C^{\infty}(M) | \omega + \sqrt{-1} \partial \bar \partial \theta > 0, \sup_{M} \theta = 0 \}$. For any real $(1,1)$ form $\chi$, $J_{\chi}$ is a functional on $\mathcal{H}_{\omega}$ defined through its variation
\begin{equation}
    \frac{dJ_{\chi}}{dt} = \int_{M} \partial_{t} \theta(\text{tr}_{\omega_{\theta}}\chi -\underline{\chi}) \frac{\omega_{\theta}^n}{n!}
\end{equation}
where $\underline{\chi} = n \frac{[\chi] \cdot [\omega]^{n - 1}}{[\omega]^{n}}$. Following Chen \cite{MR1772078} (see also \cite{1801.00656} section 2.1), we have the following explicit formula for  $J_{\chi}$
\begin{equation}
    \begin{aligned}
    J_{\chi}(\theta) = -\frac{\underline{\chi}}{(n+1)!}\int_{M} \theta(\sum_{i = 0}^n \omega^{i} \wedge \omega_{\theta}^{n-i}) + \frac{1}{n!}\int_{M} \theta \chi \wedge (\sum_{i = 0}^{n-1}  \omega^i \wedge \omega_{\theta}^{n-1-i}), 
    \end{aligned}
\end{equation}
so $J_{\chi}(0) = 0$. Furthermore, the variation of the Mabuchi functional is given by
\begin{equation}
    \frac{d\mathcal{M}_{\omega}}{dt} = \int_{M} \frac{\partial \theta}{\partial t}(-\text{tr}_{\omega_{\theta}}\text{Ric}( \omega_{\theta})  + \underline{\text{Ric}(\omega})) \frac{ \omega_{\theta}^n}{n!}.
\end{equation}
The first term appearing in the Mabuchi functional is the same as what we referred to as the entropy term before except that the leading term in the integral has its denominator as $\omega^n$ while ours is $\omega_0^n$. Thus, we need to adjust the Mabuchi energy slighly to match our entropy term. We define our modified Mabuchi energy to be
\begin{equation}
    E_{\omega}(\theta) =  \int_{M} \log \frac{\omega_{\theta}^n}{\omega_0^n} \frac{\omega_{\theta}^n}{n!} +J_{\eta}( \theta).
\end{equation}
It turns out that $E_{\omega}$ only differs from $\mathcal{M}_{\omega}$ by a term equal to $\int_{M} \log \frac{\omega^n}{\omega_0^n} \frac{\omega^n}{n!}$. Observe that
\begin{equation}
    \frac{d}{dt} \int_{M} \log \frac{\omega_{\theta}^n}{\omega_0^n} \frac{\omega_{\theta}^n}{n!} = \int_{M}\frac{\partial \theta}{\partial t}(-\text{tr}_{ \omega_{\theta}}\eta - \text{tr}_{ \omega_{\theta}}\text{Ric}(\omega_{\theta} )) \frac{\omega_{\theta}^n}{n!}.
\end{equation}
So we know that 
\begin{equation}
      \frac{d E_{\omega}}{dt} = \int_{M} \frac{\partial \theta}{\partial t}(-\text{tr}_{\omega_{\theta}}\text{Ric}( \omega_{\theta})  - \underline{\eta}) \frac{ \omega_{\theta}^n}{n!}
    = \int_{M} \frac{\partial \theta}{\partial t}(-\text{tr}_{\omega_{\theta}}\text{Ric}( \omega_{\theta})  + \underline{\text{Ric}(\omega})) \frac{ \omega_{\theta}^n}{n!}.\\
\end{equation}
Thus the variation of $E_{\omega}$ is actually the same as that of the usual Mabuchi energy $\mathcal{M}_{\omega}$. We know that the potentials of cscK metrics are minimizers of $\mathcal{M}_{\omega}$ and we want to show that the same is true for $E_{\omega}$. We have just showed that $\frac{d}{dt} \mathcal{M}_{\omega}= \frac{d}{dt} E_{\omega}$. Taking a path $t \theta$ in $\mathcal{H}_{\omega}$, we have
\begin{equation}
    \mathcal{M}_{\omega}(\theta) = \int_{0}^{1} \frac{d}{dt}E_{\omega} dt + \mathcal{M}_{\omega}(0) = E_{\omega}(\theta) - \int_{M} \log \frac{\omega^n}{\omega_0^n} \frac{\omega^n}{n!}.
\end{equation}
We know that
\begin{equation}
    \mathcal{M}_{\omega}(\theta) - \mathcal{M}_{\omega}(\varphi) \geq 0
\end{equation}
for all $\theta \in \mathcal{H}_{\omega}$ and $\varphi$ such that $\omega_{\varphi} = \omega + \sqrt{-1} \partial \bar \partial \varphi$ is a cscK metric thus,
\begin{equation}
    E_{\omega}(\theta) -  E_{\omega}(\varphi) = \mathcal{M}_{\omega}(\theta) - \mathcal{M}_{\omega}(\varphi) \geq 0.
\end{equation}
So we reach the conclusion that the potentials of cscK metrics are still minimizers of $E_{\omega}$. One immediate consequence is the following lemma.
\begin{Lemma} \label{firstlemma}
Let $\varphi_{\varepsilon} \in \mathcal{H}_{\omega}$ be a smooth real-valued function such that $\omega_{\varphi_{\varepsilon}} = \omega_{\varepsilon} + \sqrt{-1} \partial \bar \partial \varphi_{\varepsilon}$ is cscK, then

\begin{equation}
E_{\omega_{\varepsilon}}(\varphi_{\varepsilon}) \leq E_{\omega_{\varepsilon}}(0) = \int_{M} \log \frac{ \omega_{\varepsilon}^n}{\omega_0^n}\frac{\omega_{\varepsilon}^n}{n!} \leq C
\end{equation}
where $C$ is independent of $\varepsilon$.
\end{Lemma}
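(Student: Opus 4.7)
The plan is to verify the two equalities and the two inequalities appearing in the statement sequentially, reducing everything to what has already been set up.

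First, the middle equality $E_{\omega_{\varepsilon}}(0) = \int_{M} \log \frac{\omega_{\varepsilon}^{n}}{\omega_{0}^{n}}\frac{\omega_{\varepsilon}^{n}}{n!}$ is immediate from the definition $E_{\omega_{\varepsilon}}(\theta) = \int_{M}\log\frac{\omega_{\theta}^{n}}{\omega_{0}^{n}}\frac{\omega_{\theta}^{n}}{n!}+J_{\eta}(\theta)$ together with the normalization $J_{\eta}(0)=0$ which follows from the explicit Chen-type formula displayed in the paragraph preceding the lemma. Second, the inequality $E_{\omega_{\varepsilon}}(\varphi_{\varepsilon}) \leq E_{\omega_{\varepsilon}}(0)$ follows directly from the discussion carried out right above the lemma: we showed that $E_{\omega_{\varepsilon}}$ and $\mathcal{M}_{\omega_{\varepsilon}}$ differ only by the $\varepsilon$-dependent constant $\int_{M}\log\frac{\omega_{\varepsilon}^{n}}{\omega_{0}^{n}}\frac{\omega_{\varepsilon}^{n}}{n!}$, hence share minimizers, and $\varphi_{\varepsilon}$ is a cscK potential (so minimizes $\mathcal{M}_{\omega_{\varepsilon}}$ on $\mathcal{H}_{\omega_{\varepsilon}}$ by Berman--Darvas--Lu \cite{1602.03114} or Chen--Cheng \cite{1801.00656}); evaluating at the element $0 \in \mathcal{H}_{\omega_{\varepsilon}}$ gives the bound.

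The substantive point is the uniform upper bound on $\int_{M}\log\frac{\omega_{\varepsilon}^{n}}{\omega_{0}^{n}}\frac{\omega_{\varepsilon}^{n}}{n!}$. Set $f_{\varepsilon} := \omega_{\varepsilon}^{n}/\omega_{0}^{n}$, so the integral equals $\int_{M} f_{\varepsilon}\log f_{\varepsilon}\,\tfrac{\omega_{0}^{n}}{n!}$. Since $\eta\geq 0$ and $\varepsilon\leq 1$, we have the pointwise comparison $\omega_{\varepsilon} = \eta + \varepsilon\omega_{0} \leq \eta + \omega_{0}$, and therefore $f_{\varepsilon} \leq (\eta+\omega_{0})^{n}/\omega_{0}^{n} =: M_{0}$, where $M_{0}$ is a fixed smooth nonnegative function on $M$, in particular bounded. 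The function $x \mapsto x\log x$ on $(0,M_{0}]$ is bounded above by $\max\{0,M_{0}\log M_{0}\}$ (it is negative for $0 < x < 1$ and at most $M_{0}\log M_{0}$ for $1\leq x \leq M_{0}$). Integrating over the compact manifold $M$ against the fixed volume form $\omega_{0}^{n}/n!$ yields the desired $\varepsilon$-independent upper bound.

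The only step where one could worry is the very last one, since the log-integrand degenerates to $-\infty$ along $E$ where $\eta$ vanishes. However, we only need an upper bound, and the upper bound is governed entirely by the upper bound on $f_{\varepsilon}$; the degeneracy only pushes $f_{\varepsilon}\log f_{\varepsilon}$ toward zero from below (bounded by $-1/e$), which is harmless. So no subtle pluripotential-theoretic input is required here. Assembling the three parts completes the proof.
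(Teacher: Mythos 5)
Your proof is correct and follows exactly the route the paper intends: the first inequality and the middle equality are immediate from the preceding discussion (the paper merely states the lemma as ``an immediate consequence'' with no separate proof), and your explicit estimate of $\int_M f_\varepsilon \log f_\varepsilon\, \omega_0^n/n!$ via the pointwise bound $f_\varepsilon \leq (\eta+\omega_0)^n/\omega_0^n$ is the natural way to supply the uniform $C$ that the paper leaves implicit.
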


We are now ready to show that the entropies are uniformly bounded from above. One possible approach is to first establish a uniform lower bound on the $J$ functionals since we already know from Lemma \ref{firstlemma} that $E_{\omega}(\varphi)$ is uniformly bounded from above. However, this approach requires one to study the solutions of the $J$ equations assoicated with the $J$ functionals. For more details related to this route, see  \cite{MR3981128, MR2226957, MR3561959}. Instead of following this path, we turn to an approach introduced by Dervan\cite{AFST_2016_6_25_4_919_0} which  generates explicit bounds on the functionals involved. 

To begin with, we estimate the entropy term in the Mabuchi energy using the $\alpha$-invariant, and this technique should be well-known to the experts in this field (see for example Tian \cite{MR1787650}).
\begin{Lemma} \label{xiajie}
for all $\varepsilon \in [0, 1]$, and $\alpha$ such that $\alpha > 0$ and $\alpha < \alpha(M, [\eta + \omega_0])$, we have
\begin{equation}
    \int_{M}\log \frac{\omega_{\theta}^n}{\omega_0^n}\frac{\omega_{\theta}^n}{n!} \geq - \alpha \int_{M} \theta \frac{\omega_{\theta}^n}{n!} - C
\end{equation}
where $C$ is a positive constant independent of $\varepsilon$.
\end{Lemma}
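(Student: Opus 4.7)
The plan is to combine the elementary pointwise bound $x \log x \geq -1/e$ (valid for all $x > 0$) with the uniform $\alpha$-invariant estimate provided by Lemma~\ref{alphainvariant}; this is essentially the trick due to Tian (cf.\ \cite{MR1787650}).

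First, I would move the linear term across and combine the two integrands into a single logarithm:
\[
\int_{M}\log \frac{\omega_{\theta}^n}{\omega_0^n}\frac{\omega_{\theta}^n}{n!} + \alpha \int_{M}\theta \frac{\omega_{\theta}^n}{n!} = \int_{M}\log\!\left(e^{\alpha\theta}\frac{\omega_{\theta}^n}{\omega_0^n}\right)\frac{\omega_{\theta}^n}{n!}.
\]
Setting $f = e^{\alpha\theta}\,\omega_{\theta}^n/\omega_0^n > 0$ and observing that $\omega_{\theta}^n/n! = f\cdot e^{-\alpha\theta}\,\omega_0^n/n!$, the right-hand side becomes
\[
\int_{M} f\log f \cdot e^{-\alpha\theta}\frac{\omega_0^n}{n!}.
\]

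Applying the pointwise bound $f\log f \geq -1/e$ then gives
\[
\int_{M} f\log f\cdot e^{-\alpha\theta}\frac{\omega_0^n}{n!} \geq -\frac{1}{e}\int_{M} e^{-\alpha\theta}\frac{\omega_0^n}{n!}.
\]
Since $\theta \in \mathcal{H}_{\omega_{\varepsilon}} \subset \mathcal{H}_{\eta + \omega_0}$ for every $\varepsilon \in [0,1]$ (adding $\omega_0$ preserves strict positivity of a $(1,1)$-form), Lemma~\ref{alphainvariant} supplies a uniform upper bound on the last integral, independent of $\varepsilon$. Rearranging yields the claimed lower bound.

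The argument is short and I do not anticipate a substantive obstacle. The only conceptual point worth flagging is the uniformity in $\varepsilon$: the proof closes precisely because Lemma~\ref{alphainvariant} is tailored so that the same $\alpha$ works simultaneously for all classes $[\eta + \varepsilon\omega_0]$ with $\varepsilon \leq 1$, via the inclusion $\mathcal{H}_{\omega_{\varepsilon}} \subset \mathcal{H}_{\eta + \omega_0}$. Without such a uniform $\alpha$-invariant bound (which would fail at $\varepsilon = 0$ for the degenerating class $[\eta]$ alone), the constant $C$ would \emph{a priori} blow up as $\varepsilon \to 0$.
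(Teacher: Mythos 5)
Your proof is correct. You and the paper travel closely related routes through the same $\alpha$-invariant estimate (Lemma~\ref{alphainvariant}), which supplies the crucial uniformity in $\varepsilon$ you correctly flag as the crux of the matter. The difference is in the elementary device used to convert an exponential-integral bound into a bound on the entropy: the paper first observes that $\int_M e^{-\log(\omega_\theta^n/\omega_0^n) - \alpha\theta}\,\omega_\theta^n/n! = \int_M e^{-\alpha\theta}\,\omega_0^n/n! \leq C$, and then applies Jensen's inequality with respect to the probability measure proportional to $\omega_\theta^n$; you instead apply the pointwise inequality $x\log x \geq -1/e$ directly. Both are standard moves. Your variant is marginally cleaner in that it bypasses the $\varepsilon$-dependent volume $\int_M \omega_\varepsilon^n$ that enters the Jensen normalization (and is bounded only because $\varepsilon \leq 1$), so there is one fewer quantity to track; the paper's argument, on the other hand, is the one more commonly seen in this literature (cf.\ Tian). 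Either suffices.
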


\begin{proof}
First notice that
\begin{equation}
    \int_{M} e^{-\alpha \theta} \frac{\omega_0^n}{n!} = \int_{M} e^{-\log \frac{\omega_{\theta}^n}{\omega_0^n} - \alpha \theta} \frac{\omega_{\theta}^n}{n!} \leq C
\end{equation}
where we used Lemma \ref{alphainvariant} again. Then by Jensen's inequality, 
\begin{equation}
    \int_{M}(-\log \frac{\omega_{\theta}^n}{\omega_0^n} - \alpha \theta) \frac{\omega_{\theta}^n}{n!} \leq C \int_{M} \frac{\omega^n}{n!} \leq C
\end{equation}
where $ \int_{M} \frac{\omega^{n}}{n!}$ arises because we need a probability measure in order to apply Jensen's inequality. Thus,
\begin{equation}
    \int_{M}\log \frac{\omega_{\theta}^n}{\omega_0^n}\frac{\omega_{\theta}^n}{n!} \geq - \alpha \int_{M} \theta \frac{\omega_{\theta}^n}{n!} - C.
\end{equation}

\end{proof}
We will need the following lemmas due to \cite{AFST_2016_6_25_4_919_0}, and we include brief proofs of them here for the reader's convenience.
\begin{Lemma} \label{keylemma}
We have
\begin{equation}
    -n \int_{M} \theta \omega_{\theta}^n \geq - \sum_{i = 1}^{n} \int_{M} \theta \omega^{i} \wedge \omega_{\theta}^{n-i}.
\end{equation}
\end{Lemma}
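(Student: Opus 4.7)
The plan is to establish the inequality by a telescoping integration by parts argument, using the key identity $\omega = \omega_{\theta} - \sqrt{-1}\partial\bar\partial\theta$.

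First I would aim for the single-index estimate
\begin{equation*}
\int_M \theta\, \omega^i \wedge \omega_\theta^{n-i} \geq \int_M \theta\, \omega^{i-1} \wedge \omega_\theta^{n-i+1} \qquad \text{for each } 1 \leq i \leq n.
\end{equation*}
To prove this, I peel off one factor of $\omega$ and rewrite it as $\omega_\theta - \sqrt{-1}\partial\bar\partial\theta$, which gives
\begin{equation*}
\int_M \theta\, \omega^i \wedge \omega_\theta^{n-i} = \int_M \theta\, \omega^{i-1} \wedge \omega_\theta^{n-i+1} - \int_M \theta\, \sqrt{-1}\partial\bar\partial\theta \wedge \omega^{i-1} \wedge \omega_\theta^{n-i}.
\end{equation*}
Since $\omega^{i-1}\wedge\omega_\theta^{n-i}$ is $d$-closed, integration by parts turns the last term into $\int_M \sqrt{-1}\partial\theta \wedge \bar\partial\theta \wedge \omega^{i-1}\wedge\omega_\theta^{n-i}$, which is nonnegative because $\sqrt{-1}\partial\theta\wedge\bar\partial\theta$ is a nonnegative $(1,1)$-form and it is wedged with a nonnegative $(n-1,n-1)$-form (a product of K\"ahler forms).

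Once this step is in hand, iterating in $i$ gives
\begin{equation*}
\int_M \theta\, \omega^i \wedge \omega_\theta^{n-i} \geq \int_M \theta\, \omega_\theta^n
\end{equation*}
for every $1 \leq i \leq n$, and summing over $i=1,\ldots,n$ produces
\begin{equation*}
\sum_{i=1}^n \int_M \theta\, \omega^i \wedge \omega_\theta^{n-i} \geq n \int_M \theta\, \omega_\theta^n,
\end{equation*}
which after negation is exactly the claim. I expect no real obstacle here: the only subtlety is making sure the boundary terms vanish (no boundary since $M$ is compact) and that the closed form $\omega^{i-1}\wedge\omega_\theta^{n-i}$ is indeed a smooth closed nonnegative form so that integration by parts is legitimate and the remainder term has the correct sign.
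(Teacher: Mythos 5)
Your proposal is correct and is essentially the same argument as in the paper: both hinge on writing $\omega - \omega_\theta = -\sqrt{-1}\partial\bar\partial\theta$, integrating by parts against the closed positive form $\omega^{i-1}\wedge\omega_\theta^{n-i}$, and using nonnegativity of $\sqrt{-1}\partial\theta\wedge\bar\partial\theta$. The only cosmetic difference is that you lower one power of $\omega$ at a time and telescope, whereas the paper factors $\omega^i - \omega_\theta^i$ in one step; these are the same computation with different bookkeeping.
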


\begin{proof}
\begin{equation}
    \begin{aligned}
    -n \int_{M} \theta \omega_{\theta}^n + \sum_{i = 1}^{n} \int_{M} \theta \omega^i \wedge \omega_{\theta}^{n-i} &= \sum_{i=1}^{n} \int_{M} \theta \omega_{\theta}^{n-i} \wedge (\omega^{i} - \omega_{\theta}^i).
     \end{aligned}
\end{equation}
We only need to show that each summand is positive separately
\begin{equation}
    \begin{aligned}
    \int_{M} \theta \omega_{\theta}^{n-i} \wedge (\omega^i - \omega_{\theta}^{i}) & = \int_{M}\sqrt{-1} \partial \theta \wedge \bar \partial \theta \wedge \omega_{\theta}^{n-i} \wedge (\sum_{j = 1}^{i} \omega^{i-j} \wedge \omega_{\theta}^{j-1}) \geq 0.
    \end{aligned}
\end{equation}
\end{proof}

\begin{Lemma} \label{zhongyao}
We have
\begin{equation}
    - \frac{\underline{\eta}}{(n+1)!} \int_{M} \theta (\sum_{i = 0}^n \omega ^i \wedge \omega_{\theta}^{n-i})  \geq  - \frac{\underline{\eta}}{n! n}\int_{M} \theta \omega \wedge(\sum_{i = 0}^{n-1} \omega^i \wedge \omega_{\theta}^{n-i-1}).
\end{equation}
\end{Lemma}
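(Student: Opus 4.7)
The plan is to reduce the claimed inequality to Lemma \ref{keylemma} by a purely algebraic rearrangement, after reindexing the sum on the right-hand side.

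First I would rewrite the right-hand side by absorbing the external $\omega$ into the summation, using
\[
\omega \wedge \sum_{i=0}^{n-1} \omega^{i} \wedge \omega_{\theta}^{n-i-1} = \sum_{j=1}^{n} \omega^{j} \wedge \omega_{\theta}^{n-j}.
\]
So the claim becomes
\[
-\frac{\underline{\eta}}{(n+1)!} \int_{M} \theta \sum_{i=0}^{n} \omega^{i} \wedge \omega_{\theta}^{n-i} \;\geq\; -\frac{\underline{\eta}}{n!\,n} \int_{M} \theta \sum_{j=1}^{n} \omega^{j} \wedge \omega_{\theta}^{n-j}.
\]
Note $\underline{\eta} \geq 0$ because $\eta$ is semi-positive; if $\underline{\eta}=0$ both sides vanish, so we may assume $\underline{\eta}>0$.

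Next I would clear denominators: multiplying both sides by the positive quantity $\frac{n!\,n\,(n+1)}{\underline{\eta}}$ and then reversing the sign (both sides are non-positive since $\theta \leq 0$), the inequality is equivalent to
\[
n \int_{M} \theta \sum_{i=0}^{n} \omega^{i} \wedge \omega_{\theta}^{n-i} \;\leq\; (n+1) \int_{M} \theta \sum_{j=1}^{n} \omega^{j} \wedge \omega_{\theta}^{n-j}.
\]
Isolating the $i=0$ term on the left gives $n \int_{M} \theta\, \omega_{\theta}^{n}$, and after cancelling $n$ copies of the common sum $\sum_{j=1}^{n} \int_{M} \theta \, \omega^{j} \wedge \omega_{\theta}^{n-j}$ from each side, what remains is
\[
n \int_{M} \theta \, \omega_{\theta}^{n} \;\leq\; \int_{M} \theta \sum_{j=1}^{n} \omega^{j} \wedge \omega_{\theta}^{n-j},
\]
i.e.\ $-n \int_{M} \theta \, \omega_{\theta}^{n} \geq -\sum_{j=1}^{n} \int_{M} \theta \, \omega^{j} \wedge \omega_{\theta}^{n-j}$, which is the content of Lemma \ref{keylemma}.

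There is no real obstacle here beyond careful bookkeeping of signs and indices; the whole content of the lemma is Lemma \ref{keylemma} repackaged with the combinatorial weights $\frac{1}{(n+1)!}$ and $\frac{1}{n!\,n}$ that appear naturally in the explicit formula for $J_{\eta}$. The only point to double-check is that the manipulation preserves inequality direction: since $\theta \leq 0$ and both $\omega,\omega_{\theta}$ are positive forms, each of the integrals $\int_M \theta \, \omega^{i} \wedge \omega_{\theta}^{n-i}$ is non-positive, so combining with $\underline{\eta}>0$ the two sides of the claim are both non-negative, consistent with the direction of the inequality reduced to Lemma \ref{keylemma}.
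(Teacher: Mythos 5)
Your proof is correct and is essentially the same argument as the paper's: both reduce Lemma \ref{zhongyao} to Lemma \ref{keylemma} by absorbing the external $\omega$ into the sum (the identity $\omega \wedge \sum_{i=0}^{n-1}\omega^i\wedge\omega_\theta^{n-1-i}=\sum_{j=1}^n\omega^j\wedge\omega_\theta^{n-j}$) and then matching coefficients, the only difference being that the paper works forward from the left-hand side by peeling off the $i=0$ term while you normalize and cancel common terms. One small cosmetic remark: the parenthetical \emph{``reversing the sign (both sides are non-positive since $\theta\le 0$)''} might suggest the non-positivity is what justifies flipping the inequality, whereas in fact the flip is just multiplication by $-1$; the non-positivity of $\theta$ is never used in either proof and is only a sanity check.
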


\begin{proof}
We calculate
\begin{equation} 
    \begin{aligned}
    &- \frac{\underline{\eta}}{(n+1)!} \int_{M} \theta (\sum_{i = 0}^n \omega ^i \wedge \omega_{\theta}^{n-i}) \\
    &= - n \frac{ \underline{\eta}}{n(n+1)!} \int_{M} \theta \omega_{\theta}^{n} - \frac{\underline{\eta}}{(n+1)!} \int_{M} \theta \omega \wedge (\sum_{i = 0}^{n - 1} \omega \wedge \omega_{\theta}^{n - i - 1})\\
    &\geq -\frac{\underline{\eta}}{(n+1)!n} \int_{M} \sum_{i = 1}^{n} \theta \omega^{i} \wedge \omega_{\theta}^{n-i} - \frac{ \underline{\eta}}{(n+1)!} \int_{M} \theta \omega \wedge (\sum_{i = 0}^{n-1} \omega^i \wedge \omega_{\theta}^{n-i-1})\\
    &= - \frac{\underline{\eta}}{n! n}\int_{M} \theta \omega \wedge(\sum_{i = 0}^{n-1} \omega^i \wedge \omega_{\theta}^{n-i-1})
    \end{aligned}
\end{equation}
where for the inequality we used Lemma \ref{keylemma}.
\end{proof}

\begin{Th}
There exists $C$ > 0, such that
\begin{equation}
    \int_{M} \log \frac{\omega_{\varphi_{\epsilon}}^n}{\omega_0^n} \frac{\omega_0^n}{n!} \leq C.
\end{equation}
\end{Th}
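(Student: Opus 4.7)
The plan is to combine the minimization property of the modified Mabuchi energy $E_{\omega_\varepsilon}$ with the $\alpha$-invariant bound from Lemma \ref{xiajie} and a lower bound for $J_\eta(\varphi_\varepsilon)$ derived from Lemmas \ref{keylemma} and \ref{zhongyao}. From Lemma \ref{firstlemma} one has $E_{\omega_\varepsilon}(\varphi_\varepsilon) \leq E_{\omega_\varepsilon}(0) \leq C$ uniformly in $\varepsilon$, and since $E_{\omega_\varepsilon}(\varphi_\varepsilon)$ decomposes as the entropy plus $J_\eta(\varphi_\varepsilon)$, it suffices to show that the negative part of $J_\eta(\varphi_\varepsilon)$ can be dominated by the positive part of the entropy extracted from the $\alpha$-invariant estimate.

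For the $J_\eta$ lower bound, I would apply Lemma \ref{zhongyao} to obtain
\begin{equation*}
J_\eta(\varphi_\varepsilon) \geq \frac{1}{n!}\int_M \varphi_\varepsilon \left(\eta - \frac{\underline{\eta}}{n}\omega_\varepsilon\right) \wedge \sum_{i=0}^{n-1}\omega_\varepsilon^i\wedge \omega_{\varphi_\varepsilon}^{n-1-i},
\end{equation*}
and then rewrite $\eta - (\underline{\eta}/n)\omega_\varepsilon = (1-\underline{\eta}/n)\eta - (\underline{\eta}\varepsilon/n)\omega_0$. The cancellation identity (\ref{cancellation}), which crucially uses bigness of $[\eta]=-c_1(M)$, yields $\underline{\eta}/n = 1 + O(\varepsilon)$, so both coefficients are $O(\varepsilon)$. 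The $\omega_0$ piece contributes nonnegatively (since $\varphi_\varepsilon\leq 0$) and can be discarded as a lower bound; for the $\eta$ piece, I would use the pointwise bound $\eta\leq \omega_\varepsilon$ and then apply Lemma \ref{keylemma} to collapse the mixed Monge-Amp\`ere sum, yielding
\begin{equation*}
J_\eta(\varphi_\varepsilon) \geq c\varepsilon \int_M \varphi_\varepsilon \frac{\omega_{\varphi_\varepsilon}^n}{n!}
\end{equation*}
for some constant $c>0$ independent of $\varepsilon$, where the right-hand side is nonpositive.

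Combining this with Lemma \ref{xiajie} and the uniform upper bound $E_{\omega_\varepsilon}(\varphi_\varepsilon) \leq C$ gives
\begin{equation*}
\left(\frac{\alpha}{n!} - c\varepsilon\right)\left(-\int_M \varphi_\varepsilon \frac{\omega_{\varphi_\varepsilon}^n}{n!}\right) \leq C.
\end{equation*}
For $\varepsilon$ sufficiently small, the bracketed coefficient is bounded below by $\alpha/(2n!)>0$, so $-\int_M \varphi_\varepsilon\, \omega_{\varphi_\varepsilon}^n$ is uniformly bounded. Plugging this back into the decomposition of $E_{\omega_\varepsilon}(\varphi_\varepsilon)$, the bound on $E_{\omega_\varepsilon}(\varphi_\varepsilon)$ together with the lower bound on $J_\eta(\varphi_\varepsilon)$ forces the entropy to be uniformly bounded from above, which in turn controls $\int_M F_\varepsilon\, \omega_0^n/n!$ by Jensen's inequality applied to $\int_M e^{F_\varepsilon} \omega_0^n = [\omega_\varepsilon]^n$.

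The main obstacle is the $J_\eta$ lower bound. The subtle point is that bigness of $-c_1(M)$ is used in an essential way: it is what forces $\underline{\eta}/n$ to approach $1$ at rate exactly $O(\varepsilon)$, so the coefficient $c\varepsilon$ in front of $-\int_M\varphi_\varepsilon \omega_{\varphi_\varepsilon}^n$ can be made small compared to $\alpha/n!$, allowing the absorption argument to close. Without bigness one would only have $\underline{\eta}/n \to \mathrm{const} \neq 1$, and the harmful term in $J_\eta$ would not be suppressed, so the whole strategy would break down.
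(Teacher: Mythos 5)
Your proposal is correct and reproduces all the essential ingredients of the paper's argument: the minimizing property $E_{\omega_\varepsilon}(\varphi_\varepsilon)\leq E_{\omega_\varepsilon}(0)\leq C$ from Lemma \ref{firstlemma}, the $\alpha$-invariant entropy bound of Lemma \ref{xiajie}, the two $J$-functional inequalities of Lemmas \ref{keylemma} and \ref{zhongyao}, and the key fact $\underline{\eta}/n = 1 + O(\varepsilon)$ coming from bigness of $[\eta]$, so the argument is genuinely sound. The packaging differs slightly. You isolate a clean intermediate bound $J_\eta(\varphi_\varepsilon)\geq c\varepsilon\int_M\varphi_\varepsilon\,\omega_{\varphi_\varepsilon}^n/n!$ by discarding the $\omega_0$-piece, invoking the pointwise $\eta\leq\omega_\varepsilon$, and applying Lemma \ref{keylemma}; you then run a two-step absorption, first extracting a uniform bound on $-\int\varphi_\varepsilon\omega_{\varphi_\varepsilon}^n$ and then feeding it back to bound the entropy. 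The paper avoids this back-substitution by splitting the entropy as $\tfrac12+\tfrac12$, applying Lemma \ref{xiajie} to only half, and collapsing the remaining $\alpha$-term and $J_\eta$-terms into a single wedge factor $\tfrac{\alpha}{2n}\omega_\varepsilon-\eta+\tfrac{1}{n}\underline{\eta}\,\omega_\varepsilon$, whose positivity for small $\varepsilon$ (again from $\underline{\eta}/n\to 1$) closes the argument in one pass. Both routes are equivalent in content; the paper's version is tighter bookkeeping, while yours makes the absorption mechanism and the role of the $O(\varepsilon)$ smallness more explicit. Two minor remarks: your final appeal to Jensen is unnecessary, since the elementary pointwise inequality $y\leq ye^y$ already gives $\int F_\varepsilon\,\omega_0^n\leq\int F_\varepsilon e^{F_\varepsilon}\omega_0^n$; and the measure in the theorem statement is almost surely a typo for $\omega_{\varphi_\varepsilon}^n/n!$, as the quantity that all earlier estimates actually require (and that the paper's proof concludes with) is the entropy $\int F_\varepsilon e^{F_\varepsilon}\omega_0^n/n!$, so your conversion step, while harmless, is addressing a discrepancy in the stated conclusion rather than a gap in the argument.
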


\begin{proof}
Again, we will suppress $\varepsilon$ for brevity. Fix $\alpha > 0$ with $\alpha < \alpha(M + [\eta + \omega_0])$, then using Lemma \ref{xiajie} and Lemma \ref{zhongyao} we get
\begin{equation}
\begin{aligned}
    E_{\omega}(\varphi) &\geq \frac{1}{2}\int_{M} \log \frac{\omega_{\varphi}^n}{\omega_0^n} \frac{\omega_{\varphi}^n}{n!}- \frac{\alpha}{2} \int_{M} \varphi \frac{\omega_{\varphi}^n}{n!} - \frac{\underline{\eta}}{(n+1)!}\int_{M} \varphi  (\sum_{i=0}^{n}  \omega^i \wedge  \omega_{\varphi}^{n-i})\\
    &+ \frac{1}{n!}\int_{M} \varphi \eta \wedge(\sum_{i=0}^{n-1} \omega^i \wedge \omega_{\varphi}^{n-1-i})- C\\
    & \geq \frac{1}{2}\int_{M} \log \frac{\omega_{\varphi}^n}{\omega_0^n} \frac{\omega_{\varphi}^n}{n!}  - \frac{1}{n!}\int_{M} \varphi(\frac{\alpha}{2n} \omega -\eta + \frac{1}{n}\underline{\eta} \omega) \wedge(\sum_{i= 0}^{n-1} \omega^{i} \wedge \omega_{\varphi}^{n-1-i}) - C
\end{aligned}
\end{equation}
where for the first inequality we used Lemma \ref{zhongyao}, and the second inequality we used Lemma \ref{keylemma} again. Notice that
\begin{equation}
\begin{aligned}
     \frac{\alpha}{2n} \omega -\eta + \frac{ 1}{n} \underline{\eta} \omega = ( \frac{\alpha}{2n} - 1 + \frac{1}{n}\underline{\eta})\eta +  \varepsilon (\frac{\alpha}{2n} + \frac{1}{n}\underline{\eta})  \omega_0 > 0
\end{aligned}
\end{equation}
for $\varepsilon > 0$ small enough because $\underline{\eta} = n \frac{ [\eta] \cdot [\omega_{\varepsilon}]^{n-1}
}{[\omega_{\varepsilon}]^{n}}\to n$ as $\varepsilon \to 0$. Thus for $\varepsilon$ sufficiently small, we have

\begin{equation}
\begin{aligned}
    E_{\omega}(\varphi) 
    & \geq \frac{1}{2}\int_{M} \log \frac{\omega_{\varphi}^n}{\omega_0^n} \frac{\omega_{\varphi}^n}{n!}  - \int_{M} \varphi(\frac{\alpha}{2n} - \eta + \frac{1}{n}\underline{\eta}\omega) \wedge(\sum_{i= 0}^{n-1} \omega^{i} \wedge \omega_{\varphi}^{n-1-i}) - C\\
    &\geq \frac{1}{2}\int_{M} \log \frac{\omega_{\varphi}^n}{\omega_0^n} \frac{\omega_{\varphi}^n}{n!}  - C,
\end{aligned}
\end{equation}
since we assume that $\varphi \leq 0$. We conclude the proof by recalling that $E_{\omega} (\varphi) \leq E_{\omega}(0) \leq C$ according to Lemma \ref{firstlemma}.
\end{proof}

\section{Convergence} \label{lastsection}
Now we use the estimates developed in previous sections to show that away from $E$, $\omega_{\varphi_{\varepsilon}}$ converges to the singular K\"{a}hler Einstein metric $\omega_{\text{KE}}$ on $M \backslash E$ given by Theorem \ref{singular}. Fix a connected compact subset $K \subset M$,  and a subsequence $\varepsilon_{i}$ such that $\varphi_{\varepsilon_{i}}$ converges smoothly to $\varphi_{\infty}$, and $F_{\varepsilon_{i}} $ converges smoothly to $F_{\infty}$ (such a subsequence always exists by Arzela-Ascoli). Recall that our equations read
\begin{equation}
    \begin{aligned}
    & \frac{(\omega_{\varepsilon_{i}} +\sqrt{-1}\partial \bar \partial  \varphi_{\varepsilon_{i}})^n}{\omega_0^n} = e^{F_{\varepsilon_{i}}},\\
    &\Delta_{\omega_{\varphi_{\varepsilon_{i}}}} F_{\epsilon_{i}}= -\text{tr}_{\omega_{\varphi_{\varepsilon_{i}}}}\eta - \underline{R_{\varepsilon_{i}}}.
    \end{aligned}
\end{equation}
Let us compute 
\begin{equation}
    \begin{aligned}
     \int_{K} |\partial(F - \varphi)|^2_{\omega_{\varphi}} \omega_{\varphi}^{n} &\leq  \int_{M} |\partial(F - \varphi)|^2_{\omega_{\varphi}} \omega_{\varphi}^{n}\\ 
    &= - \int_{M} (F - \varphi) \Delta_{\omega_{\varphi}} (F - \varphi) \omega_{\varphi}^{n} \\
    &= (\underline{R} + n) \int_{M} (F - \varphi) \omega_{\varphi}^n - \varepsilon \int_{M} (F - \varphi) \text{tr}_{\omega_{\varphi}} \omega_0 \omega_{\varphi}^n  \\
    &\leq C(\underline{R} + n) \int_{M} \omega_{\varphi}^n + C \varepsilon n \int_{M}  \omega_{\varphi}^{n-1} \wedge \omega_{0} \to 0
    \end{aligned}
\end{equation}
where for the last line we used the $C^{0}$ estimates on $F$ and $\varphi$. So $\partial F_{\infty} = \partial \varphi_{\infty}$ on $K$ which implies that $F_{\infty} =  \varphi_{\infty} + C$ on $K$. Furthermore,
\begin{equation}
    \text{Ric}(\omega_{\varphi_{\infty}}) = \mathrm{Ric}(\omega_0) - \sqrt{-1} \partial \bar \partial F_{\infty}  = -\eta - \sqrt{-1} \partial \bar \partial \varphi_{\infty} = - \omega_{\varphi_{\infty}}
\end{equation}
on $K$ and thus on all of $M \backslash E$. By Theorem \ref{singular}, the uniqueness of such singular K\"{a}hler Einstein metric, all convergent subsequences have to converge to the same limit. If for each sequence we have a further subsequence converging to the same limit, then the sequence itself has to converge.

\section*{Acknowledgement}
The author is grateful to his thesis advisor Ben Weinkove for his continued support, encouragement, countless discussions around these results as well as many valuable comments on the manuscript. The author would also like to thank Jian Song for pointing out an error in the original proof of Theorem 2.5. 
\medskip
\printbibliography

\end{document}